\documentclass[11pt]{amsart}
\usepackage{amsmath,amssymb,amsfonts,mathtools}
\allowdisplaybreaks[1]
\usepackage[margin=1.05in]{geometry}
\usepackage[colorlinks=true,linkcolor=blue,citecolor=blue,urlcolor=blue]{hyperref}
\usepackage[capitalize]{cleveref}

\allowdisplaybreaks

\newtheorem{theorem}{Theorem}[section]
\newtheorem{proposition}[theorem]{Proposition}
\newtheorem{lemma}[theorem]{Lemma}
\newtheorem{corollary}[theorem]{Corollary}
\theoremstyle{definition}

\theoremstyle{remark}
\newtheorem{remark}[theorem]{Remark}

\newcommand{\R}{\mathbb R}
\newcommand{\bfK}{\mathbf K}
\newcommand{\one}{\mathbf 1}
\newcommand{\supp}{\operatorname{supp}}
\newcommand{\dist}{\operatorname{dist}}
\newcommand{\diam}{\operatorname{diam}}
\newcommand{\ess}{\mathrm{ess}}
\newcommand{\HS}{\mathrm{HS}}

\newcommand{\ip}[2]{\langle #1,#2\rangle}

\title[Compact two-weight Riesz transforms on curves]
{Compactness and essential norms for two-weight\\
fractional Riesz transforms on curves}
\author{Ji Li}
\address{School of Mathematical and Physical Sciences, Macquarie University,
NSW 2109, Australia}
\email{ji.li@mq.edu.au}
\author{Chun-Yen Shen}
\address{Department of Mathematics, National Taiwan University,
Taipei 106, Taiwan}
\email{cyshen@math.ntu.edu.tw}
\author{Chaojie Wen}
\address{School of Mathematical and Physical Sciences, Macquarie University,
NSW 2109, Australia}
\email{chaojie.wen@students.mq.edu.au}
\author{Brett D. Wick}
\address{Department of Mathematics, Washington University in St. Louis,
One Brookings Drive, St. Louis, Missouri 63130-4899, USA}
\email{wick@math.wustl.edu}
\author{Liangchuan Wu}
\address{School of Mathematical Sciences, Anhui University,
Hefei 230601, China}
\email{wuliangchuan@ahu.edu.cn}
\date{}
\subjclass[2020]{Primary 42B20; Secondary 47B07, 47B10}
\keywords{two-weight inequalities, compact operators, essential norm,
fractional Riesz transforms, Dini curves, Green gradients}

\begin{document}
\raggedbottom
\begin{abstract}
We prove a compact two-weight $T1$ theorem for the $\alpha$-fractional Riesz vector, $0\leq\alpha<n$, when one measure is supported on a compact regular $C^{1,\mathrm{Dini}}$ curve $\Gamma$. The measures may have common point masses. Under the global boundedness assumptions, we give a two-sided estimate for the essential norm in terms of Muckenhoupt and testing constants computed after restricting both measures to small cubes. Compactness is equivalent to the uniform vanishing of these local bounds as the cube side length tends to zero. The smooth truncations have a unique weak operator limit and converge in operator norm in the compact case.

For arclength measure and distance power weights, we determine the compactness ranges for Green gradients on bounded tubes and for Newtonian gradients and Biot--Savart fields on the whole space. The whole-space operators in these ranges are compact but are not Hilbert--Schmidt. When $\alpha=n-1$ and both measures have positive continuous arclength densities $a,b$ on the same closed curve, we obtain the exact essential norm $\pi c_{n-1,n}\max_\Gamma\sqrt{ab}$.
\end{abstract}
\maketitle

\section{Introduction}

Two-weight inequalities ask when a singular integral maps $L^2(\sigma)$ boundedly into $L^2(\omega)$ for a pair
of positive Borel measures on which no structural assumptions are placed. The measures need not be doubling or
absolutely continuous, they may carry point masses, and they need not be related to each other in any way. In this
generality the tools of classical weighted theory are unavailable, and the aim is to characterize boundedness by a
finite list of conditions that test the operator and the measures on cubes. For the Hilbert transform on the line,
Lacey, Sawyer, Shen and Uriarte-Tuero \cite{HilbertI} and Lacey \cite{HilbertII} characterized two-weight $L^2$
boundedness by a Poisson $A_2$ condition and forward and dual testing for measures without common atoms, and
Hyt\"onen \cite{HilbertGeneral} obtained a characterization for general positive Radon measures, including common
atoms. See also the closely related results in \cite{SawyerMaximal}, \cite{SawyerFractional}, \cite{FailureEnergy},
\cite{EnergyCommon}, \cite{FGW} and \cite{GoodLambda}. In higher dimensions, the companion paper \cite[Theorem~1.3]{BoundedDini}
characterizes boundedness of the fractional Riesz vector when one measure is supported on a compact regular
$C^{1,\mathrm{Dini}}$ curve, by two one-tailed Muckenhoupt conditions, two punctured conditions, and forward and dual
testing over rotated cubes. It extends the $C^{1,\delta}$-curve theorem of Sawyer, Shen and Uriarte-Tuero
\cite[Theorem~9]{CurvePaper}.

Once boundedness is understood, the next question is compactness, and its quantitative form is the essential norm,
the distance from the operator to the compact operators. The essential norm measures exactly the part of the operator
that no finite-rank approximation can remove. In the present setting it also governs two concrete questions.
First, the operator is defined only as a limit of smooth truncations, and one wants to know when that limit is
attained in operator norm rather than merely weakly. Second, when the curve carries a charge or a current and the
operator is the gradient of the resulting potential or field, compactness is precisely the statement that weakly
convergent source densities produce strongly convergent fields with convergent energies.

This paper answers these questions for the operators and measures of \cite{BoundedDini}. We give a two-sided
estimate for the essential norm in terms of the boundedness characterization computed after restricting both
measures to small cubes, characterize compactness, and prove norm convergence of the truncations in the compact
case. For positive continuous densities on a closed curve we obtain the essential norm exactly, and for distance
power weights we determine the compactness ranges of Green gradients, Newtonian gradients and Biot--Savart fields.

More precisely, for $0\leq\alpha<n$ and $d=n-\alpha$ the fractional Riesz vector has kernel
$$
 \bfK^{\alpha,n}(x,y)=c_{\alpha,n}\frac{y-x}{|y-x|^{d+1}},
 \qquad x\ne y,\qquad c_{\alpha,n}>0.
$$
We study compactness from $L^2(\sigma)$ to $L^2(\omega;\mathbb C^n)$ when one measure is supported on a compact
regular $C^{1,\mathrm{Dini}}$ curve $\Gamma$. The measures are locally finite and positive, and may have common atoms.
We first assume $\supp\sigma\subset\Gamma$, and the case $\supp\omega\subset\Gamma$ follows by componentwise duality.

\subsection{Compactness in the two-weight setting}

Compact $T1$ theorems are known in other settings. Lacey, Sawyer, Shen, Uriarte-Tuero and Wick
\cite[Theorem~1.9]{Cauchy} give conditions for compactness of the Cauchy transform from the circle to the disc using
vanishing Poisson and testing conditions.
Villarroya \cite{Villarroya2015} proved a compact $T1$ theorem on unweighted $L^p(\R)$, $1<p<\infty$, and his
compact local $T1$ theorem \cite[Theorem~4.2]{Villarroya} applies to a single measure of positive power growth.
Mitkovski and Stockdale \cite{MitkovskiStockdale} characterize compact Calder\'on--Zygmund operators on unweighted
$L^2(\R^n)$ by weak compactness and $T1,T^*1\in\mathrm{CMO}$. In each of these results the measure is either
Lebesgue measure or a single measure with a growth condition, and the operator is given in advance.
Here the two measures are arbitrary, the curve-supported measure may be singular or atomic, the two measures may
share point masses, and the other measure need not have compact support.

Three features of this setting distinguish it from the results above and shape both the statement and the proof.

First, there is no canonical operator to begin with. When $\sigma$ is singular or atomic there is no principal
value to which the kernel can be assigned, and the operator exists only as a weak operator limit of smooth
truncations along a subsequence. That this limit is independent of the subsequence is itself a result,
\cref{prop:unique}, and we prove it without any compactness assumption: two weak limits agree off the diagonal, so
their difference is a multiplication operator, and a symmetric change of density shows that the multiplier is real
and skew-adjoint, hence zero.

Second, common atoms are a genuine obstruction. A point mass carried by both measures contributes nothing to any
truncated operator, since every cutoff vanishes on the diagonal. The same vanishing prevents the truncations from
converging in norm to any operator with a nonzero atomic diagonal, even a compact one, and it breaks the standard
lower bound for the essential norm, because projections onto shrinking cubes around a common atom do not tend
strongly to zero.

Third, and most important for the form of the result, the usual vanishing testing conditions are not the right
criterion. In the unweighted theory, the $\mathrm{CMO}$ conditions require the testing quantities to vanish at
small scales, at large scales and at infinity. In the two-weight setting, the large-scale conditions fail for
compact operators. For two point masses at distinct points the operator has rank one, yet the normalized testing quantities over
every cube containing both points are bounded below, independently of the side length, see
Section~\ref{sec:examples}. A criterion built from vanishing testing conditions over large cubes would therefore exclude
compact operators.

The resolution is to localize differently. Fix an axis-parallel cube $D$ of small side length, restrict both
measures to $D$, and compute the full boundedness characterization of \cite{BoundedDini} for the restricted pair
$(\one_D\sigma,\one_D\omega)$, recomputing the common atoms and the punctured masses after restriction. The
essential norm is comparable to the supremum of these local characterizations as the side length of $D$ tends to
zero. No condition at large scales or at infinity is needed: because $\sigma$ has compact support, boundedness
alone forces the kernel away from the diagonal to be Hilbert--Schmidt, even when $\omega$ has unbounded support,
and every large-scale contribution is compact. Common atoms are handled by the restriction itself, since an atom
is tested only against the mass of the small cube that contains it.

\subsection{The main theorem}

Fix $\eta_0\in C^\infty([0,\infty))$ such that
$\eta_0=0\text{ on }[0,1/2]$, $\eta_0=1$ on $[1,\infty)$ and $
 0\leq\eta_0\leq1$.
For $0<\varepsilon<R$, define
$
 \chi_{\varepsilon,R}(z)=\eta_0(|z|/\varepsilon)\eta_0(R/|z|) $ for $z\ne0$
  and $
 \chi_{\varepsilon,R}(0)=0,
$
and define the smooth truncations
$$
 T_{\varepsilon,R,\mu}f(x)
 =\int_{\R^n}\chi_{\varepsilon,R}(x-y)\bfK^{\alpha,n}(x,y)f(y)\,d\mu(y)
$$
with integrand zero at $x=y$. Under a uniform norm bound, separability of the two $L^2$ spaces yields
a bounded operator $T$ and strictly increasing integers $j_k$ such that
$$
 T_{2^{-j_k},2^{j_k},\sigma}\to T\quad\text{in the weak operator topology},
 \qquad j_k\to\infty.
$$
Throughout the paper, $T$ denotes an operator obtained in this way.

For a pair $(\mu,\nu)$, let
$$
 \Theta(\mu,\nu)=\sqrt{\mathfrak M_2^\alpha(\mu,\nu)}
 +\mathfrak T(\mu,\nu)+\mathfrak T^{\mathrm{dual}}(\mu,\nu).
$$
Here $\mathfrak M_2^\alpha(\mu,\nu)$ is the sum of the four Muckenhoupt constants, and $\mathfrak T$ and
$\mathfrak T^{\mathrm{dual}}$ are the smooth testing constants, defined in Section~\ref{sec:definitions}.
The one-tailed integrals are taken outside the testing cube. For each measure, we subtract the largest mass at a common atom
in the cube from its total mass on the cube. All six constants use half-open testing cubes $Q$ of arbitrary orientation.
Their edges need not be parallel to the coordinate axes. By \cite[Theorem~1.3]{BoundedDini}, the finiteness of
$\Theta(\sigma,\omega)$ is equivalent to the uniform boundedness of the smooth truncations.

Let $D$ be an axis-parallel half-open cube. We restrict both measures to $D$.
We compute $\Theta(\one_D\sigma,\one_D\omega)$ using all testing cubes $Q$ above. Define
\begin{equation}\label{eq:local-data}
 \mathfrak V(r)=\sup_{\substack{D\text{ axis-parallel}\\0<\ell(D)\leq r}}\Theta( \one_D\sigma , \one_D\omega ),
 \qquad \mathfrak V_0=\lim_{r\to0^+}\mathfrak V(r).
\end{equation}
The limit exists by monotonicity.
Common atoms and punctured masses are computed for the restricted pair.
For each testing cube $Q$, both integrals in the testing conditions are taken over
$D\cap Q$, while the one-tailed integrals are taken over $D\setminus Q$.
The essential norm is $$\|T\|_{\ess}=\inf_{K\text{ compact}}\|T-K\|,$$
where $K$ ranges over all compact operators from $L^2(\sigma)$
to $L^2(\omega;\mathbb C^n)$.

\begin{theorem}\label{thm:main}
Let $0\leq\alpha<n$. Suppose that $\sigma,\omega$ are locally finite positive Borel measures, $\sigma$ is supported
on a compact regular $C^{1,\mathrm{Dini}}$ curve $\Gamma$, and $\Theta(\sigma,\omega)<\infty$.
Common atoms are allowed. As $\varepsilon\to0^+$ and $R\to\infty$ independently, the smooth truncations converge
in the weak operator topology to a unique bounded operator $T$, and
\begin{equation}\label{eq:main-essential}
 \|T\|_{\ess}\approx\mathfrak V_0.
\end{equation}
The comparison constants depend only on $n,\alpha,\eta_0$ and the fixed geometric parameters of $\Gamma$
in the boundedness theorem. They are independent of the measures.

Consequently, $T$ is compact if and only if $\mathfrak V(r)\to0$ as $r\to0^+$. In this case,
$$
 \|T_{\varepsilon,R,\sigma}-T\|\to0
 \quad\text{as  }\varepsilon\to0^+,\ R\to\infty,
$$
with the two parameters varying independently.
The same conclusions hold when $\omega$ is the curve-supported measure.
\end{theorem}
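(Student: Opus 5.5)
We split the proof into four parts: uniqueness of the weak limit, the upper bound $\|T\|_{\ess}\lesssim\mathfrak V_0$, the lower bound $\mathfrak V_0\lesssim\|T\|_{\ess}$, and norm convergence of the truncations in the compact case.

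\emph{Uniqueness of the weak limit.} By \cite[Theorem~1.3]{BoundedDini}, $\Theta(\sigma,\omega)<\infty$ gives $\sup_{\varepsilon,R}\|T_{\varepsilon,R,\sigma}\|\lesssim\Theta(\sigma,\omega)$, so weak subsequential limits exist. Let $T,T'$ be two such limits.
\begin{itemize}
\item For $f,g$ with separated supports, $\langle T_{\varepsilon,R,\sigma}f,g\rangle_\omega$ stabilizes once $\varepsilon$ is small and $R$ is large. Hence $T-T'$ is supported on the diagonal, and testing on products of indicators of small cubes shows it is a multiplication operator $f\mapsto mf$ with $m$ concentrated where $\sigma$ and $\omega$ overlap.
\item The kernel is antisymmetric and real. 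Applying the same limit to the pair with densities exchanged and comparing $\langle Tf,g\rangle$ with $\langle f,T^*g\rangle$ forces $m$ to be both real and skew-adjoint, so $m=0$.
\end{itemize}
This argument gives independence of the subsequence, and so convergence along the full net with $\varepsilon\to0$ and $R\to\infty$ independently.

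\emph{Upper bound.} Fix $r>0$ and tile $\R^n$ by a dyadic grid of side $r$. Since $\Gamma$ is compact, only finitely many tiles $D$ meet $\supp\sigma$. Write
$$T=\sum_D \one_D T\one_D+T_{\mathrm{off}}.$$
\begin{itemize}
\item For the off-diagonal part, first average over random shifts of the grid (or use a smooth partition of unity) so that pairs $(x,y)$ in distinct tiles with $|x-y|$ small are only a controlled, thin boundary region. The remaining far part has kernel bounded by $r^{-d}$ on $\supp\sigma\times\R^n$. Boundedness, through the one-tailed Muckenhoupt conditions, should give $\int\!\!\int_{|x-y|>r}|x-y|^{-2d}\,d\sigma\,d\omega<\infty$. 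Then this part is Hilbert--Schmidt and hence compact, even when $\omega$ has unbounded support.
\item The boundary layers between neighbouring tiles are handled by enlarging each $D$ to $3D$. Each $3D$ restriction has norm $\lesssim\Theta(\one_{3D}\sigma,\one_{3D}\omega)\le\mathfrak V(3r)$, again by the boundedness theorem applied to the restricted pair.
\item The diagonal blocks have bounded overlap, so $\|\sum_D\one_DT\one_D\|\lesssim\sup_D\|\one_DT\one_D\|\lesssim\mathfrak V(3r)$.
\end{itemize}
A delicate point is that the restriction $\one_D T\one_D$ is the weak limit of truncations for the restricted pair. This identification requires the uniqueness from the first part, so that common atoms are treated consistently. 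Letting $r\to0$ gives $\|T\|_{\ess}\lesssim\mathfrak V_0$.

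\emph{Lower bound.} This is the step I expect to be the main obstacle. We must show each constituent of $\Theta(\one_D\sigma,\one_D\omega)$, for small $D$, is $\lesssim\|T-K\|$ for every compact $K$.
\begin{itemize}
\item \emph{Testing constants.} The test function is $\one_{Q\cap D}$, normalized in $L^2(\sigma)$. As $\ell(D)\to0$ this family is weakly null provided the cube avoids atoms of $\sigma$. Then $\|K\one_{Q\cap D}\|\to0$, so $\|T\one_{Q\cap D}\|\lesssim\|T-K\|+o(1)$.
\item \emph{Atoms.} This is precisely why the definitions subtract the largest common atomic mass. After subtraction, the relevant functions are supported off the atom and are again weakly null.
\item \emph{Muckenhoupt constants.} The standard lower-bound argument pairs $\one_Q$ against tail weights on $D\setminus Q$. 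Because both supports shrink into $D$, the pairing is again with weakly null sequences.
\end{itemize}
I would first try a contradiction argument: suppose there are cubes $D_k$ with $\ell(D_k)\to0$ and local constants $\ge c>\|T-K\|$. Pass to normalized witnesses, extract weakly null subsequences after removing finitely many atoms, and run the necessity half of \cite{BoundedDini} for $T-K$ localized to $D_k$. This yields $\Theta(\one_{D_k}\sigma,\one_{D_k}\omega)\lesssim\|T-K\|+o(1)$, a contradiction. The main care needed is that the necessity proof in \cite{BoundedDini} uses only operator-norm information on the localized pieces.

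\emph{Norm convergence in the compact case.} The compactness criterion follows immediately from $\|T\|_{\ess}\approx\mathfrak V_0$. For the convergence of truncations, split
$$T-T_{\varepsilon,R,\sigma}=\bigl(T-T_{\varepsilon,\infty}\bigr)+\bigl(T_{\varepsilon,\infty}-T_{\varepsilon,R}\bigr).$$
\begin{itemize}
\item The large-$R$ tail is dominated by the Hilbert--Schmidt norm of the kernel on $|x-y|>R/2$, which tends to $0$.
\item The small-$\varepsilon$ part is localized to pairs at distance $<\varepsilon$. The upper-bound decomposition with $r\sim\varepsilon$ bounds it by $\mathfrak V(C\varepsilon)$ plus a Hilbert--Schmidt remainder that vanishes as $\varepsilon\to0$.
\item In the compact case $\mathfrak V(C\varepsilon)\to0$. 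No diagonal atomic part survives, since compactness together with the lower bound forces the common-atom contributions to vanish.
\end{itemize}

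\emph{The case $\supp\omega\subset\Gamma$.} This follows by applying the result to the adjoint. The adjoint is componentwise the Riesz transform with roles swapped, up to sign, and the quantities in $\Theta$ are symmetric under this exchange.
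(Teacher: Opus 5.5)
The gap is the lower bound $\mathfrak V_0\lesssim\|T\|_{\ess}$. Your upper bound is sound, and simpler than the paper's commutator argument. The one-tailed constant $\mathcal A_2^{\alpha,*}$ on a cube containing $\Gamma$ already gives the off-diagonal Hilbert--Schmidt bound. The neighbour blocks at scale $r$ are each a compression of some $T_{\tilde D}$ with $\ell(\tilde D)=3r$, so orthogonality gives $\|T\|_{\ess}\le 3^n\sup_{\ell(\tilde D)\le 3r}\|T_{\tilde D}\|$.

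The problem with your lower bound is that the testing constants in $\Theta(\one_D\sigma,\one_D\omega)$ are suprema over $0<\varepsilon<R$ of $\|\one_E T_{\varepsilon,R,\sigma}\one_E\|$. The necessity half of the boundedness theorem controls $\Theta(\one_D\sigma,\one_D\omega)$ only by $\mathfrak N_s(\one_D\sigma,\one_D\omega)$. Weak convergence gives $\|T_D\|\le\mathfrak N_s(\one_D\sigma,\one_D\omega)$, which is the wrong direction. So bounding $\|T\one_{Q\cap D}\|$ does not bound $\Theta$, and running the necessity proof ``for $T-K$'' has no meaning, because that proof uses the Riesz kernel.

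What you need is a transference estimate $\mathfrak N_s(\one_D\sigma,\one_D\omega)\le C_\eta\|T_D\|$, uniform in $D$ and in the truncation parameters. The paper writes $T_{\varepsilon,R,\sigma}=\mathfrak S_{\chi_{\varepsilon,R}}(T)$, where $\mathfrak S_h(S)=(2\pi)^{-n}\int\widehat h(\xi)U_\xi^\omega SU_{-\xi}^\sigma\,d\xi$ commutes with restriction to $D$. It then factors $\chi_{\varepsilon,R}(z)=(1-a(z/\varepsilon))b(z/R)$, so that dilation invariance of $\|\widehat h\|_1$ gives a bound independent of $\varepsilon$ and $R$. This reduces the lower bound to the operator statement $\limsup_{\ell(D)\to0}\|T_D\|\le\|T\|_{\ess}$, with no test functions at all.

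Your treatment of atoms is also misdirected. The testing constants do not subtract atomic masses; only $A_2^{\alpha}$ and $A_2^{\alpha,*}$ do. So $\one_{Q\cap D}/\sqrt{\sigma(Q\cap D)}$ is not weakly null when $D$ shrinks to an atom of $\sigma$. At the operator level, $\|\one_D^\omega K\one_D^\sigma\|$ need not tend to zero when $D$ shrinks to a common atom $p$ with $\one_{\{p\}}^\omega K\one_{\{p\}}^\sigma\neq0$. Hence the estimate $\|T_D\|\le\|T-K\|+\|\one_D^\omega K\one_D^\sigma\|$ fails there.

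What makes removing $p$ harmless is a property of $T$ itself. Every truncation vanishes at $x=y$, so $\one_{\{p\}}^\omega T\one_{\{p\}}^\sigma=0$, while $T\one_{\{p\}}^\sigma$ and $T^*\one_{\{p\}}^\omega$ have finite rank. Take nearly extremal cubes $D_j$ converging to $p$, and let $A_j=\one_{D_j\setminus\{p\}}^\omega$ and $B_j=\one_{D_j\setminus\{p\}}^\sigma$. These projections tend strongly to zero, so $\|T_{D_j}-A_jTB_j\|\to0$ and $\|A_jKB_j\|\le\|A_jK\|\to0$. This gives $\limsup_j\|T_{D_j}\|\le\|T-K\|$.

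Your norm-convergence argument uses blocks at scale $\varepsilon$, bounded by $\mathfrak V(3\varepsilon)$ through the boundedness theorem for restricted pairs. It is a valid and more direct alternative to the paper's removal of the atomic diagonal of compact approximants. However, it needs $\mathfrak V(r)\to0$ from compactness of $T$, so it inherits the gap above.
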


The condition $\mathfrak V(r)\to0$ as $r\to0^+$ uses only restrictions to small cubes, and the comparison
\eqref{eq:main-essential} is two-sided. The lower bound $\mathfrak V_0\lesssim\|T\|_{\ess}$ holds for an
arbitrary compactly supported $\sigma$. The curve hypothesis enters only in the upper bound, through the
boundedness theorem of \cite{BoundedDini}.

\subsection*{Outline of the proof}

The proof has two independent halves: an exact localization of the essential norm that uses nothing about the
curve, and a comparison of the localized norms with the testing constants that uses the boundedness theorem.

For the first half, let $T_D=\one_DT\one_D$ and
$$
 m_T(r)=\sup_{\substack{D\text{ axis-parallel}\\0<\ell(D)\leq r}}\|T_D\|,
 \qquad m_T(0)=\lim_{r\to0^+}m_T(r).
$$
When the source measure has compact support, we prove the exact identity
$$
 \|T\|_{\ess}=m_T(0).
$$
For classical local principles, see Simonenko~\cite[Section~5, Lemma~2.5]{Simonenko}.
When $\sigma\ne0$, boundedness implies $\int_{|x|>A}|x|^{-2d}\,d\omega(x)<\infty$ for sufficiently large $A$,
so the kernel away from the diagonal is Hilbert--Schmidt even when $\omega$ has unbounded support.
Decomposing into neighboring cubes, we prove that commutators with continuous multipliers are compact, and a finite
partition whose squares sum to one on $\supp\sigma$ yields $\|T\|_{\ess}\leq m_T(r)$.
For the reverse inequality, common atoms prevent a direct weak-convergence argument. Since $T$ vanishes on the
atomic diagonal and the mixed terms have fixed finite-rank factors, removing the limiting atom $p$ from cubes
$D_j$ shrinking to $p$ changes $T_{D_j}$ by $o(1)$ in norm, and the projections with $p$ removed do tend strongly
to zero. Comparing with an arbitrary compact operator proves $m_T(0)\leq\|T\|_{\ess}$.

For the second half, weak convergence bounds $\|T_D\|$ by the supremum of the truncation norms, and the Fourier
representation of the cutoff recovers each smooth truncation from $T_D$ with a Fourier $L^1$ bound independent of
both truncation radii:
$$
 \|T_D\|\leq\sup_{0<\varepsilon<R}
       \|T_{\varepsilon,R, \one_D\sigma }\|_{L^2( \one_D\sigma )\to L^2( \one_D\omega ;\mathbb C^n)}
 \leq C_\eta\|T_D\|.
$$
By the companion boundedness theorem, the middle quantity is comparable to $\Theta( \one_D\sigma , \one_D\omega )$
uniformly in $D$. Together with $\|T\|_{\ess}=m_T(0)$, this proves \eqref{eq:main-essential}.

Finally, since the cutoffs vanish on the common-atom diagonal, they cannot converge in norm to a compact operator
with a nonzero atomic diagonal. Removing this diagonal from compact approximants to $T$ proves norm convergence of
the smooth truncations when $T$ is compact.

\subsection{An exact essential norm}

Two-weight theory rarely yields more than two-sided estimates with unspecified constants. In the principal case
$\alpha=n-1$, when both measures are positive continuous multiples of arclength on the same closed curve, we obtain
the essential norm exactly, and the formula identifies it as a purely local, tangential quantity.
Let $ds_\Gamma$ denote arclength. For $\alpha=n-1$, define the Euclidean truncation by
$$
 T^\sharp_\varepsilon f(x)
 =c_{n-1,n}\int_{\substack{y\in\Gamma\\|x-y|>\varepsilon}}
       \frac{y-x}{|y-x|^2}f(y)\,d\sigma(y).
$$

\begin{theorem}\label{thm:tangent-exact}
Let $n\geq2$, $\alpha=n-1$, and let $\Gamma$ be a closed regular $C^{1,\mathrm{Dini}}$ curve.
Suppose $d\sigma=a\,ds_\Gamma$ and $d\omega=b\,ds_\Gamma$, where $a,b$ are positive continuous functions on $\Gamma$.
The operators $T^\sharp_\varepsilon$ converge strongly from $L^2(\sigma)$ to $L^2(\omega;\mathbb C^n)$ as
$\varepsilon\to0^+$. Their limit $T$ is the almost-everywhere Euclidean principal value and is also the strong limit
of $T_{\varepsilon,R,\sigma}$ as $\varepsilon\to0^+$ and $R\to\infty$. Moreover,
\begin{equation}\label{eq:tangent-exact}
 \|T\|_{\ess}=\pi c_{n-1,n}\max_{x\in\Gamma}\sqrt{a(x)b(x)}.
\end{equation}
\end{theorem}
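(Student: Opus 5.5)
We prove \cref{thm:tangent-exact} in three stages: existence of the limit operator, a localization that reduces the essential norm to tangent lines, and an exact computation on a line.

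\textbf{Existence of $T$.} For $\alpha=n-1$ the kernel is $c_{n-1,n}(y-x)/|y-x|^2$, which restricted to $\Gamma$ is a Cauchy-type kernel. Parametrize $\Gamma$ by arclength $\gamma$. Near the diagonal the kernel equals
\[
\frac{\gamma'(s)}{t-s}+E(s,t),\qquad |E(s,t)|\lesssim\frac{\theta(|t-s|)}{|t-s|},
\]
where $\theta$ is the Dini modulus of $\gamma'$. The error $E$ therefore defines an operator dominated by a Dini-integrable convolution, so it is bounded and in fact compact on $L^2(ds)$; compactness follows from approximating $E$ by bounded kernels with an $L^1$ tail controlled by $\int_0^\delta\theta(t)\,dt/t$. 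The main part is $\gamma'$ times the periodic Hilbert transform. Its Euclidean truncations converge strongly and almost everywhere, because the difference between Euclidean and parametric truncations is controlled by $\theta$. Since $a,b$ are bounded above and below, $L^2(a\,ds)$ and $L^2(b\,ds)$ are equivalent to $L^2(ds)$, so strong convergence transfers. The smooth truncations $T_{\varepsilon,R,\sigma}$ differ from $T^\sharp_\varepsilon$ by a smooth averaging of truncations, and for $R>\diam\Gamma$ the $R$-cutoff is inactive. Hence they converge strongly to the same limit, which identifies $T$ with the weak limit of \cref{thm:main}.

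\textbf{Localization.} Write $T=\gamma'(s)\,\pi c_{n-1,n}H\,a+K$ as an operator from $L^2(ds)$ (via $f\mapsto fa$) to $L^2(b\,ds;\mathbb C^n)$, where $H$ is the Hilbert transform in arclength normalized by $Hf(s)=\frac1\pi\,\mathrm{p.v.}\int f(t)/(s-t)\,dt$, and $K$ is compact. Set $M=\pi c_{n-1,n}\max_\Gamma\sqrt{ab}$.

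For the upper bound $\|T\|_\ess\le M$, use the unitary maps $U_\sigma:L^2(\sigma)\to L^2(ds)$, $f\mapsto\sqrt a f$, and $U_\omega$ defined similarly with $b$. Conjugating, $U_\omega TU_\sigma^{-1}$ is the operator with kernel $\sqrt{b(s)}\,k(s,t)\sqrt{a(t)}$ on $L^2(ds)$. Modulo compacts, the commutator of $H$ with a continuous multiplier is compact, so this operator equals $\sqrt{ab}\,\gamma'\,\pi c_{n-1,n}H$ modulo compacts. Since $|\gamma'|=1$ pointwise, multiplication by the unit vector field $\gamma'$ is an isometry into $\mathbb C^n$-valued functions, and $\|H\|_{L^2(\R)\to L^2(\R)}=1$. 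Approximating $\sqrt{ab}$ by step functions, and using the exact localization $\|T\|_\ess=m_T(0)$ from the outline, gives $\|T\|_\ess\le\pi c_{n-1,n}\max\sqrt{ab}$. Alternatively, the product of two multipliers of sup norms $\|\sqrt{ab}\|_\infty$ and $1$ with $H$ gives $\|\cdot\|_\ess\le M$ directly, since the essential norm of $gH$ is at most $\|g\|_\infty\|H\|_\ess$ and $\|H\|_\ess=1$.

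\textbf{Lower bound.} Fix $x_0$ where $\sqrt{ab}$ attains its maximum, and take test functions $f_j$ concentrated in shrinking arcs around $x_0$, for example $L^2$-normalized dilates $\phi_j(t)=\lambda_j^{1/2}\phi(\lambda_j(t-s_0))$ with $\lambda_j\to\infty$ and $\|H\phi\|\approx\|\phi\|$ for $\phi$ supported in an arc. Then $f_j\to0$ weakly, so $\|Kf_j\|\to0$ for every compact $K$, and $\|Tf_j\|/\|f_j\|\to\pi c_{n-1,n}\sqrt{a(x_0)b(x_0)}\,\|H\phi\|/\|\phi\|$. The supremum of $\|H\phi\|/\|\phi\|$ over $\phi$ compactly supported in a small arc is $1$, because $H$ is dilation invariant and norm-attaining asymptotically on functions of compact support; truncate an almost-maximizer to make this precise.

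The main obstacle is verifying that the Dini error and the curvature of $\Gamma$ contribute only compact terms. In particular, the difference between Euclidean truncation and arclength truncation must be shown to converge in norm to a compact operator; this needs a Dini-type estimate uniform in $\varepsilon$.
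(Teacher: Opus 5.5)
Your proposal is correct and is essentially the paper's proof: $M_\tau\mathcal J_L$ plus a Dini-compact remainder, the comparison of Euclidean and arclength truncations on the thin annulus $\varepsilon<\delta_L(s,t)\leq\varepsilon/(1-\vartheta(2\varepsilon))$, smooth truncations written as $\eta_0'$-averages of Euclidean ones, compactness of $[\mathcal J_L,M_{\sqrt a}]$, and the upper bound $\|M_{\sqrt{ab}\,\tau}\mathcal J_L\|\leq\pi\max\sqrt{ab}$. Two refinements: the truncation comparison needs only continuity of $\tau$, not the Dini condition, and in the upper bound the circle operator must be the cotangent kernel with multiplier $i\pi\operatorname{sgn}k$, since the naively truncated kernel $1/(t-s)$ on $\mathbb T_L$ has norm $2\,\mathrm{Si}(\pi)>\pi$ and quoting the line norm is not enough.

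The only variation is the lower bound, where your $L^2$-normalized dilates $\phi_j$ concentrating at a maximum point replace the paper's modulations $\psi e^{2\pi iNs/L}$. This works equally well, e.g.\ via compactness of $[\mathcal J_L,M_{v_i}]$ for $v=\sqrt{ab}\,\tau$, the identity $\|\mathcal J_Lg\|^2=\pi^2(\|g\|^2-|\langle g,e_0\rangle|^2)$ and $\langle v_i\phi_j,e_0\rangle\to0$. No almost-maximizer is needed, because the line Hilbert transform is an isometry of $L^2(\R)$.
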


The factor $\pi$ is the norm of the Hilbert transform on $L^2$ with kernel $1/(t-s)$.
For an arclength parametrization $\Phi$, tangent $\tau=\Phi'$, and derivative modulus $\vartheta$, the local estimate
$$
 \left|\frac{\Phi(t)-\Phi(s)}{|\Phi(t)-\Phi(s)|^2}-\frac{\tau(s)}{t-s}\right|
 \leq C\frac{\vartheta(|t-s|)}{|t-s|}
$$
shows that the curve kernel is the Hilbert kernel along the tangent plus a remainder, and the Dini condition
$\int_0\vartheta(t)\,dt/t<\infty$ is exactly the integrability of this remainder near the diagonal. It is therefore
the Dini condition, rather than H\"older continuity of the tangent, that makes the remainder compact and reduces the
essential norm to that of a multiplied Hilbert transform. This reduction is also contained in
\cite[Theorem~2.4]{HMT}, with dimension $1$ and codimension $n-1$.
We prove it directly with the present normalization and compare Euclidean, arclength and smooth
truncations. Modulated functions supported near a maximum of $\sqrt{ab}$ prove the matching lower bound.

By \cref{cor:tangent-local}, the value in \eqref{eq:tangent-exact} also equals $m_T(0)$ and the largest norm among
the tangent-line operators at points of $\Gamma$. For constant densities it is independent of the length and shape
of the curve. Every continuous normal component is compact, whereas the tangential component has
the same essential norm as the vector, see \cref{cor:tangent-projections}.
For the same densities, \cref{cor:tangent-orders} shows that the fractional Riesz vector is compact when
$\alpha>n-1$, while no bounded operator agrees with its off-diagonal kernel when $\alpha<n-1$. Thus $\alpha=n-1$ is
the only order at which the operator is bounded but not compact, and \eqref{eq:tangent-exact} measures exactly how
far it is from compact.

\subsection{Distance powers and field operators}

The abstract theory is tested on the operators that arise when a curve carries a charge or a current.
Let $\Gamma$ now be a nontrivial compact regular simple $C^1$ curve in $\R^n$, $n\geq2$, and let the source measure
be $ds_\Gamma$. For $\beta>-(n-1)$, the target measure $\dist(x,\Gamma)^\beta\,dx$ is locally finite.
We prove the following compactness ranges in \cref{thm:compact-distance-powers,prop:compact-biot-savart}:
\begin{itemize}
\item On a sufficiently small bounded tube, the Newtonian gradient is compact if and only if $\beta>n-3$.
The Dirichlet Green gradient has the same compactness range when the closed tube lies inside a bounded $C^2$ domain,
see \cref{prop:green}.
\item On $\R^n$, the Newtonian gradient is compact if and only if $n-3<\beta<n-2$.
\item On $\R^3$, the Biot--Savart operator on arbitrary vector densities is compact if and only if $0<\beta<1$.
\end{itemize}

These are precisely the boundedness ranges in \cite[Corollaries~8.2--8.4]{BoundedDini}, so for these weights
boundedness and compactness coincide. The compactness is not of the trivial kind. The local Green and Newtonian
gradients are Hilbert--Schmidt exactly when $\beta>n-2$, so on the interval $n-3<\beta\leq n-2$ compactness holds
but is invisible to the Hilbert--Schmidt criterion, and the whole-space operators are never Hilbert--Schmidt. We
use the quantitative near-curve and far-field estimates from \cite{BoundedDini} to construct Hilbert--Schmidt
approximants converging in operator norm, and we show by direct lower bounds that the ranges are sharp. These
distance-power results require only $C^1$ regularity.

The physical content of compactness is the following. For $\beta>n-3$, weak $L^2(ds_\Gamma)$ convergence of
source densities yields strong weighted $L^2$ convergence of the Green gradients on the bounded tube, and
convergence of their weighted energies. For Biot--Savart fields, the corresponding strong convergence holds on the
whole space for $0<\beta<1$, see \cref{cor:weak-source-fields}. In particular, a weakly convergent sequence of
currents along $\Gamma$ produces magnetic fields converging in the weighted energy norm.

Section~\ref{sec:definitions} defines the testing quantities and specifies the boundedness result used here.
Sections~\ref{sec:multipliers}--\ref{sec:criterion} prove \cref{thm:main}.
Section~\ref{sec:examples} discusses testing and examples. Sections~\ref{sec:compact-pde} and \ref{sec:tangent}
prove the field results and \cref{thm:tangent-exact}, respectively.

\section{Testing quantities and the boundedness theorem}\label{sec:definitions}

A compact regular $C^{1,\mathrm{Dini}}$ curve is the image of an arclength parametrization $\Phi:[0,L]\to\R^n$ with
$|\Phi'|=1$, where $\Phi$ is one-to-one except for a possible identification of the two endpoints.  Its derivative
modulus satisfies
$$
 \vartheta_\Phi(t)=\sup_{|u-v|\leq t}|\Phi'(u)-\Phi'(v)|,
 \qquad \int_0^1\vartheta_\Phi(t)\,\frac{dt}{t}<\infty.
$$
In the closed case the endpoint derivatives agree and parameter distance is cyclic.
We assume $\supp\sigma\subset\Gamma$. In particular, $\sigma$ is finite.

Let $\mathcal Q_{\mathrm{rot}} $ consist of the rotated half-open cubes $Q=c+\ell U[-1/2,1/2)^n$, where $c\in\R^n$, $0<\ell<\infty$, and $U$ is
orthogonal, and let $c_Q=c$ and $\ell(Q)=\ell$.
All the constants below use this family.
The cubes $D$ in the suprema are axis-parallel and half-open.
On $L^2(\mu)$, we use the inner product
$\langle f,g\rangle=\int f\,\overline g\,d\mu$.

\subsection{The four Muckenhoupt conditions}
\label{The four Muckenhoupt conditions}
For locally finite measures $\mu,\nu$, define
$$
 \mathcal P^\alpha(Q,\mu)=\int_{\R^n}\frac{\ell(Q)^d}{(\ell(Q)+|y-c_Q|)^{2d}}\,d\mu(y).
$$
The one-tailed constants are
$$
 \mathcal A_2^\alpha(\mu,\nu)
 =\sup_{Q\in\mathcal Q_{\mathrm{rot}} }\frac{\nu(Q)}{\ell(Q)^d}\mathcal P^\alpha(Q,\one_{Q^c}\mu),
 \qquad \mathcal A_2^{\alpha,*}(\mu,\nu)=\mathcal A_2^\alpha(\nu,\mu).
$$
A term with zero inside mass is defined to be zero.

Let $$\mathfrak P(\mu,\nu)=\{p:\mu(\{p\})\nu(\{p\})>0\}$$ be the set of common atoms.
For a bounded Borel set $E$, define
$$
 \mu(E,\mathfrak P)=\mu(E)-\sup_{p\in E\cap\mathfrak P}\mu(\{p\}),
$$
where an empty supremum is zero. Since $\mu(E)<\infty$, a largest mass exists when $E\cap\mathfrak P\ne\varnothing$.
The punctured constants are
$$
 A_2^{\alpha}(\mu,\nu)
 =\sup_{Q\in\mathcal Q_{\mathrm{rot}}}
   \frac{\mu(Q,\mathfrak P)\nu(Q)}{\ell(Q)^{2d}},
 \qquad
 A_2^{\alpha,*}(\mu,\nu)
 =\sup_{Q\in\mathcal Q_{\mathrm{rot}}}
   \frac{\mu(Q)\nu(Q,\mathfrak P)}{\ell(Q)^{2d}}.
$$
Define
$$
 \mathfrak M_2^\alpha(\mu,\nu)=\mathcal A_2^\alpha(\mu,\nu)+\mathcal A_2^{\alpha,*}(\mu,\nu)
 +A_2^{\alpha}(\mu,\nu)+A_2^{\alpha,*}(\mu,\nu).
$$
\subsection{Smooth truncations and testing}\label{subsec:truncations}

We use the radial cutoff $\chi_{\varepsilon,R}$ from the Introduction, as in \cite[Section~3]{BoundedDini}.
It belongs to $C_c^\infty(\R^n)$, equals one for $\varepsilon\leq|z|\leq R$, and vanishes for
$|z|\leq\varepsilon/2$ or $|z|\geq2R$. Define
$$
 \mathfrak N_s(\mu,\nu)=\sup_{0<\varepsilon<R}
   \|T_{\varepsilon,R,\mu}\|_{L^2(\mu)\to L^2(\nu;\mathbb C^n)}.
$$

For a vector kernel we use the componentwise transposed vector $\bfK^*(x,y)=\overline{\bfK(y,x)}$.  Define
$$
 \mathfrak T(\mu,\nu)
 =\sup_{\substack{Q\in\mathcal Q_{\mathrm{rot}} ,\ \mu(Q)>0\\0<\varepsilon<R}}
  \frac{\|\one_QT_{\varepsilon,R,\mu}\one_Q\|_{L^2(\nu)}}{\sqrt{\mu(Q)}},
$$
and
$$
 \mathfrak T^{\mathrm{dual}}(\mu,\nu)
 =\sup_{\substack{Q\in\mathcal Q_{\mathrm{rot}} ,\ \nu(Q)>0\\0<\varepsilon<R}}
  \frac{\|\one_QT_{\varepsilon,R,\nu}^{\vee}\one_Q\|_{L^2(\mu)}}{\sqrt{\nu(Q)}}.
$$
Here $T^\vee$ integrates a complex-valued function against $\bfK^*$. Each component is the adjoint of the
corresponding component of the original operator. Since the Riesz kernel is real and antisymmetric,
$\bfK^*(x,y)=-\bfK^{\alpha,n}(x,y)$, so $T_{\varepsilon,R,\nu}^{\vee}=-T_{\varepsilon,R,\nu}$.
Empty suprema are zero. Let
$$
 \Theta(\mu,\nu)=\sqrt{\mathfrak M_2^\alpha(\mu,\nu)}
 +\mathfrak T(\mu,\nu)+\mathfrak T^{\mathrm{dual}}(\mu,\nu).
$$
Interchanging the measures exchanges the testing constants and the Muckenhoupt constants in pairs, so
$\Theta(\mu,\nu)=\Theta(\nu,\mu)$.

Choose a finite graph cover of $\Gamma$ as in \cite[Lemma~6.2 and Section~7, Step~1]{BoundedDini}. After a rigid motion,
each arc has the form $\{(t,\psi_j(t)):t\in I_j\}$, where $\psi_j$ is $C^{1,\mathrm{Dini}}$. Choose the charts small
enough that their global extensions satisfy $\|D\psi_j\|_\infty<(n-\alpha)/(8n^2)$. Fix the charts, their enlargements and
overlaps, including all slope bounds and derivative Dini moduli, before restricting either measure.

We use the boundedness theorem of \cite[Theorem~1.3 and Lemma~3.3]{BoundedDini} in the following form for smooth truncations.

\begin{proposition}\label{prop:bounded-smooth}
Fix $0\leq\alpha<n$, the curve $\Gamma$, its finite graph cover, and the cutoff $\eta_0$ above.  Let $\mu,\nu$ be
arbitrary locally finite positive Borel measures on $\R^n$, possibly with common atoms.  If $\supp\mu\subset\Gamma$ or
$\supp\nu\subset\Gamma$, then
\begin{equation}\label{eq:bounded-smooth}
 C_\Gamma^{-1}\Theta(\mu,\nu)\leq\mathfrak N_s(\mu,\nu)\leq C_\Gamma\Theta(\mu,\nu).
\end{equation}
The constant $C_\Gamma$ depends only on $n,\alpha,\eta_0$, the graph cover and the geometric parameters in the
boundedness theorem. It is independent of the measures and applies to
$(\one_D\mu,\one_D\nu)$ for every Borel set $D$, with their common atoms recomputed after restriction.
\end{proposition}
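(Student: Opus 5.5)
The plan is to obtain \cref{prop:bounded-smooth} directly from \cite[Theorem~1.3 and Lemma~3.3]{BoundedDini}. The work is bookkeeping: check that every constant there depends only on data fixed before any measure is chosen, and reduce the case $\supp\nu\subset\Gamma$ to the case $\supp\mu\subset\Gamma$ by duality.

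\emph{Step 1: the case $\supp\mu\subset\Gamma$.} Theorem~1.3 of \cite{BoundedDini} characterizes uniform boundedness of a family of truncations by the six constants entering $\Theta(\mu,\nu)$. These are computed over rotated half-open cubes, with punctured masses relative to the common atoms of $(\mu,\nu)$. I expect Lemma~3.3 there to compare the truncations used in the theorem with the smooth truncations $T_{\varepsilon,R,\mu}$ built from $\chi_{\varepsilon,R}$. The comparison constants should depend only on $\eta_0$, $n$ and $\alpha$: the difference of two such cutoffs at comparable scales is controlled by a local maximal-type term, which is bounded by the Muckenhoupt constants. Combining the two gives \eqref{eq:bounded-smooth} with $\mathfrak N_s$ defined through smooth truncations.

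\emph{Step 2: the lower bound.} The inequality $\Theta\lesssim\mathfrak N_s$ is partly elementary. Indeed $\|\one_Q T_{\varepsilon,R,\mu}\one_Q\|_{L^2(\nu)}\le \mathfrak N_s\sqrt{\mu(Q)}$ by testing on $\one_Q$. The dual testing bound follows since $T^\vee_{\varepsilon,R,\nu}=-T_{\varepsilon,R,\nu}$ is, componentwise, the adjoint of $T_{\varepsilon,R,\mu}$, so it has the same norm. The Muckenhoupt necessity, including the punctured conditions at common atoms, is taken from the necessity half of \cite[Theorem~1.3]{BoundedDini}. That argument uses only the kernel's size and nondegeneracy on rotated cubes, so its constants are absolute.

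\emph{Step 3: uniformity and restriction.} If $\supp\mu\subset\Gamma$, then $\supp(\one_D\mu)\subset\Gamma$ for every Borel set $D$. Hence the same finite graph cover, with its slope bound $\|D\psi_j\|_\infty<(n-\alpha)/(8n^2)$ and its Dini moduli, serves for the restricted pair. Since the sufficiency proof of \cite{BoundedDini} uses the measures only through $\Theta$, the constant $C_\Gamma$ is unchanged. One point needs care: the common atoms of $(\one_D\mu,\one_D\nu)$ are those of $(\mu,\nu)$ lying in $D$. The proposition applies to the restricted pair as a pair in its own right, so its punctured constants are the ones recomputed after restriction, exactly as in the definition.

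\emph{Step 4: the case $\supp\nu\subset\Gamma$.} Interchanging the roles of the measures is immediate. The adjoint of $T_{\varepsilon,R,\mu}:L^2(\mu)\to L^2(\nu;\mathbb C^n)$ is, componentwise, $-T_{\varepsilon,R,\nu}$ acting from $L^2(\nu)$ to $L^2(\mu)$. Therefore $\mathfrak N_s(\mu,\nu)$ is comparable to $\mathfrak N_s(\nu,\mu)$, with a factor depending only on $n$ that comes from passing between a vector operator and its components. Combined with $\Theta(\mu,\nu)=\Theta(\nu,\mu)$ and Step~1 applied to $(\nu,\mu)$, this gives \eqref{eq:bounded-smooth}.

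\emph{Main obstacle.} I expect the main difficulty to be Step~1: confirming that the statement of \cite{BoundedDini} genuinely covers smooth truncations with measure-independent constants, rather than some other truncation family. If Lemma~3.3 there compares only sharp and smooth truncations, I would bound the difference $|\chi_{\varepsilon,R}-\one_{\{\varepsilon\le|z|\le R\}}|$ by annular averages near $|z|\sim\varepsilon$ and $|z|\sim R$. Each annular average is controlled by $\sqrt{\mathfrak M_2^\alpha}$ through the punctured and one-tailed conditions. The atoms are harmless here, because every cutoff vanishes on the diagonal.
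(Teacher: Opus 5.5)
Your proposal is correct and follows the paper's proof in structure. Both use sufficiency from \cite[Theorem~1.3]{BoundedDini} combined with \cite[Lemma~3.3]{BoundedDini}, the elementary testing bounds, a cited Muckenhoupt necessity, and uniformity under restriction because the graph cover and cutoff are fixed before the measures. The differences are in the details.

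In the paper, Theorem~1.3 of \cite{BoundedDini} is stated with tangent-line truncations for both the norm and the testing constants. Lemma~3.3 is therefore used twice: to get $\mathfrak T_{\mathrm{tl}}+\mathfrak T_{\mathrm{tl}}^{\mathrm{dual}}\lesssim\Theta(\mu,\nu)$, and to get $\mathfrak N_s\lesssim\mathfrak N_{\mathrm{tl}}+\sqrt{\mathfrak M_2^\alpha}$. Your comparison of truncation families, if proved as an operator-norm bound on the difference of truncations, gives both at once.

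For the Muckenhoupt lower bound, the paper does not use the necessity half of the tangent-line theorem. Going through the tangent-line constants would need $\mathfrak N_{\mathrm{tl}}\lesssim\mathfrak N_s$ with no Muckenhoupt error term. Instead it applies \cite[Theorem~5(2)]{CurvePaper} directly to the smooth radial cutoff, which equals one on $[\varepsilon,R]$ and vanishes near $0$. It does this on axis-parallel cubes for the pushforwards $(U_\#\mu,U_\#\nu)$, $U\in O(n)$, and reaches rotated cubes through $\mathfrak N_s(U_\#\mu,U_\#\nu)=\mathfrak N_s(\mu,\nu)$. This is the precise form of your claim that the argument sees only the kernel's size and nondegeneracy.

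Your Step~4 duality reduction is valid; the paper uses the same device later in the proof of \cref{thm:main}. It is not needed here, because the cited theorem already allows either measure on $\Gamma$.

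Finally, $T^\vee_{\varepsilon,R,\nu}$ need not have the same norm as $T_{\varepsilon,R,\mu}$. Each of its $n$ components is bounded by that norm, so you only get $\mathfrak T^{\mathrm{dual}}\le\sqrt n\,\mathfrak N_s$. This is harmless for the statement.
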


\begin{proof}
The four Muckenhoupt constants required in \cite[Theorem~1.3]{BoundedDini} are bounded by
$\mathfrak M_2^\alpha$. That theorem allows either measure to be supported on $\Gamma$.
Denote its tangent-line constants by $\mathfrak N_{\mathrm{tl}}$,
$\mathfrak T_{\mathrm{tl}}$ and $\mathfrak T_{\mathrm{tl}}^{\mathrm{dual}}$,
with the norm and testing constants defined as above using the tangent-line truncations from \cite[Section~3]{BoundedDini}
and suprema over $0<\varepsilon<R$.
Apply \cite[Lemma~3.3]{BoundedDini} to the original Riesz kernel and the family $\mathcal Q_{\mathrm{rot}} $.
The change of variables is the identity, so the quasicubes in that lemma are ordinary cubes. Then
$$
 \mathfrak T_{\mathrm{tl}}+\mathfrak T_{\mathrm{tl}}^{\mathrm{dual}}
 \leq C\bigl(\mathfrak T+\mathfrak T^{\mathrm{dual}}+\sqrt{\mathfrak M_2^\alpha}\bigr)
 =C\Theta(\mu,\nu).
$$
The same lemma and \cite[Theorem~1.3]{BoundedDini} imply
\begin{align*}
 \mathfrak N_s
 &\leq C\bigl(\mathfrak N_{\mathrm{tl}}+\sqrt{\mathfrak M_2^\alpha}\bigr)
 \leq C_\Gamma\bigl(\sqrt{\mathfrak M_2^\alpha}
       +\mathfrak T_{\mathrm{tl}}+\mathfrak T_{\mathrm{tl}}^{\mathrm{dual}}\bigr)
 \leq C_\Gamma\Theta(\mu,\nu),
\end{align*}
with constants independent of both truncation parameters.

For necessity, \cite[Theorem~5(2)]{CurvePaper} applies to the fractional Riesz vector with common atoms.
Our radial cutoff is compactly supported away from zero
and equals one on $[\varepsilon,R]$. The cubes are half-open, the one-tailed Poisson integrals
use $Q^c$ as in \cite[Definition~21]{CurvePaper}, and the punctured mass subtracts the largest common-atom mass
as in \cite[Section~2.2.1]{CurvePaper}.
For $U\in O(n)$, apply it to the pushforward pair $(U_\#\mu,U_\#\nu)$ and axis-parallel cubes.
The constants are independent of $U$, and rotational covariance yields
$$
 \sqrt{\mathfrak M_2^\alpha(\mu,\nu)}
 \leq C_{n,\alpha}\sup_{U\in O(n)}\mathfrak N_s(U_\#\mu,U_\#\nu)
 =C_{n,\alpha}\mathfrak N_s(\mu,\nu).
$$
For the testing constants, restriction is contractive and the $n$ adjoint components each have norm at most
$\mathfrak N_s(\mu,\nu)$. Thus
$$
 \mathfrak T(\mu,\nu)\leq\mathfrak N_s(\mu,\nu),\qquad
 \mathfrak T^{\mathrm{dual}}(\mu,\nu)\leq\sqrt n\,\mathfrak N_s(\mu,\nu),
$$
and consequently
$$
 \Theta(\mu,\nu)\leq\bigl(C_{n,\alpha}+1+\sqrt n\bigr)\mathfrak N_s(\mu,\nu).
$$

Restriction to $D$ changes neither the charts nor the cutoff. The same proof therefore applies to
$(\one_D\mu,\one_D\nu)$ with $C_\Gamma$ independent of $D$ and punctured masses computed for the restricted pair.
\end{proof}

\subsection{Restriction of both measures}

If $\Theta(\sigma,\omega)<\infty$, \cref{prop:bounded-smooth} yields uniformly bounded smooth truncations.
As in the Introduction, fix $T$ and a strictly increasing subsequence $j_k$.

For a Borel set $D$ and a testing cube $Q$, let
$E_Q=D\cap Q$. Then
\begin{align*}
 \mathcal A_2^\alpha( \one_D\sigma , \one_D\omega )
 &=\sup_{Q\in\mathcal Q_{\mathrm{rot}} }
  \omega(E_Q)\int_{D\setminus Q}
     \frac{d\sigma(y)}{(\ell(Q)+|y-c_Q|)^{2d}},\\
 \mathcal A_2^{\alpha,*}( \one_D\sigma , \one_D\omega )
 &=\sup_{Q\in\mathcal Q_{\mathrm{rot}} }
  \sigma(E_Q)\int_{D\setminus Q}
     \frac{d\omega(y)}{(\ell(Q)+|y-c_Q|)^{2d}}.
\end{align*}
The punctured conditions become
\begin{align*}
 A_2^{\alpha}( \one_D\sigma , \one_D\omega )
 &=\sup_{Q\in\mathcal Q_{\mathrm{rot}} }
   \frac{\sigma(E_Q,\mathfrak P(\sigma,\omega)\cap D)\omega(E_Q)}{\ell(Q)^{2d}},\\
 A_2^{\alpha,*}( \one_D\sigma , \one_D\omega )
 &=\sup_{Q\in\mathcal Q_{\mathrm{rot}} }
   \frac{\sigma(E_Q)\omega(E_Q,\mathfrak P(\sigma,\omega)\cap D)}{\ell(Q)^{2d}}.
\end{align*}
We note that 
$$
 \mathfrak T( \one_D\sigma , \one_D\omega )^2
 =\sup_{\substack{Q\in\mathcal Q_{\mathrm{rot}} ,\ \sigma(E_Q)>0\\
                  0<\varepsilon<R}}
 \frac{1}{\sigma(E_Q)}
 \int_{E_Q}
 \left|\int_{E_Q}
  \chi_{\varepsilon,R}(x-y)\bfK^{\alpha,n}(x,y)\,d\sigma(y)
 \right|^2d\omega(x).
$$
The dual expression interchanges the measures and transposes the components.

\section{Weak limits and smooth kernel multipliers}\label{sec:multipliers}

Let $\sigma,\omega$ be positive locally finite Borel measures. Assume that $\Sigma=\supp\sigma$ is compact and
$\mathfrak N_s(\sigma,\omega)<\infty$.

For a Borel set $E$, let $\one_E^\mu f=\one_Ef$ on $L^2(\mu)$, acting componentwise on vector-valued functions.
Fix $T$ and the subsequence $j_k$ chosen above. For every Borel set $D$, define
$$
 T_D=\one_D^\omega T \one_D^\sigma.
$$
Its norm is unchanged on the restricted $L^2$ spaces, and weak convergence implies
$\|T_D\|\leq\mathfrak N_s( \one_D\sigma , \one_D\omega )$.

For bounded Borel sets $A,B$ with $\dist(A,B)>0$, the cutoff equals one on $A\times B$ whenever
$\varepsilon<\dist(A,B)$ and $R>\diam(A\cup B)$. Passing to any weak limit as $\varepsilon\to0^+$ and $R\to\infty$ yields
\begin{equation}\label{eq:off-diagonal}
\one_A^\omega T \one_B^\sigma f(x)=\one_A(x)\int_B\bfK^{\alpha,n}(x,y)f(y)\,d\sigma(y).
\end{equation}
The kernel is square integrable on $A\times B$. The identity holds first for bounded functions and then on $L^2$ by density.
Since every truncation vanishes on the diagonal, every common atom $p$ satisfies
\begin{equation}\label{eq:zero-atomic-diagonal}
 \one_{\{p\}}^\omega T \one_{\{p\}}^\sigma=0.
\end{equation}
The restricted truncations converge weakly to $T_D$ along the same subsequence.

\begin{proposition}\label{prop:unique}
If $\mathfrak N_s(\sigma,\omega)<\infty$, $T$ is unique, and
$$
 T_{\varepsilon,R,\sigma}\to T\quad\text{in the weak operator topology as }\varepsilon\to0^+,\ R\to\infty.
$$
The two parameters vary independently.
\end{proposition}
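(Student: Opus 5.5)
The plan is to show that any two weak operator limit points $S_1,S_2$ of the net $T_{\varepsilon,R,\sigma}$ ($\varepsilon\to0^+$, $R\to\infty$) coincide. Since the net is uniformly bounded by $\mathfrak N_s(\sigma,\omega)$ and bounded sets are weakly compact and metrizable on the separable spaces, this gives weak convergence of the whole net. Put $S=S_1-S_2$. By \eqref{eq:off-diagonal}, $\one_A^\omega S\one_B^\sigma=0$ whenever $A,B$ are bounded Borel sets with $\dist(A,B)>0$.

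\textbf{Step 1: $S$ is a multiplication operator.} Take the dyadic grid of side $2^{-m}$ and split $S=\sum_{Q,Q'}\one_Q^\omega S\one_{Q'}^\sigma$. Only neighboring pairs survive. For a bounded Lipschitz function $\phi$, compare $\phi S-S\phi$, with $\phi$ acting on the $\omega$ side and the $\sigma$ side respectively. On each neighboring pair, replace $\phi$ by its value at the centre. The error per pair is $O(2^{-m}\|\phi\|_{\mathrm{Lip}}\|S\|)$, and the pairs can be summed with bounded overlap, using almost orthogonality via the finite colouring of neighboring cubes. Letting $m\to\infty$ gives $\phi S=S\phi$ for all Lipschitz $\phi$, hence for all bounded Borel $\phi$ by a monotone class argument.

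Now test on $f=\one_E$ with $E\subset\Sigma$. Then $S\one_E=S\one_E\one_E=\one_E S\one_E$ is supported in $E$. Write $g=S1$ (note $\sigma$ is finite). For any bounded Borel $\phi$ we get $S\phi=\phi\,g$ in $L^2(\omega)$, so $S$ acts as $f\mapsto gf$. Functions supported where $\sigma$ does not live are killed, so $g$ is effectively a vector multiplier living on sets charged by both measures. Boundedness forces $|g|^2\,d\omega\le\|S\|^2\,d\sigma$ locally, so $\omega\ll\sigma$ on the support of $g$. The resulting density relation is $\int |g|^2|f|^2\,d\omega\le\|S\|^2\int|f|^2\,d\sigma$.

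\textbf{Step 2: symmetric change of density.} Let $\rho=\sigma+\omega$, with $d\sigma=u\,d\rho$ and $d\omega=v\,d\rho$. Via $f\mapsto f\sqrt u$ and $h\mapsto h\sqrt v$, transfer each truncation to an operator on $L^2(\rho)$ with kernel $\sqrt{v(x)}\chi_{\varepsilon,R}(x-y)\bfK^{\alpha,n}(x,y)\sqrt{u(y)}$. Take the auxiliary operator with the roles of $u$ and $v$ exchanged, namely kernel $\sqrt{u(x)}\,\chi\bfK\,\sqrt{v(y)}$, which is the truncation from $L^2(\omega)$ to $L^2(\sigma)$. Since $\bfK$ is real and antisymmetric and $\chi$ is even, each componentwise adjoint of the transferred truncation equals minus the exchanged truncation. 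The exchanged truncations are also uniformly bounded, because $\Theta$ is symmetric and \cref{prop:bounded-smooth} applies (or directly, by taking adjoints). Passing to the same limits along a common subsequence, the transferred difference $\tilde S_j$ is multiplication by $\tilde g_j=g_j\sqrt{v/u}$ on $\{uv>0\}$. The exchanged difference is multiplication by some $\tilde h_j$, and the relation $\tilde S_j^{*}=-\tilde H_j$ gives $\overline{\tilde g_j}=-\tilde h_j$.

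\textbf{Step 3: reality and the main obstacle.} The key point, which I expect to be the main obstacle, is to show that the transferred kernels are symmetric under the exchange, so that $\tilde H_j=\tilde S_j$. This means choosing the change of density so the exchanged family coincides with the original on $\{uv>0\}$. One option is to use the geometric-mean weight $\sqrt{uv}$ on both sides, which only the diagonal part sees. Granting this, $\tilde S_j$ is skew-adjoint. It is also real, because the kernels are real, so the weak limits commute with complex conjugation. A real skew-adjoint multiplication operator has multiplier $\tilde g_j$ with $\overline{\tilde g_j}=-\tilde g_j$ and $\tilde g_j$ real, so $\tilde g_j=0$. Hence $S=0$.

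If the symmetrization in Step 3 fails, the fallback is to compute $g$ directly. Off atoms, the multiplier is detected by $\lim_{r\to0}\langle S\one_{B(x,r)},\one_{B(x,r)}\rangle/\sigma(B(x,r))$. For each fixed truncation this pairing vanishes by antisymmetry whenever the two measures are restricted to the same ball and are equal, and \eqref{eq:zero-atomic-diagonal} handles common atoms.
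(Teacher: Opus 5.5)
Your Step 1 is sound and is a legitimate variant of the paper's first step. You obtain $\one_E^\omega S=S\one_E^\sigma$ from Lipschitz commutators, using the neighbouring-cube decomposition and a monotone class argument. The paper gets the same identity from inner regularity on disjoint Borel sets. Either way, after transfer to $L^2(\rho)$ the difference is multiplication by a bounded $\tilde g$ that vanishes on $\{uv=0\}$.

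The gap is Step 3, which is the heart of the proposition and which you only grant. As you formulate it, it cannot be carried out. The transferred kernels $\sqrt{v(x)}\,\chi_{\varepsilon,R}(x-y)\bfK^{\alpha,n}(x,y)\sqrt{u(y)}$ are not antisymmetric when $u\neq v$, and no unitary change of density makes them so. The relation $\tilde S_j^*=-\tilde H_j$ from Step 2 holds by the very definition of the exchanged family, since its truncations are minus the componentwise adjoints. It therefore imposes no constraint on $\tilde g_j$, and nothing you have written shows that $\tilde S_j$ is skew-adjoint. Your fallback fails for the same reason: when $\sigma\neq\omega$, antisymmetry turns $\int_B\int_B\chi_{\varepsilon,R}(x-y)\bfK^{\alpha,n}(x,y)\,d\sigma(y)\,d\omega(x)$ into an integral against the swapped product $\omega\times\sigma$, so nothing cancels.

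The missing idea is to sandwich instead of re-transferring. Compose the transferred truncations with the non-unitary multipliers $M_{\sqrt u}$ on the left and $M_{\sqrt v}$ on the right. The new kernels $\sqrt{u(x)v(x)}\,\chi_{\varepsilon,R}(x-y)\bfK^{\alpha,n}(x,y)\sqrt{u(y)v(y)}$ are real, bounded, antisymmetric and supported in $\Sigma\times\Sigma$, so every component is a real skew-adjoint operator on $L^2(\rho)$. Since $0\le u,v\le 1$, composing with these fixed bounded operators is continuous in the weak operator topology. Both properties therefore pass to the limits, and the difference of the two limits is $M_{\sqrt u}M_{\tilde g}M_{\sqrt v}=M_{\sqrt{uv}\,\tilde g}$. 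Each component of $\sqrt{uv}\,\tilde g$ is then both real and purely imaginary, hence zero. Since $\tilde g=0$ on $\{uv=0\}$, you get $\tilde g=0$, i.e.\ $S=0$.

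This is the paper's argument. It works precisely because Step 1 has already reduced the difference to a multiplication operator, so the non-unitary weights only multiply the symbol by $\sqrt{uv}$, which is positive where $\tilde g$ lives. Your geometric-mean remark points in this direction, but it has to be implemented as this sandwiching, not as a change of density.
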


\begin{proof}
For two weak limits $T,T'$ along sequences with $\varepsilon\to0^+$ and $R\to\infty$, let $X=T-T'$.
By \eqref{eq:off-diagonal}, $\one_A^\omega X\one_B^\sigma=0$ for bounded separated sets.
Given disjoint Borel sets $A,B$ and $f\in L^2(\sigma)$, $g\in L^2(\omega;\mathbb C^n)$,
choose compact sets $A_m\subset A$, $B_m\subset B$ by inner regularity so that
$$
 \|\one_{A\setminus A_m}g\|_{L^2(\omega)}+\|\one_{B\setminus B_m}f\|_{L^2(\sigma)}\to0.
$$
The disjoint compact sets are separated, so
$
 \langle X\one_B^\sigma f,\one_A^\omega g\rangle
 =\lim_m\langle X\one_{B_m}^\sigma f,\one_{A_m}^\omega g\rangle=0.
$
Taking $(A,B)=(E,E^c)$ and $(E^c,E)$ yields
$
 \one_E^\omega X=X\one_E^\sigma$ for every Borel set $E$.

Let $\lambda=\sigma+\omega$, $a=d\sigma/d\lambda$ and $b=d\omega/d\lambda$.
The maps
$$
 U_\sigma f=\sqrt a\,f,\qquad U_\omega g=\sqrt b\,g
$$
are isometries into $L^2(\lambda)$ and $L^2(\lambda;\mathbb C^n)$, respectively.
In particular, $U_\sigma^*h=\one_{\{a>0\}}h/\sqrt a$, with the quotient defined as zero on $\{a=0\}$.
Let $Z=U_\omega XU_\sigma^*$. The projection identity becomes
$$
 \one_E^\lambda Z=Z\one_E^\lambda,\qquad Z=\one_\Sigma^\lambda Z\one_\Sigma^\lambda.
$$
Since $\lambda(\Sigma)<\infty$, define $v=Z\one_\Sigma$. For every Borel set $E\subset\Sigma$,
$$
 Z\one_E=\one_Ev,\qquad
 \int_E|v|^2\,d\lambda\leq\|Z\|^2\lambda(E).
$$
Thus $v\in L^\infty(\lambda;\mathbb C^n)$, and simple-function approximation proves $Z=M_v$.
Here $M_v$ denotes multiplication by $v$. Multiplication by a complex-valued function acts componentwise.
The ranges of $U_\sigma$ and $U_\omega$ consist of functions supported on $\{a>0\}$ and $\{b>0\}$, respectively. Hence
$$
 Z=M_{\one_{\{b>0\}}}ZM_{\one_{\{a>0\}}},\qquad v=0\quad\text{on }\{ab=0\}.
$$

To determine $v$, consider the operators
$$
 Q_{\varepsilon,R}
 =M_{\sqrt a}U_\omega T_{\varepsilon,R,\sigma}U_\sigma^*M_{\sqrt b}.
$$
Their kernels are
$$
 q_{\varepsilon,R}(x,y)
 =\sqrt{a(x)b(x)}\,\chi_{\varepsilon,R}(x-y)\bfK^{\alpha,n}(x,y)\sqrt{a(y)b(y)}.
$$
These kernels are real, bounded and supported in $\Sigma\times\Sigma$, hence square integrable with respect to
$\lambda\times\lambda$. Since $q_{\varepsilon,R}(y,x)=-q_{\varepsilon,R}(x,y)$, every component of
$Q_{\varepsilon,R}$ is real and skew-adjoint. Both properties pass to weak operator limits.
The difference of the limits along the two sequences is
$$
 M_{\sqrt a}ZM_{\sqrt b}=M_{\sqrt{ab}\,v}.
$$
Each coefficient $\sqrt{ab}\,v_i$ is real, so skew-adjointness implies
$$
 M_{\sqrt{ab}\,v_i}=-M_{\sqrt{ab}\,v_i}^*=-M_{\sqrt{ab}\,v_i}.
$$
Hence $\sqrt{ab}\,v=0$. Since $v=0$ on $\{ab=0\}$, we conclude that $Z=0$ and $T=T'$.

If $T_{\varepsilon,R,\sigma}$ failed to converge to $T$ in the weak operator topology, some matrix coefficient would
remain a fixed positive distance from its value at $T$ along a sequence with $\varepsilon\to0^+$ and $R\to\infty$.
Uniform boundedness and separability yield a subsequence converging in the weak operator topology,
whose limit must be $T$ by uniqueness, a contradiction.
\end{proof}

To recover each smooth truncation from $T$, we define a kernel multiplier for arbitrary positive locally finite measures.
For $h\in C_c^\infty(\R^n)$, let
$$
 \widehat h(\xi)=\int_{\R^n}e^{-ix\cdot\xi}h(x)\,dx,
 \qquad \Lambda(h)=(2\pi)^{-n}\|\widehat h\|_1.
$$
Multiplication by $e^{ix\cdot\xi}$ defines a unitary $U_\xi^\mu$ on $L^2(\mu)$, also acting componentwise. For
$S:L^2(\sigma)\to L^2(\omega;\mathbb C^n)$ bounded, define the weak operator integral
\begin{equation}\label{eq:Fourier-Schur}
 \mathfrak S_h(S)=(2\pi)^{-n}\int_{\R^n}\widehat h(\xi)U_\xi^\omega S U_{-\xi}^\sigma\,d\xi.
\end{equation}
For $f\in L^2(\sigma)$ and $g\in L^2(\omega;\mathbb C^n)$,
$$
 |\langle\mathfrak S_h(S)f,g\rangle|
 \leq(2\pi)^{-n}\int_{\R^n}|\widehat h(\xi)|\|S\|\|f\|_2\|g\|_2\,d\xi
 =\Lambda(h)\|S\|\|f\|_2\|g\|_2.
$$
Thus $\|\mathfrak S_h(S)\|\leq\Lambda(h)\|S\|$. Fubini and the Fourier product formula imply
\begin{align*}
 \mathfrak S_{h_1}(\mathfrak S_{h_2}(S))
 &=(2\pi)^{-2n}\iint\widehat h_1(\xi)\widehat h_2(\zeta)
       U_{\xi+\zeta}^\omega S U_{-\xi-\zeta}^\sigma\,d\zeta\,d\xi\\
 &=(2\pi)^{-n}\int\widehat{h_1h_2}(\xi)U_\xi^\omega S U_{-\xi}^\sigma\,d\xi\\
 &=\mathfrak S_{h_1h_2}(S).
\end{align*}
Defining $\mathfrak S_1(S)=S$ extends these identities to constants plus $C_c^\infty$ functions. If $S$ has a
square-integrable kernel $k$, Fourier inversion and Fubini yield
$$
 \mathfrak S_h(S)f(x)=\int h(x-y)k(x,y)f(y)\,d\sigma(y).
$$
Since $U_\xi^\mu$ commutes with $\one_D^\mu$, for every $h$ above,
$$
 \one_D^\omega\mathfrak S_h(S)\one_D^\sigma=\mathfrak S_h(\one_D^\omega S \one_D^\sigma).
$$
If $S_j\to S$ in the weak operator topology and $\sup_j\|S_j\|<\infty$, dominated convergence in
\eqref{eq:Fourier-Schur} yields
\begin{align*}
 \lim_j\langle\mathfrak S_h(S_j)f,g\rangle
 &=(2\pi)^{-n}\int\widehat h(\xi)
        \lim_j\langle U_\xi^\omega S_jU_{-\xi}^\sigma f,g\rangle\,d\xi
 =\langle\mathfrak S_h(S)f,g\rangle,
\end{align*}
with integrable majorant $|\widehat h(\xi)|\sup_j\|S_j\|\|f\|_2\|g\|_2$. The constant term passes to the limit directly.

\begin{lemma}\label{lem:Schur}
For arbitrary positive locally finite measures $\sigma,\omega$, there is a constant $C_\eta$ such that
$$
 \|\mathfrak S_{\chi_{\varepsilon,R}}(S)\|\leq C_\eta\|S\|
 \qquad(0<\varepsilon<R)
$$
for every bounded operator $S:L^2(\sigma)\to L^2(\omega;\mathbb C^n)$.
If $\Sigma=\supp\sigma$ is compact, then
\begin{equation}\label{eq:cutoff-of-limit}
 \mathfrak S_{\chi_{\varepsilon,R}}(T)=T_{\varepsilon,R,\sigma}.
\end{equation}
Consequently, for every Borel set $D$,
\begin{equation}\label{eq:compression-truncation}
 \|T_D\|\leq\mathfrak N_s( \one_D\sigma , \one_D\omega )\leq C_\eta\|T_D\|.
\end{equation}
\end{lemma}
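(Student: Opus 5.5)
The proof has three parts: the uniform bound for $\mathfrak S_{\chi_{\varepsilon,R}}$, the identity \eqref{eq:cutoff-of-limit}, and the resulting comparison \eqref{eq:compression-truncation}.

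\emph{Uniform bound.} Since $\|\mathfrak S_h(S)\|\leq\Lambda(h)\|S\|$, it suffices to bound the Fourier $L^1$ norm of the cutoff uniformly in $\varepsilon$ and $R$. Write $\chi_{\varepsilon,R}=h_\varepsilon\cdot(1-g_R)$ with $h_\varepsilon(z)=\eta_0(|z|/\varepsilon)$ and $1-g_R(z)=\eta_0(R/|z|)$. Note that $1-h_\varepsilon$ and $g_R$ are compactly supported smooth functions, because $\eta_0(R/|z|)$ equals $1$ near $0$ and vanishes for $|z|\geq2R$.

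Set $\phi(z)=1-\eta_0(|z|)$ and $\psi(z)=1-\eta_0(1/|z|)$. Both lie in $C_c^\infty$, and $\phi$ is smooth at $0$ because it is constant near $0$. By dilation,
$$
\Lambda\bigl(\phi(\cdot/\varepsilon)\bigr)=\Lambda(\phi),
\qquad
\Lambda\bigl(\psi(\cdot/R)\bigr)=\Lambda(\psi).
$$
Expanding,
$$
\chi_{\varepsilon,R}=1-\phi_\varepsilon-\psi_R+\phi_\varepsilon\psi_R .
$$
Using the multiplicativity $\mathfrak S_{h_1h_2}=\mathfrak S_{h_1}\mathfrak S_{h_2}$, extended to constants plus $C_c^\infty$ functions, we get
$$
\mathfrak S_{\chi_{\varepsilon,R}}=(I-\mathfrak S_{\phi_\varepsilon})(I-\mathfrak S_{\psi_R}).
$$
Hence
$$
C_\eta=(1+\Lambda(\phi))(1+\Lambda(\psi)).
$$
(If $1$ is not in the allowed class for $\mathfrak S$, the definition $\mathfrak S_1=I$ handles it.)

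\emph{The identity \eqref{eq:cutoff-of-limit}.} Apply $\mathfrak S_{\chi_{\varepsilon,R}}$ to $T_{2^{-j_k},2^{j_k},\sigma}$. For $k$ large, $2^{-j_k}<\varepsilon/2$ and $2^{j_k}>2R$. Each truncation has a kernel bounded on $\Sigma\times\Sigma$, and it is square integrable against $\sigma\times\omega$. The latter needs $\int_{|x|>A}|x|^{-2d}\,d\omega<\infty$ for the tail. For the truncations themselves this is automatic, since their kernels are compactly supported in $x-y$ and $\Sigma$ is compact, so $x$ ranges over a bounded set and $\omega$ is locally finite.

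The kernel formula for $\mathfrak S_h$ then gives a product of cutoffs. Because $\eta_0$ equals $1$ on $[1,\infty)$, we have $\eta_0(|z|/\varepsilon)\eta_0(|z|/\varepsilon')=\eta_0(|z|/\varepsilon)$ when $\varepsilon'\leq\varepsilon/2$. The large-scale factor behaves the same way, so
$$
\chi_{\varepsilon,R}\,\chi_{2^{-j_k},2^{j_k}}=\chi_{\varepsilon,R}.
$$
Thus $\mathfrak S_{\chi_{\varepsilon,R}}(T_{2^{-j_k},2^{j_k},\sigma})=T_{\varepsilon,R,\sigma}$ for all large $k$. Letting $k\to\infty$ and using weak continuity of $\mathfrak S_h$ under uniformly bounded WOT limits yields \eqref{eq:cutoff-of-limit}. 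The constant term is handled directly, so the splitting above is also compatible with the limit.

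\emph{The comparison \eqref{eq:compression-truncation}.} The left inequality was noted just before the lemma. For the right inequality, use commutation with restrictions:
$$
T_{\varepsilon,R,\one_D\sigma}
=\one_D^\omega T_{\varepsilon,R,\sigma}\one_D^\sigma
=\one_D^\omega\,\mathfrak S_{\chi_{\varepsilon,R}}(T)\,\one_D^\sigma
=\mathfrak S_{\chi_{\varepsilon,R}}(T_D).
$$
Its norm is therefore at most $C_\eta\|T_D\|$, on the restricted spaces as well. Taking the supremum over $0<\varepsilon<R$ gives the claim.

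The main obstacle is making the uniform Fourier $L^1$ bound rigorous: the cutoff is not in $C_c^\infty$ and its scales degenerate. The factorization into $(I-\mathfrak S_{\phi_\varepsilon})(I-\mathfrak S_{\psi_R})$ with dilation-invariant $\Lambda$ is the step I would verify first.
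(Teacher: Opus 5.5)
Your route is the paper's. You split the cutoff into a small-scale and a large-scale factor with dilation-invariant $\Lambda$, and use multiplicativity of $\mathfrak S$. You recover $T_{\varepsilon,R,\sigma}$ from $T_{2^{-j_k},2^{j_k},\sigma}$ via $\chi_{\varepsilon,R}\chi_{2^{-j_k},2^{j_k}}=\chi_{\varepsilon,R}$ and weak continuity, and you commute $\one_D$ through $\mathfrak S_{\chi_{\varepsilon,R}}$. The second and third parts are correct as written.

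The uniform bound, the step you flagged, contains a concrete error: $\psi(z)=1-\eta_0(1/|z|)$ is \emph{not} in $C_c^\infty$. It vanishes for $|z|\leq1$ and equals $1$ for $|z|\geq2$. The same slip occurs for $g_R=1-\eta_0(R/|\cdot|)$, which equals $1$ for $|z|\geq2R$. The reason you give ($\eta_0(R/|z|)$ is $1$ near $0$ and vanishes for $|z|\geq2R$) shows that $1-g_R$, not $g_R$, is compactly supported. Consequently $\widehat\psi=(2\pi)^n\delta_0-\widehat{(1-\psi)}$ is not an integrable function. So the identity $\Lambda(\psi(\cdot/R))=\Lambda(\psi)$ and the bound $\|\mathfrak S_{\psi_R}(S)\|\leq\Lambda(\psi)\|S\|$ are not available as stated. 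The constant $(1+\Lambda(\phi))(1+\Lambda(\psi))$ you wrote down is therefore not a finite $L^1$ quantity.

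The repair is immediate and is exactly the paper's argument. Use $b=1-\psi$, that is, $b(z)=\eta_0(1/|z|)$ with $b(0)=1$. This function is in $C_c^\infty$: it equals $1$ on the unit ball and $0$ off the ball of radius $2$. Then $\chi_{\varepsilon,R}=(1-\phi(\cdot/\varepsilon))\,b(\cdot/R)$. Under the convention $\mathfrak S_1(S)=S$, your factor $I-\mathfrak S_{\psi_R}$ is exactly $\mathfrak S_{b(\cdot/R)}$, so
\[
 \mathfrak S_{\chi_{\varepsilon,R}}(S)=\mathfrak S_{b(\cdot/R)}(S)-\mathfrak S_{\phi(\cdot/\varepsilon)}\bigl(\mathfrak S_{b(\cdot/R)}(S)\bigr),
 \qquad
 \|\mathfrak S_{\chi_{\varepsilon,R}}(S)\|\leq(1+\Lambda(\phi))\Lambda(b)\|S\|.
\]
With this correction your proof coincides with the paper's.
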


\begin{proof}
Let $a(z)=1-\eta_0(|z|)$ and $b(z)=\eta_0(1/|z|)$ for $z\ne0$, with $b(0)=1$. Both functions belong to
$C_c^\infty(\R^n)$, and
$$
 \chi_{\varepsilon,R}(z)=(1-a(z/\varepsilon))b(z/R).
$$
Since $\Lambda(a(\cdot/\varepsilon))=\Lambda(a)$ and $\Lambda(b(\cdot/R))=\Lambda(b)$,
\begin{align*}
 \|\mathfrak S_{\chi_{\varepsilon,R}}(S)\|
 &=\|\mathfrak S_{b(\cdot/R)}(S)-\mathfrak S_{a(\cdot/\varepsilon)}(\mathfrak S_{b(\cdot/R)}(S))\|
 \leq(1+\Lambda(a))\Lambda(b)\|S\|=:C_\eta\|S\|.
\end{align*}

Let $k_j$ be the kernel of $T_{2^{-j},2^j,\sigma}$. Compactness of $\Sigma$ and local finiteness imply
$$
 \|k_j\|_{L^2(\omega\times\sigma)}^2
 \leq C2^{2jd}\sigma(\Sigma)\,\omega(\{x:\dist(x,\Sigma)\leq2^{j+1}\})<\infty.
$$
For fixed $\varepsilon,R$, choose $k$ large enough that $2^{-j_k}<\varepsilon/2$ and $2^{j_k}>2R$. Then
$
 \chi_{\varepsilon,R}\chi_{2^{-j_k},2^{j_k}}=\chi_{\varepsilon,R}.
$

The kernel formula and weak operator continuity along the defining subsequence yield
$$
 T_{\varepsilon,R,\sigma}
 =\mathfrak S_{\chi_{\varepsilon,R}}(T_{2^{-j_k},2^{j_k},\sigma})
 \to\mathfrak S_{\chi_{\varepsilon,R}}(T),
$$
which proves \eqref{eq:cutoff-of-limit}. Commuting the projections through $\mathfrak S_{\chi_{\varepsilon,R}}$, we obtain
\begin{align*}
 \mathfrak N_s( \one_D\sigma , \one_D\omega )
 &=\sup_{0<\varepsilon<R}\|\one_D^\omega T_{\varepsilon,R,\sigma}\one_D^\sigma\|
 =\sup_{0<\varepsilon<R}\|\mathfrak S_{\chi_{\varepsilon,R}}(T_D)\|
 \leq C_\eta\|T_D\|.
\end{align*}
Weak lower semicontinuity proves the other inequality:
$$
 \|T_D\|\leq\liminf_k\|\one_D^\omega T_{2^{-j_k},2^{j_k},\sigma}\one_D^\sigma\|
 \leq\mathfrak N_s( \one_D\sigma , \one_D\omega ).
$$
The proof is complete.
\end{proof}

\section{Localization of the essential norm}\label{sec:localization}

Assume that $\Sigma:=\supp\sigma$ is compact. Define
$$
 m_T(r)=\sup_{\substack{D\text{ axis-parallel}\\0<\ell(D)\leq r}}\|\one_D^\omega T\one_D^\sigma\|,
 \qquad m_T(0)=\lim_{r\to0^+}m_T(r).
$$
We prove $\|T\|_{\ess}=m_T(0)$ using continuous multipliers for the upper bound and the vanishing of the common-atom diagonal
for the lower bound.

\begin{lemma}
For every $r>0$, the kernel away from the diagonal is square integrable:
\begin{equation}\label{eq:HS-away}
 \iint_{|x-y|\geq r}|\bfK^{\alpha,n}(x,y)|^2\,d\sigma(y)\,d\omega(x)<\infty.
\end{equation}
\end{lemma}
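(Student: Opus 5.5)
The plan is to split the integral into a near part and a far part. Since $|\bfK^{\alpha,n}(x,y)|^2=c_{\alpha,n}^2|x-y|^{-2d}$, the near region $\{r\leq|x-y|\}$ with $\dist(x,\Sigma)\leq A$ is easy. There the kernel is bounded by $r^{-2d}$. The $x$-integration runs over the bounded set $\{\dist(x,\Sigma)\leq A\}$, which has finite $\omega$-measure by local finiteness. Hence this part is at most $c^2r^{-2d}\sigma(\Sigma)\,\omega(\{\dist(x,\Sigma)\leq A\})<\infty$.

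The far region is where $\dist(x,\Sigma)>A$, with $A$ large compared to $\diam\Sigma$. For such $x$ and $y\in\Sigma$ we have $|x-y|\approx|x|$. The contribution is therefore comparable to $\sigma(\Sigma)\int_{|x|>A'}|x|^{-2d}\,d\omega(x)$. So everything reduces to the tail estimate announced in the outline,
$$
\int_{|x|>A'}|x|^{-2d}\,d\omega(x)<\infty .
$$
If $\sigma=0$ the statement is trivial, so we assume $\sigma\neq0$.

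To prove the tail estimate, I would use the one-tailed Muckenhoupt constant. By \cref{prop:bounded-smooth}, finiteness of $\mathfrak N_s(\sigma,\omega)$ gives $\Theta(\sigma,\omega)<\infty$, and in particular $\mathcal A_2^{\alpha,*}(\sigma,\omega)=\mathcal A_2^\alpha(\omega,\sigma)<\infty$. Take an axis-parallel cube $Q$ centered at a point of $\Sigma$ with $\ell(Q)$ so large that $\Sigma\subset Q$. Then $\sigma(Q)=\sigma(\Sigma)>0$, and the definition gives
$$
\frac{\sigma(Q)}{\ell(Q)^d}\int_{Q^c}\frac{\ell(Q)^d}{(\ell(Q)+|y-c_Q|)^{2d}}\,d\omega(y)\leq\mathcal A_2^{\alpha,*}<\infty .
$$
Hence $\int_{Q^c}(\ell(Q)+|y-c_Q|)^{-2d}\,d\omega<\infty$. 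Outside $Q$ we have $\ell(Q)+|y-c_Q|\approx|y|$ for $|y|$ large, which yields the tail estimate with $A'$ of order $\ell(Q)+|c_Q|$.

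The lemma is stated under the running assumptions of this section, where only $\mathfrak N_s<\infty$ and compact $\Sigma$ are used. If one wants to avoid the curve hypothesis implicit in \cref{prop:bounded-smooth}, the cited necessity argument \cite[Theorem~5(2)]{CurvePaper} gives the Muckenhoupt bound from $\mathfrak N_s$ for general measures. The same bound can also be obtained directly by testing. Take $f=\one_\Sigma$ and $x$ far away. Then $T_{\varepsilon,R,\sigma}\one_\Sigma(x)$ equals $c\int_\Sigma(y-x)|y-x|^{-d-1}\,d\sigma(y)$ once $R$ is large. This is a vector whose component along $-x/|x|$ is at least $c\,\sigma(\Sigma)|x|^{-d}/2$. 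The $L^2(\omega)$ bound on $\{|x|>A'\}$ then gives the tail estimate.

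The main obstacle is exactly this tail estimate when $\omega$ has unbounded support. The direct testing route is the most self-contained. The one point to check there is that the kernel has no cancellation at infinity: all vectors $y-x$ with $y\in\Sigma$ point in nearly the same direction when $|x|\gg\diam\Sigma$.
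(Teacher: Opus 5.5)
Your proposal is correct, and its skeleton is the paper's. You split off a bounded region, where the kernel is at most $c_{\alpha,n}r^{-d}$ and $\omega$ is finite, and you reduce the rest to the tail bound $\int_{|x|>A'}|x|^{-2d}\,d\omega(x)<\infty$.

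The paper proves that tail bound by exactly your ``direct testing'' route. With $\Sigma\subset B(0,R_0)$ and $M=\sigma(\Sigma)$, it applies \eqref{eq:off-diagonal} on $\{2R_0<|x|<N\}\times\Sigma$. It then takes the component of $T\one_\Sigma(x)$ along $-x/|x|$ to get $|T\one_\Sigma(x)|\geq c\,M|x|^{-d}$, and lets $N\to\infty$, so that $\int_{|x|>2R_0}|x|^{-2d}\,d\omega\lesssim\|T\|^2/M$. Your version with the truncations themselves is equivalent and even bypasses the weak limit. On $\{2R_0<|x|<N\}\times\Sigma$ the cutoff is identically one once $\varepsilon\leq R_0$ and $R\geq N+R_0$, and monotone convergence in $N$ finishes.

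Your Muckenhoupt route also works, but it imports the necessity of $\mathcal A_2^{\alpha,*}$ from \cite{CurvePaper} for a fact the norm bound gives in a few lines. Going through \cref{prop:bounded-smooth} as stated would also add a curve hypothesis that is not available here. The lemma serves \cref{thm:geometric}, which is stated for arbitrary compactly supported $\sigma$; you flag this yourself.

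One small correction: ``far'' must be measured against $R_0=\sup_{y\in\Sigma}|y|$, not against $\diam\Sigma$. Both $|x-y|\approx|x|$ and the bound $-\frac{x}{|x|}\cdot(y-x)\geq|x|-R_0$ depend on where $\Sigma$ sits relative to the origin.
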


\begin{proof}
The integral is zero if $\sigma=0$. Otherwise choose $R_0\geq1$ with $\Sigma\subset B(0,R_0)$ and let
$M=\sigma(\Sigma)>0$. Apply \eqref{eq:off-diagonal} on $\{2R_0<|x|<N\}\times\Sigma$ and let $N\to\infty$.
For $\omega$-almost every $x$ with $|x|>2R_0$,
\begin{align*}
 |T\one_\Sigma(x)|
 &\geq-\frac{x}{|x|}\cdot\int_\Sigma\bfK^{\alpha,n}(x,y)\,d\sigma(y)
 =c_{\alpha,n}\int_\Sigma\frac{|x|-x\cdot y/|x|}{|x-y|^{d+1}}\,d\sigma(y)\\
 &\geq c_{\alpha,n}\frac{|x|-R_0}{(|x|+R_0)^{d+1}}M
 \geq {c_{n,\alpha}M\over |x|^{d}}.
\end{align*}
Consequently,
$$
 \int_{|x|>2R_0}|x|^{-2d}\,d\omega(x)
 \leq\frac{C_{n,\alpha}}{M^2}\|T\one_\Sigma\|_2^2
 \leq\frac{C_{n,\alpha}}M\|T\|^2<\infty.
$$
Split the integral in \eqref{eq:HS-away} at $|x|=2R_0$. Since $|x-y|\geq|x|/2$ on the outer part,
\begin{align*}
 \iint_{|x-y|\geq r}|\bfK^{\alpha,n}(x,y)|^2\,d\sigma(y)\,d\omega(x)
 &\leq CM r^{-2d}\omega(\overline{B(0,2R_0)})
       +CM\int_{|x|>2R_0}|x|^{-2d}\,d\omega(x)\\
 &\leq CM r^{-2d}\omega(\overline{B(0,2R_0)})+C\|T\|^2\\
 &<\infty.
\end{align*}
The proof is complete.
\end{proof}

For an operator $A$ between the two $L^2$ spaces, define
$[A,M_\phi]=AM_\phi^\sigma-M_\phi^\omega A$, where $M_\phi^\mu f=\phi f$.

\begin{lemma}\label{lem:neighbor}
For every $\phi\in C_c(\R^n)$, the commutator $[T,M_\phi]$ is compact.
\end{lemma}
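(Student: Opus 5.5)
The plan is to discretize $\phi$ on a fine grid and split $[T,M_\phi]$ into a near-diagonal part, which is small in norm, and an off-diagonal part, which is Hilbert--Schmidt by \eqref{eq:HS-away}. Compactness then follows because the compact operators are norm closed.

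Fix $\delta>0$ and let $\mathcal G_\delta$ be the grid of axis-parallel half-open cubes of side $\delta$. Since $\Sigma$ is compact, only finitely many $P'\in\mathcal G_\delta$ meet $\Sigma$. Hence $T=\sum_{P'}T\one_{P'}^\sigma$ is a finite sum. For each such $P'$, the sum $\sum_P\one_P^\omega T\one_{P'}^\sigma$ converges strongly. Call $P,P'$ neighbors if $P'=P+\delta e$ with $e\in\{-1,0,1\}^n$. Non-neighboring grid cubes satisfy $\dist(P,P')\geq\delta$. Accordingly write
$$
[T,M_\phi]=\sum_{P\sim P'}\one_P^\omega[T,M_\phi]\one_{P'}^\sigma+\sum_{P\not\sim P'}\one_P^\omega[T,M_\phi]\one_{P'}^\sigma=:N_\delta+F_\delta .
$$

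\emph{Off-diagonal part.} Let $A=\bigcup_{P\not\sim P'}P\times P'$. Applying \eqref{eq:off-diagonal} to each separated pair, first truncated to $|x|<N$ and then letting $N\to\infty$, shows that $F_\delta$ has kernel
$$
\one_A(x,y)\bigl(\phi(y)-\phi(x)\bigr)\bfK^{\alpha,n}(x,y).
$$
This kernel is supported in $\{|x-y|\geq\delta\}$ and is bounded pointwise by $2\|\phi\|_\infty|\bfK^{\alpha,n}(x,y)|$. By \eqref{eq:HS-away} with $r=\delta$, it lies in $L^2(\omega\times\sigma)$. Thus $F_\delta$ is Hilbert--Schmidt, and in particular compact. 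Note that no support condition on $\omega$ is needed, since the lemma already controls the far region.

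\emph{Near part.} For each neighbor pair $(P,P')$, subtract the constant $\phi(c_P)$. Constants commute with $T$, so
$$
\one_P^\omega[T,M_\phi]\one_{P'}^\sigma=\one_P^\omega\bigl(T M_{\phi-\phi(c_P)}^\sigma-M_{\phi-\phi(c_P)}^\omega T\bigr)\one_{P'}^\sigma .
$$
On $P\cup P'$ the function $|\phi-\phi(c_P)|$ is at most $\varpi_\phi(2\sqrt n\,\delta)$, where $\varpi_\phi$ is the modulus of continuity of $\phi$. Hence each block has norm at most $2\varpi_\phi(2\sqrt n\,\delta)\|T\|$.

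Now fix a shift $e\in\{-1,0,1\}^n$. The operator $\sum_P\one_P^\omega B_P\one_{P+\delta e}^\sigma$ has blocks with pairwise disjoint input supports and pairwise disjoint output supports. Therefore its norm is the supremum of the block norms. Summing over the $3^n$ shifts gives
$$
\|N_\delta\|\leq 2\cdot 3^n\varpi_\phi(2\sqrt n\,\delta)\|T\|.
$$
Since $\phi\in C_c$ is uniformly continuous, $\|[T,M_\phi]-F_\delta\|\to0$ as $\delta\to0$. A norm limit of compact operators is compact, so $[T,M_\phi]$ is compact.

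The main obstacle is the rigorous justification of the kernel formula for $F_\delta$ when $\omega$ has unbounded support. The set $\bigcup_{P\not\sim P'}P$ is unbounded, while \eqref{eq:off-diagonal} is stated only for bounded separated sets. I would handle this by applying \eqref{eq:off-diagonal} to the bounded pieces $\{|x|<N\}\cap P$. The resulting truncated kernels converge in $L^2(\omega\times\sigma)$ by dominated convergence, using \eqref{eq:HS-away}. On the operator side, $\one_{\{|x|<N\}}^\omega F_\delta\to F_\delta$ strongly. These two limits identify $F_\delta$ with the limiting Hilbert--Schmidt operator.
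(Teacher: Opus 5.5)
Your proof is correct and follows essentially the same route as the paper. Both arguments use a grid of side $\delta$ and a neighbor/non-neighbor split over $3^n$ shifts, each shift costing at most $2\varpi_\phi(2\sqrt n\,\delta)\|T\|$, together with a Hilbert--Schmidt far part controlled by \eqref{eq:HS-away}. The only difference is cosmetic: the paper first writes $T=H_r+N_r$, identifying $H_r$ by pairing with bounded compactly supported $f,g$ and then using density, and only afterwards commutes with $M_\phi$, whereas you split the commutator directly and identify the far kernel by truncating to $|x|<N$.
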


\begin{proof}
For $r>0$, partition $\R^n$ into the half-open cubes $Q_a=r(a+[0,1)^n)$, $a\in\mathbb Z^n$, and define
$$
 N_{r,v}=\sum_a\one_{Q_{a+v}}^\omega T\one_{Q_a}^\sigma
 \quad(v\in\{-1,0,1\}^n),\qquad
 N_r=\sum_{v\in\{-1,0,1\}^n}N_{r,v}.
$$
Only finitely many cubes $Q_a$ meet $\Sigma$. For fixed $v$, both families of cubes are disjoint, and hence
$$
 \|N_{r,v}f\|_2^2
 =\sum_a\|\one_{Q_{a+v}}^\omega T\one_{Q_a}^\sigma f\|_2^2
 \leq\|T\|^2\sum_a\|\one_{Q_a}^\sigma f\|_2^2
 =\|T\|^2\|f\|_2^2.
$$
For the remaining cube pairs, define $H_r$ by the kernel
$$
 h_r(x,y)=\sum_{|a-b|_\infty\geq2}
     \one_{Q_b}(x)\bfK^{\alpha,n}(x,y)\one_{Q_a}(y).
$$
The products $Q_b\times Q_a$ in this sum are disjoint and lie in $\{|x-y|\geq r\}$. By \eqref{eq:HS-away},
$$
 \|H_r\|_{\HS}^2
 =\iint|h_r(x,y)|^2\,d\sigma(y)\,d\omega(x)
 \leq\iint_{|x-y|\geq r}|\bfK^{\alpha,n}(x,y)|^2\,d\sigma(y)\,d\omega(x)<\infty.
$$
For bounded, compactly supported $f,g$, the sums below are finite, and \eqref{eq:off-diagonal} implies
\begin{align*}
 \langle(T-N_r)f,g\rangle
 &=\sum_{|a-b|_\infty\geq2}\langle \one_{Q_b}^\omega T\one_{Q_a}^\sigma f,g\rangle\\
 &=\sum_{|a-b|_\infty\geq2}\int_{Q_b}\int_{Q_a}
       \bfK^{\alpha,n}(x,y)f(y)\cdot\overline{g(x)}\,d\sigma(y)\,d\omega(x)\\
 &=\langle H_rf,g\rangle.
\end{align*}
Thus $T=H_r+N_r$ by density.

Let $\varpi_\phi(t)=\sup_{|x-y|\leq t}|\phi(x)-\phi(y)|$, which tends to zero as $t\to0^+$.
Let $B_{a,v}=\one_{Q_{a+v}}^\omega T\one_{Q_a}^\sigma$ and $c_a=\phi(ra)$. Since both cubes lie within distance
$2\sqrt n\,r$ of $ra$,
\begin{align*}
 \|[B_{a,v},M_\phi]\|
 &=\|B_{a,v}(M_\phi^\sigma-c_aI)-(M_\phi^\omega-c_aI)B_{a,v}\|
 \leq2\varpi_\phi(2\sqrt n\,r)\|B_{a,v}\|
 \leq2\varpi_\phi(2\sqrt n\,r)\|T\|.
\end{align*}
Multiplication preserves supports. For fixed $v$, the cubes $Q_a$ are disjoint, as are the cubes $Q_{a+v}$, and hence
\begin{align*}
 \|[T,M_\phi]-[H_r,M_\phi]\|
 &\leq\sum_{v\in\{-1,0,1\}^n}\|[N_{r,v},M_\phi]\|
 \leq2\cdot3^n\varpi_\phi(2\sqrt n\,r)\|T\|\to0.
\end{align*}
Each $[H_r,M_\phi]$ is Hilbert--Schmidt, so $[T,M_\phi]$ is compact.
\end{proof}

\begin{theorem}\label{thm:geometric}
If $\sigma$ has compact support, then
$$
 \|T\|_{\ess}=m_T(0).
$$
In particular, $T$ is compact if and only if $m_T(r)\to0$ as $r\to0^+$.
\end{theorem}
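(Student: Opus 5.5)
We prove the two inequalities $\|T\|_{\ess}\leq m_T(r)$ for every $r>0$ and $m_T(0)\leq\|T\|_{\ess}$ separately, following the outline in the Introduction.

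\emph{Upper bound.} Fix $r>0$. Cover the compact set $\Sigma$ by finitely many axis-parallel open cubes $D_1^\circ,\dots,D_N^\circ$ of side length $\leq r$. Choose $\phi_i\in C_c(D_i^\circ)$ with $0\le\phi_i\le1$ and $\sum_i\phi_i^2=1$ on a neighbourhood of $\Sigma$; this is a standard partition of unity after taking square roots. Since $T=T\one_\Sigma^\sigma$, we have $T=\sum_i T M_{\phi_i}^2$ on $L^2(\sigma)$. By \cref{lem:neighbor}, modulo compact operators,
\[
 T\equiv\sum_i M_{\phi_i}^\omega T M_{\phi_i}^\sigma=\sum_i M_{\phi_i}^\omega T_{D_i}M_{\phi_i}^\sigma .
\]
Each factor $\phi_i$ is supported in $D_i$, so $M_{\phi_i}T=M_{\phi_i}\one_{D_i}T$. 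For $f\in L^2(\sigma)$ and $g\in L^2(\omega)$,
\[
 \Bigl|\sum_i\langle T_{D_i}\phi_if,\phi_ig\rangle\Bigr|\le m_T(r)\sum_i\|\phi_if\|\|\phi_ig\|\le m_T(r)\Bigl(\sum_i\|\phi_if\|^2\Bigr)^{1/2}\Bigl(\sum_i\|\phi_ig\|^2\Bigr)^{1/2}.
\]
The first factor equals $\|f\|$ because $\sum_i\phi_i^2=1$ on $\Sigma$. The second is at most $\|g\|$ because $\sum_i\phi_i^2\le1$ everywhere. Hence $\|T\|_{\ess}\le m_T(r)$, and letting $r\to0$ gives $\|T\|_{\ess}\le m_T(0)$.

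\emph{Lower bound.} Let $K$ be compact and $\delta>0$. Choose axis-parallel cubes $D_j$ with $\ell(D_j)\to0$ and $\|T_{D_j}\|\ge m_T(0)-\delta$. The cubes meeting $\Sigma$ have centres in a bounded set, so after passing to a subsequence they accumulate at a point $p$. We may assume each $D_j$ meets $\Sigma$, since otherwise $T_{D_j}=0$.

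Set $D_j'=D_j\setminus\{p\}$ and let $P_j$ denote the projections $\one_{D_j'}$ on either space. Then $P_j\to0$ strongly, because $\bigcap_{k\ge j}D_k'$ decreases to the empty set in measure for every sequence tending to $p$. More precisely, $\one_{D_j'}\to0$ pointwise, so dominated convergence applies. The same holds for the adjoints. Consequently $\|P_jKP_j\|\to0$: the operator $KP_j$ tends to zero in norm because $K$ is compact and $P_j\to0$ strongly, and $P_j$ is a contraction.

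It therefore suffices to show $\|T_{D_j}-P_jTP_j\|\to0$. Writing $\one_{D_j}=P_j+\one_{\{p\}}$ on both sides, we get
\[
 T_{D_j}-P_jTP_j=\one^\omega_{\{p\}}T\one^\sigma_{\{p\}}+\one^\omega_{\{p\}}TP_j+P_jT\one^\sigma_{\{p\}} .
\]
The first term vanishes. If $p$ is a common atom this is \eqref{eq:zero-atomic-diagonal}; otherwise one of the two spaces $L^2(\one_{\{p\}}\sigma)$, $L^2(\one_{\{p\}}\omega)$ is zero. The operator $T\one^\sigma_{\{p\}}$ has rank at most one with fixed range vector $T\one_{\{p\}}$, so $\|P_jT\one^\sigma_{\{p\}}\|\le\|\one_{D_j'}T\one_{\{p\}}\|_{L^2(\omega)}/\sqrt{\sigma(\{p\})}\to0$ by dominated convergence. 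Dually, $\one^\omega_{\{p\}}TP_j=(P_jT^*\one_{\{p\}}^\omega)^*$ has rank at most one and tends to zero similarly.

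Combining these estimates,
\[
 \|T-K\|\ge\|P_j(T-K)P_j\|\ge\|T_{D_j}\|-o(1)\ge m_T(0)-\delta-o(1).
\]
Letting $j\to\infty$ and then $\delta\to0$ gives $m_T(0)\le\|T\|_{\ess}$. The compactness criterion follows directly, since $\|T\|_{\ess}=0$ exactly when $T$ is compact.

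The main obstacle is the atom at the limit point, which blocks the naive strong convergence $\one_{D_j}\to0$. The splitting above, which combines the vanishing atomic diagonal with the rank-one mixed terms, is the device that handles it.
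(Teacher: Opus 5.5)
Your proposal is correct and follows the paper's proof essentially step for step: the square-root partition of unity together with \cref{lem:neighbor} gives $\|T\|_{\ess}\le m_T(r)$. For the reverse inequality, you remove the limit point $p$ from the shrinking cubes and use \eqref{eq:zero-atomic-diagonal} together with the finite-rank mixed terms, exactly as the paper does.

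Only cosmetic repairs are needed:
\begin{itemize}
\item Write $\one_{D_j}=P_j+\one_{D_j}(p)\one_{\{p\}}$, so that the case $p\notin D_j$ is covered.
\item Note that $\one^\omega_{\{p\}}$ has rank up to $n$ on $\mathbb C^n$-valued functions, not rank one.
\item Record $\sum_i\phi_i^2\le1$ explicitly as part of the partition construction, since you use it.
\item Justify $\|P_jKP_j\|\to0$ through $\|P_jK\|\to0$, which holds because strong convergence is uniform on the relatively compact set $K(\text{unit ball})$. Strong convergence $S_j\to0$ alone does not force $\|KS_j\|\to0$ unless the adjoints also converge strongly.
\end{itemize}
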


\begin{proof}
The case $\sigma=0$ is trivial. Fix $r>0$. Choose finitely many half-open axis-parallel cubes $D_i$,
with $\ell(D_i)\leq r$, and nonnegative $\psi_i\in C_c^\infty(\operatorname{int}D_i)$ such that
$q:=\sum_i\psi_i^2>0$ on a neighborhood of $\Sigma$. Choose $\chi\in C_c^\infty(\{q>0\})$ with
$0\leq\chi\leq1$ and $\chi=1$ on a neighborhood of $\Sigma$. Define
$$
 \phi_i=\frac{\chi\psi_i}{\sqrt q}\quad\text{on }\{q>0\},
 \qquad \phi_i=0\quad\text{elsewhere}.
$$
Then $\phi_i\in C_c^\infty(\operatorname{int}D_i)$ and
$$
 \sum_i\phi_i^2=\chi^2\leq1\quad\text{on }\R^n,
 \qquad \sum_i\phi_i^2=1\quad\text{on }\Sigma.
$$
Define $A_r=\sum_iM_{\phi_i}^\omega TM_{\phi_i}^\sigma$. By \cref{lem:neighbor},
$$
 T-A_r=\sum_i[T,M_{\phi_i}]M_{\phi_i}^\sigma
$$
is compact. The supports of $\phi_i$ and Cauchy--Schwarz imply
\begin{align*}
 \|A_rf\|_2^2
 &=\int\left|\sum_i\phi_i(x)T_{D_i}(\phi_if)(x)\right|^2\,d\omega(x)
 \leq\int\biggl(\sum_i\phi_i(x)^2\biggr)
       \sum_i|T_{D_i}(\phi_if)(x)|^2\,d\omega(x)\\
 &\leq\sum_i\|T_{D_i}(\phi_if)\|_2^2
 \leq m_T(r)^2\sum_i\|\phi_if\|_2^2
 =m_T(r)^2\|f\|_2^2.
\end{align*}
Consequently $\|T\|_{\ess}\leq\|A_r\|\leq m_T(r)$, and letting $r\to0^+$ proves the upper bound.

For the lower bound, the case $m_T(0)=0$ is trivial. If $m_T(0)>0$, choose cubes $D_j$ with
$$
 \ell(D_j)\leq j^{-1},\qquad
 m_T(j^{-1})-j^{-1}\leq\|T_{D_j}\|\leq m_T(j^{-1}).
$$
Since $\sigma(D_j)>0$ for large $j$ and $\Sigma$ is compact, the centers remain bounded. Pass to a subsequence with
$c_{D_j}\to p$, and let $\delta_j=\one_{D_j}(p)$. Define
$$
 A_j=\one_{D_j\setminus\{p\}}^\omega=\one_{D_j}^\omega-\delta_j\one_{\{p\}}^\omega,
 \qquad
 B_j=\one_{D_j\setminus\{p\}}^\sigma=\one_{D_j}^\sigma-\delta_j\one_{\{p\}}^\sigma.
$$
Since every $x\ne p$ eventually lies outside $D_j$, dominated convergence implies $A_j,B_j\to0$ strongly.
The block $\one_{\{p\}}^\omega T\one_{\{p\}}^\sigma$ is zero by \eqref{eq:zero-atomic-diagonal} at a common atom,
otherwise one projection is zero. Expanding $T_{D_j}$ yields
$$
 T_{D_j}-A_jTB_j
 =\delta_j\one_{\{p\}}^\omega TB_j+\delta_jA_jT\one_{\{p\}}^\sigma.
$$
The operators $T^*\one_{\{p\}}^\omega$ and $T\one_{\{p\}}^\sigma$ have finite rank. Since $B_j=B_j^*$,
$$
 \|T_{D_j}-A_jTB_j\|
 \leq\|B_jT^*\one_{\{p\}}^\omega\|+\|A_jT\one_{\{p\}}^\sigma\|\to0.
$$
For compact $K$ and any finite-rank $F$, strong convergence on the range of $F$ shows that
$$
 \limsup_j\|A_jK\|
 \leq\|K-F\|+\lim_j\|A_jF\|=\|K-F\|.
$$
Approximating $K$ by finite-rank operators yields $\|A_jK\|\to0$, and hence
\begin{align*}
 m_T(0)
 &=\lim_j\|T_{D_j}\|=\lim_j\|A_jTB_j\|\\
 &\leq\limsup_j\bigl(\|A_j(T-K)B_j\|+\|A_jKB_j\|\bigr)\\
 &\leq\|T-K\|+\lim_j\|A_jK\|\\
 &=\|T-K\|.
\end{align*}
Taking the infimum over compact $K$ proves the lower bound.
\end{proof}

\section{Essential norm and convergence of truncations}\label{sec:criterion}

\begin{proof}[Proof of \eqref{eq:main-essential}]
Applying \eqref{eq:bounded-smooth} and \eqref{eq:compression-truncation} to $( \one_D\sigma , \one_D\omega )$, we have
$$
 \Theta( \one_D\sigma , \one_D\omega )\approx\mathfrak N_s( \one_D\sigma , \one_D\omega )\approx\|T_D\|,
$$
uniformly in $D$. In particular, $\mathfrak V(r)\lesssim\|T\|$.
Taking suprema over $\ell(D)\leq r$ and then letting $r\to0^+$, we obtain from \cref{thm:geometric}
\begin{align*}
 \mathfrak V_0
 &=\lim_{r\to0^+}\sup_{\ell(D)\leq r}\Theta( \one_D\sigma , \one_D\omega )
 \approx\lim_{r\to0^+}\sup_{\ell(D)\leq r}\|T_D\|
 =m_T(0)=\|T\|_{\ess}.
\end{align*}
The proof is complete.
\end{proof}

For the individual Muckenhoupt and testing bounds, define
$$
 \mathfrak M_{\mathrm{loc}}(r)=\sup_{\ell(D)\leq r}\mathfrak M_2^\alpha( \one_D\sigma , \one_D\omega ),
 \qquad
 \mathfrak T_{\mathrm{loc}}(r)=\sup_{\ell(D)\leq r}\mathfrak T( \one_D\sigma , \one_D\omega ),
$$
and define $\mathfrak T_{\mathrm{loc}}^{\mathrm{dual}}(r)$ in the same way. Then
$$
 \mathfrak V(r)
 \leq\sqrt{\mathfrak M_{\mathrm{loc}}(r)}
       +\mathfrak T_{\mathrm{loc}}(r)+\mathfrak T_{\mathrm{loc}}^{\mathrm{dual}}(r)
 \leq3\mathfrak V(r).
$$
These quantities are monotone in $r$. Their limits at zero satisfy
$$
 \|T\|_{\ess}\approx\sqrt{\mathfrak M_{\mathrm{loc}}(0)}
       +\mathfrak T_{\mathrm{loc}}(0)+\mathfrak T_{\mathrm{loc}}^{\mathrm{dual}}(0).
$$
Thus compactness is equivalent to vanishing of the four local Muckenhoupt constants and the two local testing quantities.

The necessity argument in \cref{prop:bounded-smooth} applies to arbitrary compactly supported $\sigma$.
Combining that argument with \eqref{eq:compression-truncation} yields
$$
 \Theta( \one_D\sigma , \one_D\omega )\leq C_{n,\alpha}\mathfrak N_s( \one_D\sigma , \one_D\omega )
 \leq C_{n,\alpha}C_\eta\|T_D\|.
$$
Taking suprema over $\ell(D)\leq r$ and letting $r\to0^+$, we conclude from \cref{thm:geometric} that
$\mathfrak V_0\leq C_{n,\alpha}C_\eta\|T\|_{\ess}$.
The curve hypothesis is used only in the converse, through the boundedness theorem.

\subsection{Norm convergence in the compact case}

For a bounded $S:L^2(\sigma)\to L^2(\omega;\mathbb C^n)$ and every common atom $p$, \eqref{eq:Fourier-Schur} implies
$$
 \one_{\{p\}}^\omega\mathfrak S_{\chi_{\varepsilon,R}}(S)\one_{\{p\}}^\sigma
 =\chi_{\varepsilon,R}(0)\one_{\{p\}}^\omega S\one_{\{p\}}^\sigma=0.
$$
Thus a nonzero common-atom diagonal prevents these cutoffs from converging to $S$ in norm, even when $S$ is compact.
The common-atom set $\mathfrak P$ is countable. Let $e_p^\sigma=\one_{\{p\}}/\sqrt{\sigma(\{p\})}$ for
$p\in\mathfrak P$. Define the atomic diagonal by
$$
 \mathcal D(S)f=\sum_{p\in\mathfrak P}
       \ip{f}{e_p^\sigma}\one_{\{p\}}^\omega Se_p^\sigma.
$$
Orthogonality in the source and the disjoint target supports imply
$$
 \|\mathcal D(S)f\|^2
 =\sum_{p\in\mathfrak P}|\ip{f}{e_p^\sigma}|^2\|\one_{\{p\}}^\omega Se_p^\sigma\|^2
 \leq\sup_{p\in\mathfrak P}\|\one_{\{p\}}^\omega Se_p^\sigma\|^2\|f\|^2.
$$
Testing on $e_p^\sigma$ proves
\begin{equation}\label{eq:diagonal-contraction}
 \|\mathcal D(S)\|=\sup_{p\in\mathfrak P}\|\one_{\{p\}}^\omega Se_p^\sigma\|\leq\|S\|.
\end{equation}
Moreover, $\one_{\{p\}}^\omega\mathcal D(S)e_p^\sigma=\one_{\{p\}}^\omega Se_p^\sigma$, so
$\mathcal D^2(S)=\mathcal D(S)$. Empty sums and suprema are zero.

\begin{lemma}\label{lem:compact-diagonal}
For compact $K$, both $\mathcal D(K)$ and $K_0=K-\mathcal D(K)$ are compact, and $\mathcal D(K_0)=0$.
For $T$,
\begin{equation}\label{eq:compact-correction}
 \|T-K_0\|\leq2\|T-K\|.
\end{equation}
\end{lemma}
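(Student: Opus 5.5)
The argument has two parts: compactness of the diagonal pieces, and the estimate \eqref{eq:compact-correction}. For the first part, write $\mathcal D(K)$ as a norm limit of finite-rank operators. Enumerate $\mathfrak P=\{p_1,p_2,\dots\}$ (it is countable) and let $\mathcal D_N(K)$ be the partial sum over $p_1,\dots,p_N$. Each term $f\mapsto\ip{f}{e_p^\sigma}\one_{\{p\}}^\omega Ke_p^\sigma$ has rank at most one, so $\mathcal D_N(K)$ has finite rank. By the same orthogonality computation used for \eqref{eq:diagonal-contraction},
$$
 \|\mathcal D(K)-\mathcal D_N(K)\|=\sup_{m>N}\|\one_{\{p_m\}}^\omega Ke_{p_m}^\sigma\|\le\sup_{m>N}\|Ke_{p_m}^\sigma\|.
$$
The vectors $e_{p_m}^\sigma$ are orthonormal, so $e_{p_m}^\sigma\to0$ weakly. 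Since $K$ is compact, $\|Ke_{p_m}^\sigma\|\to0$, and the supremum tends to zero. Hence $\mathcal D(K)$ is compact, and so is $K_0=K-\mathcal D(K)$. The map $\mathcal D$ is linear and idempotent ($\mathcal D^2=\mathcal D$, noted above), so $\mathcal D(K_0)=\mathcal D(K)-\mathcal D^2(K)=0$.

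For \eqref{eq:compact-correction}, the key input is that $T$ has vanishing atomic diagonal. By \eqref{eq:zero-atomic-diagonal}, $\one_{\{p\}}^\omega Te_p^\sigma=0$ for every $p\in\mathfrak P$, so $\mathcal D(T)=0$. Since $\mathcal D$ is linear,
$$
 T-K_0=(T-K)+\mathcal D(K)=(T-K)-\mathcal D(T-K).
$$
Applying \eqref{eq:diagonal-contraction} with $S=T-K$ gives $\|\mathcal D(T-K)\|\le\|T-K\|$, and the triangle inequality yields $\|T-K_0\|\le2\|T-K\|$.

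The main point needing care is the convergence of the infinite sum defining $\mathcal D(K)$, i.e. the tail estimate above. The same orthogonality computation handles it, applied to the sum restricted to $m>N$: the source vectors $e_{p_m}^\sigma$ are orthonormal and the target pieces have disjoint supports. Everything else is linearity together with \eqref{eq:zero-atomic-diagonal}.
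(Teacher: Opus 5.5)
Your proposal is correct and follows the paper's proof essentially step for step. Both arguments truncate the atomic diagonal to finitely many atoms and control the tail by $\sup\|Ke_p^\sigma\|\to0$. Both use idempotence of $\mathcal D$ to get $\mathcal D(K_0)=0$. Both deduce the estimate from $\mathcal D(T)=0$, the identity $T-K_0=(T-K)-\mathcal D(T-K)$, and the contraction bound for $\mathcal D$.
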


\begin{proof}
Along distinct atoms, $e_p^\sigma\to0$ weakly, so compactness of $K$ implies $\|Ke_p^\sigma\|\to0$.
Given $\varepsilon>0$, choose finite $F\subset\mathfrak P$ such that
$\sup_{p\in\mathfrak P\setminus F}\|Ke_p^\sigma\|\leq\varepsilon$. The finite sum
$\mathcal D_F(K)$ has finite rank, and
$$
 \|\mathcal D(K)-\mathcal D_F(K)\|
 =\sup_{p\in\mathfrak P\setminus F}\|\one_{\{p\}}^\omega Ke_p^\sigma\|
 \leq\sup_{p\in\mathfrak P\setminus F}\|Ke_p^\sigma\|\leq\varepsilon.
$$
Thus $\mathcal D(K)$ and $K_0$ are compact, with
$$
 \mathcal D(K_0)=\mathcal D(K)-\mathcal D^2(K)=0.
$$
Using $\mathcal D(T)=0$ from \eqref{eq:zero-atomic-diagonal} and the contraction \eqref{eq:diagonal-contraction},
we estimate
$$
 \|T-K_0\|
 =\|(T-K)-\mathcal D(T-K)\|
 \leq\|T-K\|+\|\mathcal D(T-K)\|\leq2\|T-K\|.
$$
The proof is complete.
\end{proof}

\begin{lemma}\label{lem:cutoff-compact}
For every compact $K:L^2(\sigma)\to L^2(\omega;\mathbb C^n)$,
$$
 \mathfrak S_{\chi_{\varepsilon,R}}(K)\to K-\mathcal D(K)
 \quad\text{in operator norm}
$$
as $\varepsilon\to0^+$ and $R\to\infty$.
\end{lemma}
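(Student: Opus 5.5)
The plan is to reduce to finite-rank operators and then pass to general compact $K$ using the uniform bound of \cref{lem:Schur}. Both $K\mapsto\mathfrak S_{\chi_{\varepsilon,R}}(K)$ and $K\mapsto K-\mathcal D(K)$ are linear. The first has norm at most $C_\eta$ uniformly in $0<\varepsilon<R$, and the second has norm at most $2$ by \eqref{eq:diagonal-contraction}. Given compact $K$ and $\delta>0$, choose a finite-rank $F$ with $\|K-F\|\leq\delta$. Then
$$
 \|\mathfrak S_{\chi_{\varepsilon,R}}(K)-(K-\mathcal D(K))\|
 \leq (C_\eta+2)\delta+\|\mathfrak S_{\chi_{\varepsilon,R}}(F)-(F-\mathcal D(F))\|.
$$
So it suffices to treat $F$. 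We may further approximate $F$ in norm by finite sums of rank-one operators $f\mapsto\ip{f}{u}v$, with $u\in L^2(\sigma)$ and $v\in L^2(\omega;\mathbb C^n)$ bounded and compactly supported. By linearity it is enough to handle one such rank-one operator $S$. For this $S$ the kernel is $k(x,y)=v(x)\overline{u(y)}$, which is square integrable with respect to $\omega\times\sigma$. The kernel formula after \eqref{eq:Fourier-Schur} then identifies $\mathfrak S_{\chi_{\varepsilon,R}}(S)$ as the operator with kernel $\chi_{\varepsilon,R}(x-y)k(x,y)$.

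Next I would identify the limit kernel. Since $\chi_{\varepsilon,R}(x-y)\to\one_{\{x\ne y\}}$ pointwise as $\varepsilon\to0^+$ and $R\to\infty$, with $0\leq\chi_{\varepsilon,R}\leq1$, dominated convergence gives
$$
 \|\chi_{\varepsilon,R}(x-y)k-\one_{\{x\neq y\}}k\|_{L^2(\omega\times\sigma)}\to0,
$$
independently in both parameters. Hilbert--Schmidt norm dominates operator norm, so $\mathfrak S_{\chi_{\varepsilon,R}}(S)$ converges in norm to the operator with kernel $\one_{\{x\ne y\}}k$.

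It remains to check that this operator equals $S-\mathcal D(S)$. The diagonal $\{x=y\}$ has $\omega\times\sigma$ measure $\sum_p\omega(\{p\})\sigma(\{p\})$, where only the common atoms $p\in\mathfrak P$ contribute. Therefore the operator with kernel $\one_{\{x=y\}}k$ acts by
$$
 f\mapsto\sum_{p\in\mathfrak P}\one_{\{p\}}(x)k(p,p)f(p)\sigma(\{p\}).
$$
This is exactly $\mathcal D(S)f$, since $\one_{\{p\}}^\omega Se_p^\sigma=\one_{\{p\}}k(p,p)\sqrt{\sigma(\{p\})}$ and $\ip{f}{e_p^\sigma}=f(p)\sqrt{\sigma(\{p\})}$. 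This last identification is the only delicate point. It is a Fubini computation on a countable set, and square integrability of $k$ justifies exchanging the sum and the integrals.

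Finally, the reduction from $F$ to rank-one pieces with bounded compactly supported factors again uses the uniform bounds $C_\eta$ and $2$. It also uses density of such $u,v$ together with the estimate $\|\ip{\cdot}{u}v-\ip{\cdot}{u'}v'\|\leq\|u-u'\|\|v\|+\|u'\|\|v-v'\|$. Putting the pieces together, for every $\delta>0$ we get
$$
 \limsup\|\mathfrak S_{\chi_{\varepsilon,R}}(K)-K+\mathcal D(K)\|\leq(C_\eta+2)\delta,
$$
with the limit superior taken as $\varepsilon\to0^+$ and $R\to\infty$. Letting $\delta\to0$ proves the claim.
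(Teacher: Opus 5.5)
Your proposal is correct and follows essentially the paper's argument. You reduce to finite rank via the uniform bound $C_\eta$ of \cref{lem:Schur} and $\|S-\mathcal D(S)\|\leq 2\|S\|$, then use the kernel formula, dominated convergence for $\chi_{\varepsilon,R}(x-y)\to\one_{\{x\ne y\}}$, and Tonelli to identify the diagonal part of the kernel with $\mathcal D(F)$. The extra reduction to rank-one pieces with bounded, compactly supported factors is harmless but unnecessary, since the paper runs the same computation directly on the square-integrable kernel $\sum_j u_j(x)\overline{v_j(y)}$ of a general finite-rank operator.
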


\begin{proof}
A finite-rank operator $F$ has a square-integrable kernel
$$
 k_F(x,y)=\sum_{j=1}^N u_j(x)\overline{v_j(y)},
 \qquad u_j\in L^2(\omega;\mathbb C^n),\quad v_j\in L^2(\sigma).
$$
For nonnegative Borel $h$, Tonelli's theorem gives
$$
 \iint_{x=y}h(x,y)\,d\sigma(y)\,d\omega(x)
 =\int h(x,x)\sigma(\{x\})\,d\omega(x)
 =\sum_{p\in\mathfrak P}h(p,p)\sigma(\{p\})\omega(\{p\}).
$$
The diagonal kernel therefore represents $\mathcal D(F)$. Since $\chi_{\varepsilon,R}=1$ for
$\varepsilon\leq|x-y|\leq R$ and vanishes at $x=y$,
\begin{align*}
 &\|\mathfrak S_{\chi_{\varepsilon,R}}(F)-F+\mathcal D(F)\|_{\HS}^2\\
 &=\iint|\chi_{\varepsilon,R}(x-y)-\one_{\{x\ne y\}}|^2|k_F(x,y)|^2\,d\sigma(y)\,d\omega(x)\displaybreak[1]\\
 &\leq\iint_{0<|x-y|<\varepsilon}|k_F(x,y)|^2\,d\sigma(y)\,d\omega(x)
 +\iint_{|x-y|>R}|k_F(x,y)|^2\,d\sigma(y)\,d\omega(x)\\
 &\to0.
\end{align*}
The two integrals tend to zero independently by dominated convergence.

Given compact $K$ and $\delta>0$, choose finite-rank $F$ with $\|K-F\|<\delta$. By \cref{lem:Schur} and
\eqref{eq:diagonal-contraction},
\begin{align*}
 \|\mathfrak S_{\chi_{\varepsilon,R}}(K)-K+\mathcal D(K)\|
 &\leq C_\eta\|K-F\|+\|(K-F)-\mathcal D(K-F)\|
       +\|\mathfrak S_{\chi_{\varepsilon,R}}(F)-F+\mathcal D(F)\|\\
 &\leq(C_\eta+2)\delta+\|\mathfrak S_{\chi_{\varepsilon,R}}(F)-F+\mathcal D(F)\|.
\end{align*}
Let $\varepsilon\to0^+$ and $R\to\infty$ independently, then let $\delta\to0^+$.
\end{proof}

\begin{proposition}\label{prop:truncation-essential}
Each $T_{\varepsilon,R,\sigma}$ is Hilbert--Schmidt. Moreover,
$$
 \|T\|_{\ess}
 \leq\liminf_{j\to\infty}\|T-T_{2^{-j},2^j,\sigma}\|
 \leq\limsup_{j\to\infty}\|T-T_{2^{-j},2^j,\sigma}\|
 \leq2(1+C_\eta)\|T\|_{\ess}.
$$
If $T$ is compact, then $T_{\varepsilon,R,\sigma}\to T$ in operator norm as $\varepsilon\to0^+$ and
$R\to\infty$, with the two parameters varying independently.
\end{proposition}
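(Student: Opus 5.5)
The plan has three parts: the Hilbert--Schmidt property, then the lower and upper bounds for $\|T-T_{2^{-j},2^j,\sigma}\|$, and finally the compact case.

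\emph{Hilbert--Schmidt property.} This is already contained in the proof of \cref{lem:Schur}. The kernel $\chi_{\varepsilon,R}(x-y)\bfK^{\alpha,n}(x,y)$ is bounded by $C\varepsilon^{-d}$. It is supported where $|x-y|\leq 2R$ and $y\in\Sigma$, so its $L^2(\omega\times\sigma)$ norm is at most $C\varepsilon^{-2d}\sigma(\Sigma)\,\omega(\{\dist(x,\Sigma)\leq2R\})$, which is finite by local finiteness.

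\emph{Lower bound.} Each $T_{2^{-j},2^j,\sigma}$ is Hilbert--Schmidt, hence compact. Therefore $\|T\|_{\ess}\leq\|T-T_{2^{-j},2^j,\sigma}\|$ for every $j$, and taking the $\liminf$ gives the first inequality.

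\emph{Upper bound.} Fix a compact $K$ and replace it by $K_0=K-\mathcal D(K)$ from \cref{lem:compact-diagonal}. Then $K_0$ is compact, $\mathcal D(K_0)=0$, and $\|T-K_0\|\leq2\|T-K\|$. By \eqref{eq:cutoff-of-limit}, $T_{\varepsilon,R,\sigma}=\mathfrak S_{\chi_{\varepsilon,R}}(T)$. Linearity of $\mathfrak S_{\chi_{\varepsilon,R}}$ then gives
\[
T-T_{\varepsilon,R,\sigma}=(T-K_0)-\mathfrak S_{\chi_{\varepsilon,R}}(T-K_0)+\bigl(K_0-\mathfrak S_{\chi_{\varepsilon,R}}(K_0)\bigr).
\]
The first two terms have norm at most $(1+C_\eta)\|T-K_0\|\leq2(1+C_\eta)\|T-K\|$ by \cref{lem:Schur}. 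For the last term, \cref{lem:cutoff-compact} gives $\mathfrak S_{\chi_{\varepsilon,R}}(K_0)\to K_0-\mathcal D(K_0)=K_0$ in norm, so it tends to zero as $\varepsilon\to0^+$ and $R\to\infty$ independently. Hence
\[
\limsup_{j\to\infty}\|T-T_{2^{-j},2^j,\sigma}\|\leq2(1+C_\eta)\|T-K\|.
\]
Taking the infimum over compact $K$ gives the third inequality.

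\emph{Compact case.} If $T$ is compact, take $K=T$ in the argument above. The bound then holds along arbitrary independent $\varepsilon\to0^+$ and $R\to\infty$, not only along $2^{-j},2^j$, and gives $\limsup\|T-T_{\varepsilon,R,\sigma}\|\leq 0$. Equivalently, apply \cref{lem:cutoff-compact} to $T$ directly, using $\mathcal D(T)=0$ from \eqref{eq:zero-atomic-diagonal}. Either way the truncations converge to $T$ in operator norm.

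The only delicate step is handling the atomic diagonal. One cannot compare with $K$ directly, because $\mathfrak S_{\chi_{\varepsilon,R}}(K)\to K-\mathcal D(K)$ rather than $K$. Passing to $K_0$, at the cost of the factor $2$, resolves this, and it is the reason the constant is $2(1+C_\eta)$ rather than $1+C_\eta$.
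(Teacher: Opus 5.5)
Your proposal is correct and follows the paper's proof essentially step for step. The Hilbert--Schmidt bound from the compact support of $\sigma$ gives the lower inequality, and the decomposition through $K_0=K-\mathcal D(K)$, combined with \cref{lem:Schur}, \eqref{eq:compact-correction} and \cref{lem:cutoff-compact}, gives the upper bound with constant $2(1+C_\eta)$ and norm convergence in the compact case (your alternative of applying \cref{lem:cutoff-compact} directly to $T$ via $\mathcal D(T)=0$ is also valid).
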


\begin{proof}
The cutoff restricts $|x-y|$ to $[\varepsilon/2,2R]$. Since $\Sigma=\supp\sigma$ is compact,
$$
 \|T_{\varepsilon,R,\sigma}\|_{\HS}^2
 =\iint|\chi_{\varepsilon,R}(x-y)\bfK^{\alpha,n}(x,y)|^2\,d\sigma(y)\,d\omega(x)
 \leq C\varepsilon^{-2d}\sigma(\Sigma)\omega(\{x:\dist(x,\Sigma)\leq2R\})<\infty.
$$
Hence $\|T\|_{\ess}\leq\|T-T_{\varepsilon,R,\sigma}\|$.

For compact $K$, let $K_0=K-\mathcal D(K)$ as in \cref{lem:compact-diagonal}. By \eqref{eq:cutoff-of-limit} and
\eqref{eq:compact-correction},
\begin{align*}
 \|T-T_{\varepsilon,R,\sigma}\|
 &=\|(T-K_0)-\mathfrak S_{\chi_{\varepsilon,R}}(T-K_0)+K_0-\mathfrak S_{\chi_{\varepsilon,R}}(K_0)\|\\
 &\leq(1+C_\eta)\|T-K_0\|+\|K_0-\mathfrak S_{\chi_{\varepsilon,R}}(K_0)\|\\
 &\leq2(1+C_\eta)\|T-K\|+\|K_0-\mathfrak S_{\chi_{\varepsilon,R}}(K_0)\|.
\end{align*}
Taking the upper limit, applying \cref{lem:cutoff-compact} to $K_0$, and then minimizing over compact $K$, we obtain
$$
 \limsup_{\substack{\varepsilon\to0^+\\R\to\infty}}\|T-T_{\varepsilon,R,\sigma}\|
 \leq2(1+C_\eta)\inf_{K\text{ compact}}\|T-K\|=2(1+C_\eta)\|T\|_{\ess}.
$$
The proof is complete.
\end{proof}

\begin{proof}[Proof of \cref{thm:main}]
When $\sigma$ is curve-supported, \cref{prop:unique} proves uniqueness and weak convergence as
$\varepsilon\to0^+$ and $R\to\infty$. By \cref{prop:truncation-essential}, the convergence is in operator norm
when $T$ is compact.

If $\omega$ is curve-supported, let $T=(T_1,\ldots,T_n)$ and $T^\vee=(T_1^*,\ldots,T_n^*)$. For each component,
antisymmetry and the even cutoff imply $(T_{\varepsilon,R,\sigma,j})^*=-T_{\varepsilon,R,\omega,j}$.
Thus, along the same subsequence, the truncations for $(\omega,\sigma)$ converge weakly to $-T^\vee$. The source measure of this pair is supported on the curve.
The componentwise adjoint bounds imply
$\mathfrak N_s(\omega,\sigma)\leq\sqrt n\,\mathfrak N_s(\sigma,\omega)<\infty$.
By \cref{prop:unique}, the truncations for $(\omega,\sigma)$ converge weakly to a unique limit.
Taking componentwise adjoints proves the same conclusion for the original pair. To compare essential norms,
approximate each $T_j$ by a compact operator and use invariance under adjoints:
$$
 \|T^\vee\|_{\ess}
 \leq\bigg(\sum_{j=1}^n\|T_j^*\|_{\ess}^2\bigg)^{1/2}
 =\bigg(\sum_{j=1}^n\|T_j\|_{\ess}^2\bigg)^{1/2}
 \leq\sqrt n\,\|T\|_{\ess}.
$$
Applying the same inequality to $T^\vee$ proves the reverse comparison. Interchanging the measures also exchanges
the four Muckenhoupt quantities in pairs and the two testing constants, so
$$
 \|T\|_{\ess}\approx\|T^\vee\|_{\ess}
 \approx\lim_{r\to0^+}\sup_{\ell(D)\leq r}\Theta( \one_D\omega , \one_D\sigma )
 =\lim_{r\to0^+}\sup_{\ell(D)\leq r}\Theta( \one_D\sigma , \one_D\omega )=\mathfrak V_0.
$$
If $T$ is compact, so is $T^\vee$. Applying \cref{prop:truncation-essential} to $-T^\vee$ for the pair
$(\omega,\sigma)$, we obtain
\begin{align*}
 \|T_{\varepsilon,R,\sigma}-T\|^2
 &\leq\sum_{j=1}^n\|T_{\varepsilon,R,\sigma,j}-T_j\|^2
 =\sum_{j=1}^n\|T_{\varepsilon,R,\omega,j}+T_j^*\|^2
 \leq n\|T_{\varepsilon,R,\omega}+T^\vee\|^2\to0.
\end{align*}
The proof is complete.
\end{proof}

\section{Testing and examples}\label{sec:examples}

\subsection{Large-cube testing}

For distinct points $a,b$ on a line segment, let $\sigma=\delta_a$ and $\omega=\delta_b$. Then
$$
 Tf(b)=\bfK^{\alpha,n}(b,a)f(a),
$$
so $T$ has rank one. For every cube $Q$ containing both points, the normalized forward and dual testing quantities have
supremum $|\bfK^{\alpha,n}(b,a)|>0$ over truncations, independently of $\ell(Q)$.
If $\ell(D)<|a-b|/\sqrt n$, one restricted measure is zero, so $\Theta( \one_D\sigma , \one_D\omega )=0$ and $\mathfrak V_0=0$.

Large-cube testing quantities also fail to vanish for arclength measures on two short separated arcs. The bounded kernel
defines a Hilbert--Schmidt operator. Choose the arcs so that one kernel component has a fixed nonzero sign.
Then the normalized testing quantity is nonzero on every cube containing both supports.

\subsection{Projected testing conditions}

For $T=(T_1,\ldots,T_n)$, let $T^\vee=(T_1^*,\ldots,T_n^*)$.

\begin{proposition}\label{prop:projected-tests}
Let $E_M^\mu$ be increasing finite-rank orthogonal projections converging strongly to the identity on $L^2(\mu)$,
applied componentwise to vector-valued functions. Then
\begin{align*}
 \limsup_M\sup_{\substack{Q\in\mathcal Q_{\mathrm{rot}} \\\sigma(Q)>0}}
 \frac{\|(I-E_M^\omega)T\one_Q\|_2}{\sqrt{\sigma(Q)}}
 &\leq\|T\|_{\ess},\\
 \limsup_M\sup_{\substack{Q\in\mathcal Q_{\mathrm{rot}} \\\omega(Q)>0}}
 \frac{\|(I-E_M^\sigma)T^\vee\one_Q\|_2}{\sqrt{\omega(Q)}}
 &\leq\sqrt n\,\|T\|_{\ess}.
\end{align*}
In particular, both testing suprema tend to zero when $T$ is compact.
\end{proposition}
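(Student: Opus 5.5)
The plan is to compare $T$ with an arbitrary compact operator $K$ and show that the projected testing quantities for $K$ vanish uniformly in $Q$. Fix compact $K:L^2(\sigma)\to L^2(\omega;\mathbb C^n)$. Since $\|\one_Q\|_{L^2(\sigma)}=\sqrt{\sigma(Q)}$, the function $u_Q=\one_Q/\sqrt{\sigma(Q)}$ is a unit vector, and $I-E_M^\omega$ is a contraction. Hence, for every $Q$ with $\sigma(Q)>0$,
$$
 \frac{\|(I-E_M^\omega)T\one_Q\|_2}{\sqrt{\sigma(Q)}}
 \leq\|(I-E_M^\omega)(T-K)u_Q\|_2+\|(I-E_M^\omega)Ku_Q\|_2
 \leq\|T-K\|+\|(I-E_M^\omega)K\|.
$$
The right side no longer depends on $Q$. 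It therefore suffices to show that $\|(I-E_M^\omega)K\|\to0$ as $M\to\infty$. Taking $\limsup_M$ and then the infimum over compact $K$ then gives the first inequality.

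The key step is the standard fact that strongly convergent uniformly bounded operators converge uniformly on compact sets. I would argue as in the proof of \cref{thm:geometric}. Given $\delta>0$, choose a finite-rank $F$ with $\|K-F\|<\delta$. The range of $F$ is a finite-dimensional subspace of $L^2(\omega;\mathbb C^n)$. Because $E_M^\omega\to I$ strongly componentwise, $(I-E_M^\omega)\to0$ uniformly on the unit ball of that range, and so $\|(I-E_M^\omega)F\|\to0$. Then $\limsup_M\|(I-E_M^\omega)K\|\leq\delta$. Monotonicity of $E_M$ is not needed; strong convergence and $\|I-E_M\|\leq1$ suffice.

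For the dual inequality, the plan is to run the same argument on $T^\vee=(T_1^*,\ldots,T_n^*)$, viewed as an operator from $L^2(\omega)$ into $L^2(\sigma;\mathbb C^n)$. The computation above yields the bound $\|T^\vee\|_{\ess}$, with the roles of the measures exchanged. The estimate $\|T^\vee\|_{\ess}\leq\sqrt n\,\|T\|_{\ess}$ was already proved in the proof of \cref{thm:main}. That argument uses only componentwise adjoints and the invariance $\|T_j^*\|_{\ess}=\|T_j\|_{\ess}$, together with $\|T_j\|_{\ess}\leq\|T\|_{\ess}$, which holds because projecting onto a coordinate is contractive. I would cite it directly. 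Concretely, if $K=(K_1,\dots,K_n)$ is compact, then $K^\vee$ is compact and $\|T^\vee-K^\vee\|\leq(\sum_j\|T_j-K_j\|^2)^{1/2}\leq\sqrt n\|T-K\|$.

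Finally, if $T$ is compact, both essential norms vanish, so both testing suprema tend to zero. I expect no real obstacle. The only points needing care are the uniformity in $Q$, which is handled by the unit-vector normalization, and the $\sqrt n$ loss in the vector adjoint, which is already available from the proof of \cref{thm:main}.
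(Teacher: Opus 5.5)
Your proposal is correct and follows the paper's proof. Both compare $T$ with an arbitrary compact $K$, and $T^\vee$ with $K^\vee$ using $\|T^\vee-K^\vee\|\leq\sqrt n\,\|T-K\|$, bound the normalized testing quantities by $\|T-K\|+\|(I-E_M)K\|$, and use that strong convergence is uniform on compact sets. The only difference is cosmetic: the paper bounds the full norms $\|(I-E_M^\omega)T\|$ and $\|(I-E_M^\sigma)T^\vee\|$ directly, while you work with the unit vectors $u_Q$.
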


\begin{proof}
For compact $K$, the operator $K^\vee=(K_1^*,\ldots,K_n^*)$ is compact, and
\begin{align*}
 \|(I-E_M^\omega)T\|
 &\leq\|T-K\|+\|(I-E_M^\omega)K\|,\\
 \|(I-E_M^\sigma)T^\vee\|
 &\leq\sqrt n\,\|T-K\|+\|(I-E_M^\sigma)K^\vee\|.
\end{align*}
Strong convergence is uniform on compact subsets, so the final terms tend to zero.
The testing suprema are bounded by the left sides. Taking upper limits and then the infimum over compact $K$
proves the estimates.
\end{proof}

For a fixed dyadic grid, let $\langle f\rangle_Q^\mu=\mu(Q)^{-1}\int_Qf\,d\mu$ when $\mu(Q)>0$, and zero otherwise.
Define
$$
 \Delta_Q^\mu f
 =\sum_{Q'\in\operatorname{ch}(Q)}
       \bigl(\langle f\rangle_{Q'}^\mu-\langle f\rangle_Q^\mu\bigr)\one_{Q'}.
$$
Let $H^\mu$ be the orthogonal projection onto the closed span of these difference ranges. In general, $H^\mu$ may differ from $I$.
By their pairwise orthogonality, for increasing finite families $\mathcal F_M$ whose union is the grid,
$$
 S_M^\mu=H^\mu-\sum_{Q\in\mathcal F_M}\Delta_Q^\mu\to0
 \quad\text{strongly}.
$$
If $T$ is compact, orthogonality and \cref{prop:projected-tests} give
$$
 \sup_{\substack{R\text{ dyadic}\\\sigma(R)>0}}
 \frac1{\sigma(R)}
 \sum_{\substack{Q\subset R\\Q\notin\mathcal F_M}}
       \|\Delta_Q^\omega T\one_R\|_2^2
 \leq\|S_M^\omega T\|^2\to0.
$$
The same estimate for $T^\vee$ proves the dual projected condition.

\subsection{Punctured masses and examples}

For any compactly supported $\sigma$, taking $Q=D$ in the punctured
condition and applying the necessary bound from Section~\ref{sec:criterion} yields
$$
 \frac{\sqrt{\sigma(D,\mathfrak P(\sigma,\omega)\cap D)\omega(D)}}{\ell(D)^d}
 \leq\Theta( \one_D\sigma , \one_D\omega )\leq C_{n,\alpha}C_\eta\|T_D\|.
$$
Hence, for every sequence of axis-parallel cubes with $\ell(D_j)\to0$,
$$
 \limsup_j\frac{\sqrt{\sigma(D_j,\mathfrak P(\sigma,\omega)\cap D_j)\omega(D_j)}}{\ell(D_j)^d}
 \leq C_{n,\alpha}C_\eta\|T\|_{\ess}.
$$
The same assertion holds when the largest common-atom mass is subtracted from $\omega(D_j)$. When $d=1$ and both measures dominate positive multiples
of arclength on a common $C^1$ arc, these bounds exclude compactness even in the presence of atoms.
Indeed, removing a point does not change arclength measure, and cubes centered at an interior point have arclength comparable
to their side length.

For $\Gamma=[0,1]e_1$, $\sigma=\omega=ds_\Gamma$ and $\alpha=n-1$, we have $d=1$. The first component is the
finite-interval Hilbert transform, up to normalization, and the other components are zero. Extend functions by zero to
$\R$, and define $Hf(x)=\operatorname{p.v.}\int_\R f(y)/(x-y)\,dy$. Since $\chi_{\varepsilon,R}$ vanishes near zero,
$$
 \chi_{\varepsilon,R}(t)\operatorname{p.v.}\frac1t
 =\frac{\chi_{\varepsilon,R}(t)}t
 \quad\text{in }\mathcal S'(\R).
$$
Fourier inversion, first paired with Schwartz functions, identifies the truncated Hilbert transform with
$\mathfrak S_{\chi_{\varepsilon,R}}(H)$. Applying \cref{lem:Schur} in dimension one and restricting to the segment yields
$$
 \mathfrak N_s(\sigma,\omega)\leq c_{n-1,n}C_\eta\|H\|<\infty.
$$
Hence $\Theta(\sigma,\omega)<\infty$ by \cref{prop:bounded-smooth}. For small axis-parallel cubes $D$ centered at an
interior point of $\Gamma$, the measures have no atoms and
$$
 \Theta( \one_D\sigma , \one_D\omega )^2\geq A_2^{\alpha}( \one_D\sigma , \one_D\omega )
 \geq\frac{\sigma(D)\omega(D)}{\ell(D)^2}=1.
$$
Thus $\mathfrak V_0\geq1$, and \cref{thm:main} implies noncompactness.

For $0<d<1$, let $T$ denote the integral operator with kernel $\bfK^{\alpha,n}$ on the same segment.
The operator $H_r$ with kernel $\one_{\{|x-y|\geq r\}}\bfK^{\alpha,n}(x,y)$ is Hilbert--Schmidt, and the Schur test yields
$$
 \|T-H_r\|\leq2c_{\alpha,n}\int_0^r t^{-d}\,dt
 =\frac{2c_{\alpha,n}}{1-d}r^{1-d}\to0.
$$
Hence $T$ is compact.

For $d=1$, consider the two measures on $\Gamma=[-1,1]e_1$ defined by
$$
 a(t)=\frac{\one_{(-1,0)}(t)}{\sqrt{\log(e/|t|)}},\qquad b(t)=a(-t),\qquad
 d\sigma=a(t)\,dt,\quad d\omega=b(t)\,dt,
$$
with both densities zero at $t=0$. 
Fix an axis-parallel cube $D$.
For every rotated cube $Q$, its intersection with the line has length at most $\sqrt n\,\ell(Q)$, and
$$
 \int_\R\frac{dt}{(\ell(Q)+|te_1-c_Q|)^2}\leq\frac2{\ell(Q)}.
$$
Therefore, 
\[
\mathfrak M_2^\alpha(\one_D\sigma,\one_D\omega) \le C_n\|\one_Da\|_\infty \|\one_Db\|_\infty.
\]
The smooth Hilbert truncations $H_{\varepsilon,R}$ satisfy
$\sup_{\varepsilon<R}\|H_{\varepsilon,R}\|\leq C_\eta\pi$ by the Fourier estimate above.
For $E=D\cap Q\cap\Gamma$, the forward testing integral satisfies
\begin{align*}
 \int_E|c_{n-1,n}H_{\varepsilon,R}(a\one_E)|^2b\,dt
 &\leq c_{n-1,n}^2\|\one_Db\|_\infty\|H_{\varepsilon,R}(a\one_E)\|_2^2\\
 &\leq c_{n-1,n}^2C_\eta^2\pi^2\|\one_Db\|_\infty\int_Ea^2\,dt\\
 &\leq C\|\one_Da\|_\infty\|\one_Db\|_\infty\sigma(E).
\end{align*}
Interchanging $a,b$ gives the corresponding dual estimate, so $\Theta( \one_D\sigma , \one_D\omega )\leq C\|\one_Da\|_\infty^{1/ 2}\|\one_Db\|_\infty^{1/2}$.
The same estimates without restriction show $\Theta(\sigma,\omega)<\infty$.
If $D$ meets both supports and $\ell(D)\leq r$, every point in $D\cap\Gamma$ has $|t|\leq\sqrt n\,r$.
For sufficiently small $r$, taking the supremum over $\ell(D)\leq r$ yields
$$
 \mathfrak V(r)\leq C\bigl(\log(e/r)\bigr)^{-1/2}\to0.
$$
Thus \cref{thm:main} proves compactness.

The squares $[2^{-k},2^{1-k}]^2$, $k\geq2$, have disjoint interiors, and $a(-u)b(v)$ is comparable to $1/k$ on each.
Thus
\begin{align*}
 \int_0^1\int_0^1\frac{a(-u)b(v)}{(u+v)^2}\,du\,dv
 &\geq\sum_{k=2}^\infty
   \int_{2^{-k}}^{2^{1-k}}\int_{2^{-k}}^{2^{1-k}}
   \frac{a(-u)b(v)}{(u+v)^2}\,du\,dv
 \geq C\sum_{k=2}^\infty\frac1k=\infty.
\end{align*}
A Hilbert--Schmidt kernel representing $T$ would agree with the Riesz kernel on every pair of separated compact
subintervals, hence almost everywhere on the product of the two open half-segments.
The divergent integral therefore shows that $T$ is not Hilbert--Schmidt.

\section{Green gradients and Biot--Savart fields}\label{sec:compact-pde}

For the distance power measures in \cite[Corollaries~8.2--8.4]{BoundedDini}, we prove compactness throughout the
boundedness range and determine the Hilbert--Schmidt range. The curve need only be $C^1$. Applying
\cref{thm:main} requires $C^{1,\mathrm{Dini}}$ regularity.

\subsection{Green gradients}

Let $n\geq2$, let $\Omega\subset\R^n$ be a bounded $C^2$ domain, and let $\Gamma\Subset\Omega$ be a compact regular
$C^1$ curve. Let $\sigma$ be a finite positive Borel measure supported on $\Gamma$, and let $\omega$
be a finite positive Borel measure on $\overline\Omega$.

By \cite[Section~8.1]{BoundedDini}, the Dirichlet Green function satisfies
$$
 \nabla_xG_\Omega(x,y)=a_n\bfK^{1,n}(x,y)+B_\Omega(x,y),
 \qquad \sup_{\overline\Omega\times\Gamma}|B_\Omega|<\infty,
$$
where $a_n=(|\mathbb S^{n-1}|c_{1,n})^{-1}$. The bound on $B_\Omega$ depends only on $\Omega$ and
$\dist(\Gamma,\partial\Omega)$, since the boundary data for the harmonic correction are uniformly smooth.
For $\bfK^{1,n}$ and $(\sigma,\omega)$, assume that $\mathfrak N_s(\sigma,\omega)<\infty$.
Let $T$ be the weak operator limit of the smooth truncations. Define
$$
 S_\Omega f(x)=\int_\Gamma B_\Omega(x,y)f(y)\,d\sigma(y),
 \qquad \mathcal G_\Omega=a_nT+S_\Omega.
$$
For $f\in L^2(\sigma)$, consider the potential
$$
 u_f(x)=\int_\Gamma G_\Omega(x,y)f(y)\,d\sigma(y).
$$
Since $\sigma$ is finite, $f\in L^1(\sigma)$.
On compact subsets of $\Omega\setminus\Gamma$,
$G_\Omega$ and $\nabla_xG_\Omega$ are bounded uniformly
for $y\in\Gamma$, so differentiation under the integral
sign is valid. The off-diagonal identity
\eqref{eq:off-diagonal} and the decomposition of
$\nabla_xG_\Omega$ therefore give
\begin{align*}
 \nabla u_f(x)
 &=\int_\Gamma \nabla_xG_\Omega(x,y)f(y)\,d\sigma(y)
 =a_nTf(x)+S_\Omega f(x)=\mathcal G_\Omega f(x)
\end{align*}
for $\omega$-almost every $x\in\overline\Omega\setminus\Gamma$.
The gradient on $\partial\Omega$ is defined by continuous
extension, since $\Gamma\Subset\Omega$.
The equality is understood $\omega$-almost everywhere
because $Tf$ is defined as an element of
$L^2(\omega;\mathbb C^n)$.
When $\omega(\Gamma)>0$, we use $\mathcal G_\Omega$ as the
operator realization of the Green gradient on $\Gamma$.
The identity with the classical gradient above is asserted only
on $\overline\Omega\setminus\Gamma$.
This potential satisfies $-\Delta u_f=f\sigma$ in distributions
and vanishes on the boundary.

\begin{proposition}\label{prop:green}
Under these assumptions,
$$
 \|\mathcal G_\Omega\|_{\mathrm{ess}}
 =|a_n|\|T\|_{\mathrm{ess}}.
$$
Thus $\mathcal G_\Omega$ is compact if and only if $T$ is compact. For $C^{1,\mathrm{Dini}}$ curves,
\cref{thm:main} characterizes compactness of the Green gradient.
\end{proposition}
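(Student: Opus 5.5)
The plan is to show that $S_\Omega$ is compact, in fact Hilbert--Schmidt. Then $\mathcal G_\Omega-a_nT=S_\Omega$ is compact, and the essential norm, being a seminorm that vanishes on compact operators, satisfies
$$
 \|\mathcal G_\Omega\|_{\ess}=\|a_nT\|_{\ess}=|a_n|\,\|T\|_{\ess}.
$$
Indeed, for any compact $K$ the operator $K-S_\Omega$ is compact. This gives $\|\mathcal G_\Omega\|_{\ess}\leq\|a_nT\|_{\ess}$, and the reverse inequality follows symmetrically. Homogeneity $\|cA\|_{\ess}=|c|\|A\|_{\ess}$ is immediate from the definition, since $K\mapsto cK$ is a bijection of the compact operators for $c\neq0$.

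For the compactness of $S_\Omega$, use the bound $\sup_{\overline\Omega\times\Gamma}|B_\Omega|<\infty$ from \cite[Section~8.1]{BoundedDini}. This bound holds because $\Gamma\Subset\Omega$, so the harmonic correction has uniformly smooth boundary data. Since $\sigma$ and $\omega$ are finite, with $\omega$ supported in $\overline\Omega$, we get
$$
 \|S_\Omega\|_{\HS}^2=\iint|B_\Omega(x,y)|^2\,d\sigma(y)\,d\omega(x)
 \leq\sup|B_\Omega|^2\,\sigma(\Gamma)\,\omega(\overline\Omega)<\infty.
$$
Hence $S_\Omega$ is a Hilbert--Schmidt operator from $L^2(\sigma)$ to $L^2(\omega;\mathbb C^n)$, and therefore compact. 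One should check that $B_\Omega$ is Borel measurable on $\overline\Omega\times\Gamma$. This holds because it is $\nabla_xG_\Omega$ minus the explicit Riesz kernel, which is continuous off the diagonal, and the diagonal is a closed set. If $\omega(\Gamma)>0$, one uses the bounded extension of $B_\Omega$ there, and the bound is unaffected.

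The equivalence of compactness follows because $a_n\neq0$: $\mathcal G_\Omega$ is compact iff $\|\mathcal G_\Omega\|_{\ess}=0$ iff $\|T\|_{\ess}=0$ iff $T$ is compact. For $C^{1,\mathrm{Dini}}$ curves we have $\mathfrak N_s<\infty$, which is equivalent to $\Theta(\sigma,\omega)<\infty$ by \cref{prop:bounded-smooth}. Then \cref{thm:main} applies to $T$ with $\alpha=1$, giving $\|\mathcal G_\Omega\|_{\ess}\approx|a_n|\mathfrak V_0$, so compactness is equivalent to $\mathfrak V(r)\to0$. There is no real obstacle here. The only point needing care is that the uniform bound on $B_\Omega$ over all of $\overline\Omega\times\Gamma$, including points of $\Gamma$ itself, is taken from the companion paper, where it is stated.
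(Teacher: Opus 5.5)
Your proposal is correct and follows the paper's proof: the paper also bounds $\|S_\Omega\|_{\HS}^2\leq\|B_\Omega\|_\infty^2\sigma(\Gamma)\omega(\overline\Omega)$ and concludes $\|a_nT+S_\Omega\|_{\ess}=|a_n|\|T\|_{\ess}$ because $S_\Omega$ is compact. Your extra remarks on measurability of $B_\Omega$, homogeneity of the essential norm, and $a_n\neq0$ only fill in details that the paper leaves implicit.
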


\begin{proof}
The correction is Hilbert--Schmidt, since
$$
 \|S_\Omega\|_{\HS}^2
 =\int_{\overline\Omega}\int_\Gamma|B_\Omega(x,y)|^2\,d\sigma(y)\,d\omega(x)
 \leq\|B_\Omega\|_\infty^2\sigma(\Gamma)\omega(\overline\Omega)<\infty.
$$
Consequently $\|a_nT+S_\Omega\|_{\ess}=|a_n|\|T\|_{\ess}$.
\end{proof}

\subsection{Distance powers near a curve}

Let $\Gamma$ be a compact regular simple $C^1$ curve, closed or with two endpoints, and let $\sigma=ds_\Gamma$ be
arclength. Define
$$
 d_\Gamma(x)=\dist(x,\Gamma),\qquad U_{\rho_0}=\{x:d_\Gamma(x)<\rho_0\},\qquad d\omega_\beta(x)=d_\Gamma(x)^\beta\,dx.
$$
Assume $\beta>-(n-1)$, so $\omega_\beta$ is locally finite. Choose $\rho_0>0$ small. For Green gradients, require
$\overline U_{\rho_0}\subset\Omega$, where $\Omega$ is a bounded $C^2$ domain.

For $f\in L^2(ds_\Gamma)$ and $x\notin\Gamma$, define the ordinary integral
$$
 Tf(x)=\int_\Gamma\bfK^{1,n}(x,y)f(y)\,ds_\Gamma(y),\qquad
 |Tf(x)|\leq c_{1,n}d_\Gamma(x)^{1-n}\sigma(\Gamma)^{1/2}\|f\|_2.
$$
The Newtonian gradient is $a_nT$. Since $\omega_\beta(\Gamma)=0$, values on $\Gamma$ do not affect the target spaces.
Let $\nu$ be a locally finite positive measure with $\nu(\Gamma)=0$ and $\mathfrak N_s(ds_\Gamma,\nu)<\infty$.
Then this ordinary integral agrees with the weak limit of the smooth truncations in $L^2(\nu)$. Indeed,
\eqref{eq:off-diagonal} gives the equality on each $A_m=\{x:|x|\leq m,\ d_\Gamma(x)\geq1/m\}$, and
$\bigcup_{m=1}^\infty A_m=\R^n\setminus\Gamma$.

After decreasing $\rho_0$, the $C^1$ graph cover in \cite[Lemma~6.2]{BoundedDini} consists of finitely many smaller
target neighbourhoods with enlarged source arcs. The coordinates $x=F(s,z)=(s,\psi(s)+z)$ in a target neighbourhood and
$y=(t,\psi(t))$, $t\in J$, on its enlarged source arc satisfy
$$
 |x-y|\approx\bigl(|s-t|^2+|z|^2\bigr)^{1/2},\qquad
 d_\Gamma(x)\approx\bigl(\dist(s,J)^2+|z|^2\bigr)^{1/2}.
$$
Here $\det DF=1$, the arclength density is bounded above and below, and the remaining source is separated from the target neighbourhood.
At an endpoint, extend the graph by its tangent line and the source density by zero. Fix these charts before
choosing the truncation radii. The constants below may depend on the curve and this finite cover.

\begin{theorem}\label{thm:compact-distance-powers}
Let $n\geq2$, let $\Gamma$ be a nontrivial compact regular simple $C^1$ curve, and assume $\beta>-(n-1)$.
The ordinary integral operator
$$
 T:
 L^2(ds_\Gamma)\to L^2(U_{\rho_0},\omega_\beta;\mathbb C^n)
$$
is compact if and only if $\beta>n-3$. If $\Omega$ is bounded and $C^2$ with $\overline U_{\rho_0}\subset\Omega$,
the Green gradient is also compact if and only if $\beta>n-3$. On the whole space,
$$
 T:L^2(ds_\Gamma)\to L^2(\omega_\beta;\mathbb C^n)
 \quad\hbox{is compact if and only if}\quad
 n-3<\beta<n-2.
$$
There exists $\varepsilon_0>0$, depending on the fixed graph cover, such that for $\beta>n-3$ and
$0<\varepsilon<\varepsilon_0$,
$$
 \|\one_{\{d_\Gamma<\varepsilon\}}T\|^2\leq
 \begin{cases}
  C_\beta\varepsilon^{\beta+3-n},&n\geq3,\\
  C_\beta\varepsilon^{\beta+1}
       \bigl(1+\log^2(e/\varepsilon)\bigr),&n=2.
 \end{cases}
$$
In the whole-space range, for sufficiently large $R$,
$$
 \|\one_{\{|x|>R\}}T\|^2
 \leq C_\beta R^{\beta+2-n}.
$$
\end{theorem}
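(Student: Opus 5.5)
The plan is to prove sufficiency by norm approximation with Hilbert--Schmidt operators, and necessity from the boundedness ranges. For necessity, a compact operator is bounded. By \cite[Corollaries~8.2--8.4]{BoundedDini}, $T$ is bounded into $L^2(U_{\rho_0},\omega_\beta)$ only if $\beta>n-3$, and into $L^2(\omega_\beta)$ on $\R^n$ only if $n-3<\beta<n-2$. So compactness fails outside these ranges. For the Green gradient, \cref{prop:green} reduces everything to $T$, once we note that $\omega_\beta$ restricted to $\overline U_{\rho_0}$ is finite because $\beta>-(n-1)$. Hence $S_\Omega$ is Hilbert--Schmidt. Thus the content of the proof is the two quantitative tail estimates, together with the following decomposition, valid for $\varepsilon$ small and $R$ large:
$$
 T=\one_{\{d_\Gamma<\varepsilon\}}T+\one_{\{d_\Gamma\geq\varepsilon,\ |x|\leq R\}}T+\one_{\{|x|>R\}}T .
$$
On the middle region we have $|x-y|\geq d_\Gamma(x)\geq\varepsilon$, so the kernel is bounded by $c\,\varepsilon^{1-n}$. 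The region also has finite $\omega_\beta$-measure, so the middle piece is Hilbert--Schmidt. The outer pieces tend to zero in norm by the stated estimates; on the tube only the first is needed. Hence $T$ is a norm limit of Hilbert--Schmidt operators.

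For the far-field estimate, take $|x|>R\geq2\diam\Gamma+2\max_{\Gamma}|y|$. Cauchy--Schwarz gives
$$
 |Tf(x)|\leq C|x|^{1-n}\sigma(\Gamma)^{1/2}\|f\|_2 .
$$
For $|x|$ large, $d_\Gamma(x)\approx|x|$, so
$$
 \|\one_{\{|x|>R\}}T\|^2\lesssim\int_R^\infty r^{2-2n+\beta}r^{n-1}\,dr\approx R^{\beta+2-n},
$$
which is finite exactly when $\beta<n-2$.

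For the near-curve estimate, use the fixed chart cover. The source arcs separated from a given target neighbourhood contribute a bounded kernel there, and the region $\{d_\Gamma<\varepsilon\}$ has $\omega_\beta$-measure $\lesssim\varepsilon^{\beta+n-1}$; this is dominated by the bound below. In a chart, write $x=F(s,z)$ with $\det DF=1$. Then
$$
 |Tf(x)|\lesssim\int_J\frac{|f(t)|}{(|s-t|+|z|)^{n-1}}\,dt .
$$
For fixed $z$, this is a convolution in $s$ with a kernel whose $L^1(\R)$ norm is $\lesssim|z|^{2-n}$ when $n\geq3$, and $\lesssim1+\log(e/|z|)$ when $n=2$ (the source arc is bounded). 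Young's inequality bounds $\|Tf(\cdot,z)\|_{L^2(ds)}$ by this quantity times $\|f\|_2$. Away from the endpoints, $d_\Gamma\approx|z|$. Integrating $|z|^{4-2n+\beta}$ over $|z|<\varepsilon$ in $\R^{n-1}$ gives $\int_0^\varepsilon r^{\beta+2-n}\,dr\approx\varepsilon^{\beta+3-n}$, which is finite exactly when $\beta>n-3$. For $n=2$ the same computation produces the factor $\varepsilon^{\beta+1}(1+\log^2(e/\varepsilon))$. The smallness threshold $\varepsilon_0$ comes from requiring $\{d_\Gamma<\varepsilon\}$ to lie inside the chart neighbourhoods.

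I expect the main obstacle to be the endpoint regions of a non-closed curve. There the weight is $d_\Gamma^\beta\approx(\dist(s,J)^2+|z|^2)^{\beta/2}$ rather than $|z|^\beta$. For $\beta\leq0$ this weight is still dominated by $|z|^\beta$, so the bound above applies unchanged. For $\beta>0$, I would use the inequality $\dist(s,J)+|z|\lesssim|s-t|+|z|$ for $t\in J$. I would then split the target into the slab $\dist(s,J)\leq|z|$, handled as before, and the cone $\dist(s,J)>|z|$. In the cone I would use the crude bound $|Tf|\lesssim d_\Gamma^{1-n}\cdot d_\Gamma^{1/2}\|f\|_2$, obtained by Cauchy--Schwarz over $t\in J$. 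Integrating this against $d_\Gamma^\beta$ over a neighbourhood of radius $\varepsilon$ in $\R^n$ gives $\int_0^\varepsilon\rho^{3-2n+\beta}\rho^{n-1}\,d\rho=\varepsilon^{\beta+3-n}$, again finite exactly when $\beta>n-3$. Keeping these constants uniform over the fixed cover, and checking the $n=2$ logarithms in the endpoint cone, are the steps I would verify most carefully.
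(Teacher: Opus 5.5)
Your sufficiency half follows the paper's proof. You use the same split into a near-curve piece, a Hilbert--Schmidt middle piece and a far-field piece, the same Young inequality in $s$ for each fixed $z$ in a chart, and the same Cauchy--Schwarz bound $|Tf|^2\lesssim d_\Gamma^{3-2n}\|f\|_2^2$ beyond an endpoint. The paper applies that endpoint bound on the whole region beyond the endpoint for every $\beta$, since radial integration about the endpoint gives $\int_0^{C\varepsilon}\rho^{\beta+2-n}\,d\rho$ whatever the sign of $\beta$. Your separate case $\beta\le0$ and your slab/cone split are therefore correct but unnecessary.

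The genuine difference is necessity. You cite the companion boundedness ranges, whereas the paper proves unboundedness directly. Near the curve, for $n\geq3$, it tests a fixed bump $f_r$ on a short arc and compares $Tf_r(\gamma(s)+z)$ with the tangent-line integral $L_s(z)$. This uses only the $C^1$ modulus $\vartheta_\gamma$ and $\sum_j2^{-j(n-2)}<\infty$, and gives $|Tf_r|\gtrsim\rho^{2-n}$ on a thin tube around the arc, hence divergence of $\int_0\rho^{\beta+2-n}\,d\rho$ for $\beta\le n-3$. For $n=2$ the condition is automatic from $\beta>-1$. At infinity it uses $T\one_\Gamma(x)=-c_{1,n}\sigma(\Gamma)x|x|^{-n}+O(|x|^{-n})$, giving divergence of $\int^\infty r^{\beta+1-n}\,dr$ for $\beta\geq n-2$.

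Your route is shorter but only as strong as the citation. It must supply the ``only if'' direction for the ordinary integral into $L^2(U_{\rho_0},\omega_\beta)$ and for curves that are merely $C^1$, not $C^{1,\mathrm{Dini}}$. The paper's argument is self-contained at $C^1$ regularity and is reused for the Biot--Savart necessity. If you tried to replace the citation by the necessary conditions in \cref{prop:bounded-smooth}, note that at $\beta=n-3$ all four Muckenhoupt quantities stay finite on the tube. For example, $\sigma(Q)\omega_\beta(Q)/\ell(Q)^{2n-2}\lesssim\ell\cdot\ell^{2n-3}/\ell^{2n-2}=1$ with $\ell=\ell(Q)$. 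So the logarithmic endpoint genuinely needs a testing-type lower bound like the paper's.

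Finally, \cref{prop:green} is stated for the weak limit under $\mathfrak N_s<\infty$, so it cannot be invoked for necessity. What you need, in both directions, is the pointwise identity $\nabla u_f=a_nTf+S_\Omega f$ off $\Gamma$, together with $|S_\Omega f|\le\|B_\Omega\|_\infty\|f\|_1$ and $\omega_\beta(\Gamma)=0$. This transfers boundedness and compactness between $T$ and the Green gradient.
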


\begin{proof}
For $\beta>n-3$, we estimate the positive operator
$$
 Pf(x)=\int_\Gamma|x-y|^{1-n}f(y)\,ds_\Gamma(y),
$$
since $|Tf|\leq c_{1,n}P|f|$. On the interior part of a chart, let $I\subset J$ be the target interval and
$\Gamma_J=\{(t,\psi(t)):t\in J\}$. Extend $h(t)=\one_J(t)|f(t,\psi(t))|\sqrt{1+|\psi'(t)|^2}$
by zero and, for $\rho=|z|>0$, define
$$
 k_\rho(t)=\one_{[-C,C]}(t)(t^2+\rho^2)^{(1-n)/2}.
$$
The distance comparison and Young's inequality yield
$$
 \|P(\one_{\Gamma_J}|f|)(F(\cdot,z))\|_{L^2(I)}
 \leq C\|k_\rho*h\|_{L^2(\R)}
 \leq C\|k_\rho\|_1\|h\|_2\leq C\|k_\rho\|_1\|f\|_2.
$$

In an endpoint chart with $J=[0,L_J]$, let $\delta=(s^2+|z|^2)^{1/2}$ for $s<0$.
Cauchy--Schwarz and the graph distance comparison imply
$$
 |P(\one_{\Gamma_J}|f|)(F(s,z))|^2
 \leq C\|f\|_2^2\int_0^\infty(t^2+\delta^2)^{1-n}\,dt
 =C_n\delta^{3-2n}\|f\|_2^2.
$$
Since $d_\Gamma(F(s,z))\approx\delta$, radial integration in $(s,z)$ contributes the factor
$\delta^\beta\delta^{3-2n}\delta^{n-1}=\delta^{\beta+2-n}$. The separated source has bounded kernel, and
$\omega_\beta(\{d_\Gamma<\varepsilon\})\leq C_\beta\varepsilon^{\beta+n-1}$ in these coordinates.
Combining the interior, endpoint and separated-source estimates over the finite cover, we obtain
\begin{align*}
 \|\one_{\{d_\Gamma<\varepsilon\}}P\|^2
 &\leq C\int_0^{C\varepsilon}\rho^{\beta+n-2}
       \biggl\|\one_{[-C,C]}(t)(t^2+\rho^2)^{(1-n)/2}\biggr\|_{L^1(dt)}^2\,d\rho\notag\\
 &\quad+C\int_0^{C\varepsilon}\rho^{\beta+2-n}\,d\rho+C\varepsilon^{\beta+n-1}.
\end{align*}
The interior kernel norm satisfies
$$
 \int_{-C}^C(t^2+\rho^2)^{(1-n)/2}\,dt
 \leq C\left(\rho^{2-n}+\int_\rho^C t^{1-n}\,dt\right)
 \leq\begin{cases}C\rho^{2-n},&n\geq3,\\ C\log(e/\rho),&n=2.\end{cases}
$$
Consequently, for $n\geq3$,
\begin{align*}
 \|\one_{\{d_\Gamma<\varepsilon\}}P\|^2
 &\leq C\int_0^{C\varepsilon}\rho^{\beta+n-2}\rho^{4-2n}\,d\rho
       +C\int_0^{C\varepsilon}\rho^{\beta+2-n}\,d\rho+C\varepsilon^{\beta+n-1}\\
 &\leq C\int_0^{C\varepsilon}\rho^{\beta+2-n}\,d\rho+C\varepsilon^{\beta+n-1}\\
 &\leq C_\beta\varepsilon^{\beta+3-n}.
\end{align*}
For $n=2$, the substitution $\rho=\varepsilon u$ yields
\begin{align*}
 \|\one_{\{d_\Gamma<\varepsilon\}}P\|^2
 &\leq C\int_0^{C\varepsilon}\rho^\beta\log^2(e/\rho)\,d\rho+C\varepsilon^{\beta+1}\\
 &\leq C\varepsilon^{\beta+1}
       \int_0^C u^\beta\bigl(\log(e/\varepsilon)+|\log u|\bigr)^2\,du
       +C\varepsilon^{\beta+1}\\
 &\leq C_\beta\varepsilon^{\beta+1}\bigl(1+\log^2(e/\varepsilon)\bigr),
\end{align*}
since $\beta>-1$.

On the bounded tube, the operator $H_\varepsilon=\one_{U_{\rho_0}\cap\{d_\Gamma\geq\varepsilon\}}T$ is Hilbert--Schmidt, since
$$
 \|H_\varepsilon\|_{\HS}^2
 \leq c_{1,n}^2\varepsilon^{2-2n}\sigma(\Gamma)\omega_\beta(U_{\rho_0})<\infty.
$$
The preceding estimates also imply
$$
 \|T-H_\varepsilon\|\leq c_{1,n}\|\one_{\{d_\Gamma<\varepsilon\}}P\|\to0.
$$
The near-curve estimate and the bounded kernel away from $\Gamma$ show that $P$ is bounded on the tube. Since
$0\leq\chi_{\varepsilon,R}\leq1$, the smooth truncations are uniformly bounded there. For sufficiently large $R$,
the outer cutoff is one on $U_{\rho_0}\times\Gamma$, so
$$
 \|T_{\varepsilon,R,\sigma}-T\|\leq c_{1,n}\|\one_{\{d_\Gamma<\varepsilon\}}P\|\to0.
$$
Thus $T$ is the weak limit used in \cref{prop:green}, with target measure $\one_{U_{\rho_0}}\omega_\beta$.
Thus $T$ is compact, and so is $\mathcal G_\Omega=a_nT+S_\Omega$ by the Hilbert--Schmidt correction in \cref{prop:green}.

For $R>2+2\sup_{y\in\Gamma}|y|$ and $|x|>R$, the inequality $|x-y|\geq|x|/2$ implies
$$
 P|f|(x)\leq C|x|^{1-n}\int_\Gamma|f|\,ds_\Gamma
 \leq C|x|^{1-n}\sigma(\Gamma)^{1/2}\|f\|_2.
$$
For $\beta<n-2$, the comparison $d_\Gamma(x)\approx|x|$ implies
$$
 \|\one_{\{|x|>R\}}T\|^2
 \leq C\|\one_{\{|x|>R\}}P\|^2
 \leq C\int_R^\infty r^{\beta+1-n}\,dr
 =\frac{C}{n-2-\beta}R^{\beta+2-n}.
$$
The operators $H_{\varepsilon,R}=\one_{\{d_\Gamma\geq\varepsilon,\ |x|\leq R\}}T$ are Hilbert--Schmidt.
As $\varepsilon\to0^+$ and $R\to\infty$ independently,
$$
 \|T-H_{\varepsilon,R}\|^2
 \leq\|\one_{\{d_\Gamma<\varepsilon\}}T\|^2+\|\one_{\{|x|>R\}}T\|^2\to0.
$$
Thus $T$ is compact for $n-3<\beta<n-2$.

For necessity, suppose $n\geq3$. Choose an interior arclength parametrization $\gamma$ on $(-4r_*,4r_*)$,
with derivative modulus $\vartheta_\gamma$ and $\vartheta_\gamma(4r_*)\leq1/8$. For $0<r<r_*$ choose
$0\leq f_r\leq1$, equal to one on $\gamma([-2r,2r])$ and supported on $\gamma((-3r,3r))$.
For $|s|<r$ and $z\perp\gamma'(0)$ with $0<\rho:=|z|<r/4$, let $x=\gamma(s)+z$.
After decreasing $r_*$, these graph coordinates satisfy $d_\Gamma(x)\approx\rho$ and have Jacobian
$|\gamma'(s)\cdot\gamma'(0)|\in[7/8,1]$.

Let $\tau_s=\gamma'(s)$, $q_s=z\cdot\tau_s$ and $z_s=z-q_s\tau_s$. Then $|q_s|\leq\rho/8$ and
$|z_s|\approx\rho$. For $k(w)=c_{1,n}w|w|^{-n}$, define the tangent integral
$$
 L_s(z)=\int_{-r}^r k(t\tau_s-z)\,dt.
$$
Since $[q_s-|z_s|,q_s+|z_s|]\subset(-r,r)$,
\begin{align*}
 |L_s(z)|
 &\geq-\frac{z_s}{|z_s|}\cdot L_s(z)
 =c_{1,n}|z_s|\int_{-r}^r\bigl((t-q_s)^2+|z_s|^2\bigr)^{-n/2}\,dt
 \geq2^{1-n/2}c_{1,n}|z_s|^{2-n}\geq c_0\rho^{2-n},
\end{align*}
where  $c_0=2^{1-n/2}c_{1,n}.$

For $|t|<r$, write
$$
 \gamma(s+t)-\gamma(s)=t\tau_s+E_s(t),\qquad
 |E_s(t)|\leq|t|\vartheta_\gamma(|t|).
$$
The small slope implies $|t\tau_s-z+uE_s(t)|\geq c(t^2+\rho^2)^{1/2}$ for $0\leq u\leq1$.
Since $\|Dk(w)\|\leq C|w|^{-n}$, the mean value theorem followed by a dyadic decomposition yields
\begin{align*}
 \left|\int_{-r}^r k\bigl(\gamma(s+t)-\gamma(s)-z\bigr)\,dt-L_s(z)\right|
 &\leq\int_{-r}^r|E_s(t)|\int_0^1\|Dk(t\tau_s-z+uE_s(t))\|\,du\,dt\\
 &\leq C\int_0^r\frac{t\vartheta_\gamma(t)}{(t^2+\rho^2)^{n/2}}\,dt\\
 &\leq C\rho^{2-n}\bigg(\vartheta_\gamma(\rho)
   +\sum_{\substack{j\geq0\\2^j\rho<r}}2^{-j(n-2)}\vartheta_\gamma(2^{j+1}\rho)\bigg)\\
 &\leq C'\rho^{2-n}\vartheta_\gamma(2r).
\end{align*}
The last inequality follows from $\sum_{j\geq0}2^{-j(n-2)}<\infty$. On the remaining source,
$r\leq|t|\leq4r$ and $|\gamma(s+t)-\gamma(s)-z|\geq c|t|$, so
$$
 \left|Tf_r(\gamma(s)+z)-\int_{-r}^r k\bigl(\gamma(s+t)-\gamma(s)-z\bigr)\,dt\right|
 \leq C\int_{r\leq|t|\leq4r}|t|^{1-n}\,dt\leq Cr^{2-n}=:C_r.
$$
These constants are uniform in $s,z$, and $c_0,C'$ are independent of $r$. Consequently,
$$
 |Tf_r(\gamma(s)+z)|\geq\bigl(c_0-C'\vartheta_\gamma(2r)\bigr)\rho^{2-n}-C_r.
$$
Since $\gamma'$ is continuous, first fix $r$ with $C'\vartheta_\gamma(2r)\leq c_0/4$, and then choose
$\rho_1<\min\{r/4,\rho_0\}$ with $C_r\rho_1^{n-2}\leq c_0/4$. For the fixed function $f=f_r$,
$$
 |Tf(\gamma(s)+z)|\geq\tfrac12c_0\rho^{2-n},\qquad |s|<r,\quad0<\rho<\rho_1.
$$
The Green correction satisfies $|S_\Omega f|\leq\|B_\Omega\|_\infty\|f\|_1$, so after decreasing $\rho_1$,
$|\mathcal G_\Omega f(\gamma(s)+z)|\gtrsim\rho^{2-n}$ as well. Hence
$$
 \int_{U_{\rho_0}}|Tf(x)|^2d_\Gamma(x)^\beta\,dx
 \geq c\int_0^{\rho_1}\rho^{4-2n}\rho^{\beta+n-2}\,d\rho
 =c\int_0^{\rho_1}\rho^{\beta+2-n}\,d\rho=\infty
 \qquad(\beta\leq n-3).
$$
For $n=2$, local finiteness already requires $\beta>-1=n-3$. At infinity, the mean value theorem yields
$$
 \left|T\one_\Gamma(x)+c_{1,n}\sigma(\Gamma)\frac{x}{|x|^n}\right|
 \leq\int_\Gamma\left|\bfK^{1,n}(x,y)+c_{1,n}\frac{x}{|x|^n}\right|\,ds_\Gamma(y)
 \leq C|x|^{-n}\int_\Gamma|y|\,ds_\Gamma(y).
$$
The leading term has size $c_{1,n}\sigma(\Gamma)|x|^{1-n}$, so
$$
 \int_{|x|>R}|T\one_\Gamma(x)|^2d_\Gamma(x)^\beta\,dx
 \geq c\int_R^\infty r^{2-2n}r^{\beta+n-1}\,dr
 =c\int_R^\infty r^{\beta+1-n}\,dr=\infty
 \qquad(\beta\geq n-2)
$$
for sufficiently large $R$. The divergences at $\beta=n-3$ ($n\geq3$) and $\beta=n-2$ are logarithmic.
\end{proof}

\begin{remark}\label{rem:distance-trunc}
In the compactness ranges above, the smooth truncations from Section~\ref{subsec:truncations} converge in norm
to the ordinary integral $T$. The inner error is supported where $|x-y|<\varepsilon$, hence $d_\Gamma(x)<\varepsilon$.
For large $R$, the outer error is supported where $|x-y|>R$, hence $|x|>R/2$. Thus, in either target space,
$$
 \|T-T_{\varepsilon,R,\sigma}\|
 \leq C\bigl(\|\one_{\{d_\Gamma<\varepsilon\}}P\|+\|\one_{\{|x|>R/2\}}P\|\bigr)\to0.
$$
The estimates above show that $P$ is bounded in either target space. Hence the smooth truncations are uniformly bounded,
and the ordinary integral $T$ is their weak operator limit.
On the bounded tube the outer term is zero for large $R$. Any uniformly bounded smooth cutoff family that equals
one for $C\varepsilon\leq|x-y|\leq R/C$, with fixed $C\geq1$, satisfies the same estimate with fixed multiples of the radii.
\end{remark}

\subsection{The Hilbert--Schmidt condition}

\begin{proposition}\label{prop:distance-hilbert-schmidt}
For the bounded tube in \cref{thm:compact-distance-powers}, the Riesz operator is Hilbert--Schmidt if and
only if $\beta>n-2$. The Green gradient is Hilbert--Schmidt if and only if $\beta>n-2$.  In the whole-space range $n-3<\beta<n-2$,
the Riesz operator is compact but is not Hilbert--Schmidt.
\end{proposition}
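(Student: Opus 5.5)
The plan is to compute the Hilbert--Schmidt norm directly. Since $\omega_\beta(\Gamma)=0$ and the ordinary integral agrees with the weak limit, the Riesz operator on the tube is given off $\Gamma$ by the pointwise kernel $\bfK^{1,n}(x,y)$. So it is Hilbert--Schmidt if and only if
$$
 I_\beta=\int_{U_{\rho_0}}\int_\Gamma|x-y|^{2-2n}\,ds_\Gamma(y)\,d_\Gamma(x)^\beta\,dx<\infty.
$$
(If some other square-integrable kernel represented $T$, it would agree with $\bfK^{1,n}$ on products of separated compact sets, hence almost everywhere, as in the argument of Section~\ref{sec:examples}.)

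First step: compute the inner integral $\int_\Gamma|x-y|^{2-2n}\,ds_\Gamma(y)$ in the graph coordinates $x=F(s,z)$, $\rho=|z|$, fixed before \cref{thm:compact-distance-powers}.
\begin{itemize}
\item For $x$ over the interior of a chart, the comparison $|x-y|\approx(|s-t|^2+\rho^2)^{1/2}$ shows the inner integral is comparable to $\int_{-C}^C(t^2+\rho^2)^{1-n}\,dt\approx\rho^{3-2n}$, with constants uniform in the chart. This holds for every $n\geq2$, since $2n-2>1$.
\item In endpoint charts the same bound holds with $\rho$ replaced by $\delta=(s^2+\rho^2)^{1/2}$.
\item Separated source pieces contribute a bounded amount.
\end{itemize}

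Second step: integrate in $x$. Over the interior, with $d_\Gamma(x)\approx\rho$ and $dx\approx ds\,\rho^{n-2}d\rho$, the integral becomes $\int_0^{c}\rho^{\beta+3-2n+n-2}\,d\rho=\int_0^c\rho^{\beta+1-n}\,d\rho$. This is finite if and only if $\beta>n-2$. The endpoint regions contribute $\int_0^c\delta^{\beta+3-2n}\delta^{n-1}\,d\delta=\int_0^c\delta^{\beta+2-n}\,d\delta$, which is finite when $\beta>n-2$. The bounded part contributes $\omega_\beta(U_{\rho_0})<\infty$. The lower bound for $\beta\leq n-2$ comes from the interior region alone, since the interior of an arc is nonempty for a nontrivial curve.

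Green gradient: by \cref{prop:green}, $S_\Omega$ is Hilbert--Schmidt because $\omega_\beta(\overline U_{\rho_0})<\infty$. Hence $\mathcal G_\Omega=a_nT+S_\Omega$ is Hilbert--Schmidt exactly when $T$ is.

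Whole space, range $n-3<\beta<n-2$: compactness is \cref{thm:compact-distance-powers}. The Hilbert--Schmidt norm over $\R^n$ dominates the tube norm $I_\beta$, which diverges because $\beta\leq n-2$. As a cross-check, the far field also diverges: for $|x|$ large the kernel squared is about $\sigma(\Gamma)|x|^{2-2n}$, and
$$
 \int_R^\infty r^{\beta+2-2n}r^{n-1}\,dr=\int_R^\infty r^{\beta+1-n}\,dr,
$$
which is finite when $\beta<n-2$. So the obstruction in the whole-space range is the near-curve divergence only.

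The main obstacle I expect is making the lower bound rigorous uniformly near the curve, since $\Gamma$ is only $C^1$. Two things are needed: the inner integral must satisfy $\gtrsim\rho^{3-2n}$ on a set of $s$ of positive length, and the Jacobian of $F$ and the comparison $d_\Gamma\approx\rho$ must hold there. I would get both from the local graph comparisons already fixed, restricting to $|t-s|<\rho$, where $|x-y|\lesssim\rho$ and the source has length about $\rho$. This gives the lower bound $\rho\cdot\rho^{2-2n}$ directly.
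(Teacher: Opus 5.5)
Your proposal is correct and follows essentially the same route as the paper. You use the interior bound $\int_\Gamma|x-y|^{2-2n}\,ds_\Gamma(y)\approx\rho^{3-2n}$ (with the lower bound from $|t-s|<\rho$), the endpoint bound in $\delta$, the radial integrals $\int_0\rho^{\beta+1-n}\,d\rho$ and $\int_0\delta^{\beta+2-n}\,d\delta$, the Hilbert--Schmidt correction $S_\Omega$ for the Green gradient, and the near-curve divergence in the whole-space range. The only minor difference is that you identify a representing square-integrable kernel with $\bfK^{1,n}$ almost everywhere via the off-diagonal identity, whereas the paper bounds $\|T\|_{\HS}$ below by the restrictions $\one_{V\cap\{d_\Gamma>\varepsilon\}}T$, whose kernels are square integrable, and lets $\varepsilon\to0$; both justifications are valid.
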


\begin{proof}
In a smaller interior graph neighbourhood, with $\rho=d_\Gamma(x)$, the squared kernel satisfies
$$
 \int_\Gamma|\bfK^{1,n}(x,y)|^2\,ds_\Gamma(y)
 \approx\int_{-c}^c(t^2+\rho^2)^{1-n}\,dt\approx\rho^{3-2n}.
$$
Restricting the integral to $|t|<\rho$ proves the lower bound. The substitution $t=\rho u$ in the integral over $\R$
proves the upper bound. The interior contribution is therefore finite exactly when
$$
 \int_0^{\rho_0}\rho^{\beta+n-2}\rho^{3-2n}\,d\rho
 =\int_0^{\rho_0}\rho^{\beta+1-n}\,d\rho<\infty,
 \quad\text{that is,}\quad\beta>n-2.
$$
Beyond an endpoint, with $\delta=(\dist(s,J)^2+|z|^2)^{1/2}$, we have
$$
 \int_\Gamma|\bfK^{1,n}(x,y)|^2\,ds_\Gamma(y)
 \leq C+C\int_0^\infty(t^2+\delta^2)^{1-n}\,dt\leq C\delta^{3-2n}.
$$
Its weighted integral is at most
$$
 C\int_0^{\rho_0}\delta^{\beta+n-1}\delta^{3-2n}\,d\delta
 =C\int_0^{\rho_0}\delta^{\beta+2-n}\,d\delta<\infty
 \qquad(\beta>n-3).
$$
This endpoint condition is weaker than the interior requirement $\beta>n-2$.
The remaining kernel is bounded on a product of finite measure spaces, so $T$ is Hilbert--Schmidt for $\beta>n-2$.

Conversely, if $T$ were Hilbert--Schmidt, restriction to a fixed smaller interior graph neighbourhood $V$ would imply
\begin{align*}
 \|T\|_{\HS}^2
 &\geq\|\one_{V\cap\{d_\Gamma>\varepsilon\}}T\|_{\HS}^2
 =\int_{V\cap\{d_\Gamma>\varepsilon\}}\int_\Gamma
       |\bfK^{1,n}(x,y)|^2\,ds_\Gamma(y)\,d\omega_\beta(x)\\
 &\geq c\int_{C\varepsilon}^{\rho_1}\rho^{\beta+1-n}\,d\rho
 \to\infty\qquad(\beta\leq n-2).
\end{align*}
Each restricted kernel is square integrable, so its squared Hilbert--Schmidt norm is the double integral above.
The assertion for the Green gradient follows from the Hilbert--Schmidt correction in \cref{prop:green}.

At infinity,
$$
 \int_{|x|>R}\int_\Gamma|\bfK^{1,n}(x,y)|^2\,ds_\Gamma(y)\,d\omega_\beta(x)
 \approx\int_R^\infty r^{\beta+1-n}\,dr<\infty
 \quad\Longleftrightarrow\quad\beta<n-2.
$$
This is incompatible with the local requirement $\beta>n-2$.
\end{proof}

\subsection{Biot--Savart operators}

Let $\Gamma\subset\R^3$ be a compact regular $C^1$ curve, let $\sigma$ be a finite positive measure supported on
$\Gamma$, and let $\omega$ be locally finite. For $\bfK^{1,3}$, assume that $\mathfrak N_s(\sigma,\omega)<\infty$,
and let $T$ be the weak operator limit of the smooth truncations.

Let $e_1,e_2,e_3$ be the standard basis of $\mathbb R^3$.
For $F\in L^2(\sigma;\mathbb C^3)$, write
$F=\sum_{k=1}^3 F_ke_k$, where $F_k\in L^2(\sigma;\mathbb C)$
is the $k$th component of $F$. 
Let $A_kv=e_k\times v$, $P_kF=F_k$, $J_kf=fe_k$ and $c_B=-(4\pi c_{1,3})^{-1}$, with $A_k$ acting complex linearly.

Using the component identity from \cite[proof of Corollary~8.4]{BoundedDini}, define
$$
 \mathcal B_\sigma=c_B\sum_{k=1}^3A_kTP_k,
 \qquad \mathcal B_\sigma J_k=c_BA_kT.
$$
Off $\Gamma$, $\mathcal B_\sigma$ agrees $\omega$-almost everywhere with the absolutely convergent integral
$$
 \mathcal B_\sigma F(x)=\frac1{4\pi}\int_\Gamma
       \frac{F(y)\times(x-y)}{|x-y|^3}\,d\sigma(y),\qquad F\in L^2(\sigma;\mathbb C^3).
$$

\begin{proposition}\label{prop:compact-biot-savart}
The operator $\mathcal B_\sigma:L^2(\sigma;\mathbb C^3)\to L^2(\omega;\mathbb C^3)$ is compact if and only if $T:L^2(\sigma)\to
L^2(\omega;\mathbb C^3)$ is compact.  Moreover,
$$
 \frac{2|c_B|}{3}\|T\|_{\mathrm{ess}}
 \leq\|\mathcal B_\sigma\|_{\mathrm{ess}}
 \leq\sqrt3\,|c_B|\|T\|_{\mathrm{ess}}.
$$
When $\Gamma$ is $C^{1,\mathrm{Dini}}$, \cref{thm:main} therefore characterizes compactness of the Biot--Savart operator
for arbitrary vector densities.
\end{proposition}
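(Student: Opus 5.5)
The plan is to write each operator as a finite combination of the other composed with bounded pointwise matrix multiplications and coordinate maps. The essential norm then transfers in both directions, with explicit constants. Throughout, $A_k$, $A_k^*$, $P_k$ and $J_k$ act pointwise: $A_k$ and $A_k^*$ by constant $3\times3$ matrices on $L^2(\omega;\mathbb C^3)$, while $P_k$ and $J_k$ are the coordinate maps on $L^2(\sigma;\mathbb C^3)$ and $L^2(\sigma)$. Each of these operators is bounded, and multiplying a compact operator by a bounded one on either side keeps it compact.

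\textbf{Upper bound.} Given a compact $K:L^2(\sigma)\to L^2(\omega;\mathbb C^3)$, the operator $K_B=c_B\sum_kA_kKP_k$ is compact. We have $\mathcal B_\sigma-K_B=c_B\sum_kA_k(T-K)P_k$. The matrix $v\mapsto e_k\times v$ has operator norm $1$ on $\mathbb C^3$, and $\|P_kF\|\le\|F\|$. By the triangle inequality and Cauchy--Schwarz,
$$
\|(\mathcal B_\sigma-K_B)F\|\le|c_B|\|T-K\|\sum_{k=1}^3\|F_k\|\le\sqrt3\,|c_B|\|T-K\|\|F\|.
$$
Taking the infimum over compact $K$ gives $\|\mathcal B_\sigma\|_{\ess}\le\sqrt3|c_B|\|T\|_{\ess}$.

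\textbf{Lower bound.} This is the step needing an idea: we must invert the cross products. The key identity is
$$
A_k^*A_kv=v-(e_k\cdot v)e_k,
$$
which is the orthogonal projection onto $e_k^\perp$. Summing over $k$ gives $\sum_kA_k^*A_k=3I-I=2I$. Combined with $\mathcal B_\sigma J_k=c_BA_kT$, this yields the exact recovery formula
$$
T=\frac1{2c_B}\sum_{k=1}^3A_k^*\mathcal B_\sigma J_k.
$$
Now let $K'$ be compact with $\|\mathcal B_\sigma-K'\|$ close to $\|\mathcal B_\sigma\|_{\ess}$. Then $(2c_B)^{-1}\sum_kA_k^*K'J_k$ is compact. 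Using $\|A_k^*\|=1$ and $\|J_k\|=1$, its distance to $T$ is at most $\frac{3}{2|c_B|}\|\mathcal B_\sigma-K'\|$. Hence $\|T\|_{\ess}\le\frac{3}{2|c_B|}\|\mathcal B_\sigma\|_{\ess}$, which is the stated left inequality.

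\textbf{Conclusions.} The two inequalities together show that $\mathcal B_\sigma$ is compact if and only if $T$ is. For $C^{1,\mathrm{Dini}}$ curves, combining this with \cref{thm:main} applied to the pair $(\sigma,\omega)$ and the kernel $\bfK^{1,3}$ gives $\|\mathcal B_\sigma\|_{\ess}\approx\mathfrak V_0$. It also characterizes compactness of $\mathcal B_\sigma$ by $\mathfrak V(r)\to0$.
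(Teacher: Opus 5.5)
Your proof is correct and is essentially the paper's argument. You use the same compact approximants $c_B\sum_kA_kKP_k$ and $(2c_B)^{-1}\sum_kA_k^*K'J_k$ and obtain the same constants $\sqrt3\,|c_B|$ and $3/(2|c_B|)$; your recovery formula is the paper's $T=-(2c_B)^{-1}\sum_kA_k\mathcal B_\sigma J_k$, since $A_k$ is real skew-symmetric, so $A_k^*=-A_k$ and $\sum_kA_k^*A_k=2I$ is just $\sum_kA_k^2=-2I$.
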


\begin{proof}
Since $\sum_{k=1}^3A_k^2=-2I$, the identity $\mathcal B_\sigma J_k=c_BA_kT$ implies
$$
 T=-\frac1{2c_B}\sum_{k=1}^3A_k\mathcal B_\sigma J_k.
$$
For a compact operator $K$ with the domain and range of $T$, the operator $K_B=c_B\sum_kA_kKP_k$ is compact, and
\begin{align*}
 \|\mathcal B_\sigma\|_{\ess}
 &\leq\|\mathcal B_\sigma-K_B\|
 =|c_B|\sup_{\|F\|_2\leq1}\left\|\sum_{k=1}^3A_k(T-K)F_k\right\|_2\\
 &\leq\sqrt3\,|c_B|\sup_{\|F\|_2\leq1}
       \left(\sum_{k=1}^3\|(T-K)F_k\|_2^2\right)^{1/2}\\
 &\leq\sqrt3\,|c_B|\|T-K\|.
\end{align*}
For a compact operator $L$ with the domain and range of $\mathcal B_\sigma$, let $L_T=-(2c_B)^{-1}\sum_kA_kLJ_k$.
Then $L_T$ is compact, and
\begin{align*}
 \|T\|_{\ess}
 &\leq\|T-L_T\|
 =\frac1{2|c_B|}\left\|\sum_{k=1}^3A_k(\mathcal B_\sigma-L)J_k\right\|\\
 &\leq\frac1{2|c_B|}\sum_{k=1}^3\|A_k\|\|\mathcal B_\sigma-L\|\|J_k\|\\
 &\leq\frac3{2|c_B|}\|\mathcal B_\sigma-L\|.
\end{align*}
Taking infima over $K$ and $L$ proves the essential-norm estimates and hence the compactness equivalence.
The two identities hold pointwise for the ordinary integrals off $\Gamma$. When $\omega(\Gamma)=0$, boundedness of
either ordinary integral therefore implies boundedness of the other.
\end{proof}

Under the assumptions of \cref{thm:compact-distance-powers} with $n=3$, let $\sigma=ds_\Gamma$ and
$d\omega_\beta(x)=d_\Gamma(x)^\beta\,dx$. Then \cref{thm:compact-distance-powers,prop:compact-biot-savart} give
$$
 \mathcal B_{ds_\Gamma}\text{ is compact on a bounded tube}
 \quad\Longleftrightarrow\quad\beta>0,
$$
and
$$
 \mathcal B_{ds_\Gamma}\text{ is compact on }\R^3
 \quad\Longleftrightarrow\quad0<\beta<1.
$$
For necessity, suppose that $\mathcal B_{ds_\Gamma}$ is bounded on the tube or on $\R^3$. Since $\omega_\beta(\Gamma)=0$,
the identities in the proof of \cref{prop:compact-biot-savart} imply that $T$ is bounded on the same set. The necessity
proof in \cref{thm:compact-distance-powers} then gives the stated ranges.
The sum of the squared matrix entries of $v\mapsto v\times z$ is $2|z|^2$, so the squared matrix kernel norm is
$2(4\pi)^{-2}|x-y|^{-4}$. By \cref{prop:distance-hilbert-schmidt}, the bounded-tube operator is Hilbert--Schmidt
exactly when $\beta>1$, and none of the whole-space operators with $0<\beta<1$ is Hilbert--Schmidt.

Let $\tau$ be the unit tangent along $\Gamma$. Since multiplication $f\mapsto f\tau$ is bounded from $L^2(ds_\Gamma)$ to
$L^2(ds_\Gamma;\mathbb C^3)$, the field operator $f\mapsto\mathcal B_{ds_\Gamma}(f\tau)$ for a current along
$\Gamma$ is compact in the stated ranges.

\begin{corollary}\label{cor:weak-source-fields}
Let $n\geq2$, and fix the bounded $C^2$ domain $\Omega$, the compact regular simple $C^1$ curve
$\Gamma\Subset\Omega$, and the tube $\overline U_{\rho_0}\subset\Omega$ as above.
Suppose that $f_j\rightharpoonup f$ in $L^2(ds_\Gamma)$. Define
$$
 u_j(x)=\int_\Gamma G_\Omega(x,y)f_j(y)\,ds_\Gamma(y),
 \qquad
 u(x)=\int_\Gamma G_\Omega(x,y)f(y)\,ds_\Gamma(y).
$$
If $\beta>n-3$, then
$$
 \int_{U_{\rho_0}}
 |\nabla u_j-\nabla u|^2d_\Gamma(x)^\beta\,dx\to0.
$$
In particular, the weighted field energies converge:
$$
 \int_{U_{\rho_0}}|\nabla u_j|^2d_\Gamma(x)^\beta\,dx\to\int_{U_{\rho_0}}|\nabla u|^2d_\Gamma(x)^\beta\,dx.
$$

In dimension three, suppose instead that $F_j\rightharpoonup F$ in $L^2(ds_\Gamma;\mathbb C^3)$. If $0<\beta<1$, then
$$
 \|\mathcal B_{ds_\Gamma}F_j-\mathcal B_{ds_\Gamma}F\|_{L^2(\R^3,d_\Gamma(x)^\beta\,dx;\mathbb C^3)}\to0.
$$
\end{corollary}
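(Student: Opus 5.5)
The plan is to use the standard fact that a compact operator maps weakly convergent sequences to norm convergent ones. We then need only identify the fields with the images of the source densities under the compact operators of \cref{thm:compact-distance-powers} and \cref{prop:compact-biot-savart}.

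For the Green part, take $\sigma=ds_\Gamma$ and $\omega=\one_{U_{\rho_0}}\omega_\beta$. For $\beta>n-3$, \cref{thm:compact-distance-powers} shows that the ordinary integral $T$ is bounded and compact from $L^2(ds_\Gamma)$ to $L^2(U_{\rho_0},\omega_\beta;\mathbb C^n)$. It is also the weak limit of the smooth truncations, so \cref{prop:green} applies and $\mathcal G_\Omega=a_nT+S_\Omega$ is compact. Since $\omega_\beta(\Gamma)=0$ and $\overline U_{\rho_0}\subset\Omega$, the identity $\nabla u_f=\mathcal G_\Omega f$ holds $\omega_\beta$-almost everywhere on $U_{\rho_0}\setminus\Gamma$, that is, almost everywhere for the target measure. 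This identity was established before \cref{prop:green} by differentiating under the integral sign off $\Gamma$. Hence
$$
 \int_{U_{\rho_0}}|\nabla u_j-\nabla u|^2d_\Gamma^\beta\,dx=\|\mathcal G_\Omega(f_j-f)\|_2^2 .
$$
A weakly convergent sequence is bounded, and $f_j-f\rightharpoonup0$. Compactness then gives $\mathcal G_\Omega(f_j-f)\to0$ in norm. This is the standard argument: every subsequence has a further subsequence whose image converges in norm, and the limit must be $0$ because $\mathcal G_\Omega(f_j-f)\rightharpoonup0$ by boundedness. The energy convergence follows from the reverse triangle inequality $\bigl|\|\nabla u_j\|-\|\nabla u\|\bigr|\le\|\nabla u_j-\nabla u\|$ in $L^2(U_{\rho_0},\omega_\beta)$.

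For the Biot--Savart part, take $n=3$ and $0<\beta<1$. Then $n-3<\beta<n-2$, so $T:L^2(ds_\Gamma)\to L^2(\omega_\beta;\mathbb C^3)$ is compact on the whole space by \cref{thm:compact-distance-powers}. It is the weak limit of the smooth truncations, by \cref{rem:distance-trunc}. By \cref{prop:compact-biot-savart}, $\mathcal B_{ds_\Gamma}$ is therefore compact from $L^2(ds_\Gamma;\mathbb C^3)$ to $L^2(\omega_\beta;\mathbb C^3)$. The same compactness argument applied to $F_j-F\rightharpoonup0$ gives the norm convergence. Here $\mathcal B_{ds_\Gamma}F$ agrees almost everywhere with the absolutely convergent integral off $\Gamma$, and $\Gamma$ is $\omega_\beta$-null.

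The only point requiring care, and the step I would check first, is the identification of the classical potentials and fields with the operator realizations $\mathcal G_\Omega$ and $\mathcal B_{ds_\Gamma}$ as elements of the weighted $L^2$ spaces. This rests on $\omega_\beta(\Gamma)=0$ together with the off-diagonal identity \eqref{eq:off-diagonal} applied on the exhausting sets $A_m$, exactly as in the discussion preceding \cref{thm:compact-distance-powers}. Once that identification is in place, the rest is routine.
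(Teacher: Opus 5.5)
Your proposal is correct and is essentially the paper's proof. Compactness of $\mathcal G_\Omega$ comes from \cref{thm:compact-distance-powers} together with the Hilbert--Schmidt correction of \cref{prop:green}, and compactness of $\mathcal B_{ds_\Gamma}$ from \cref{thm:compact-distance-powers}, \cref{rem:distance-trunc} and \cref{prop:compact-biot-savart}; these are combined with the identification of the fields with the operator images off the $\omega_\beta$-null set $\Gamma$ and the weak-to-norm property of compact operators. The only cosmetic difference is that you obtain energy convergence from the reverse triangle inequality, whereas the paper uses $\bigl|\|\nabla u_j\|_2^2-\|\nabla u\|_2^2\bigr|\leq(\|\nabla u_j\|_2+\|\nabla u\|_2)\|\nabla u_j-\nabla u\|_2$.
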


\begin{proof}
By \cref{thm:compact-distance-powers}, $\mathcal G_\Omega$ is compact. Thus $f_j\rightharpoonup f$ implies
$$
 \|\nabla u_j-\nabla u\|_{L^2(U_{\rho_0},\omega_\beta)}
 =\|\mathcal G_\Omega(f_j-f)\|_{L^2(U_{\rho_0},\omega_\beta)}\to0.
$$
The energy difference satisfies
$$
 \bigl|\|\nabla u_j\|_2^2-\|\nabla u\|_2^2\bigr|
 \leq(\|\nabla u_j\|_2+\|\nabla u\|_2)\|\nabla u_j-\nabla u\|_2\to0.
$$
For Biot--Savart fields, \cref{thm:compact-distance-powers,rem:distance-trunc} show that the whole-space Riesz integral
is compact. By \cref{prop:compact-biot-savart},
$\mathcal B_{ds_\Gamma}$ is compact, so weak convergence of the densities again implies strong convergence of the fields.
\end{proof}

\section{Tangent operators and an exact essential norm}\label{sec:tangent}

We compare the curve kernel with the tangential Hilbert kernel to prove \cref{thm:tangent-exact}.

Assume $n\geq2$.  Let $\mathbb T_L=\R/L\mathbb Z$, with Lebesgue measure $ds$ and cyclic distance
$\delta_L(s,t)=\min_{k\in\mathbb Z}|s-t+kL|$. Let $\Phi:\mathbb T_L\to\R^n$ be a regular $C^{1,\mathrm{Dini}}$
embedding parametrized by arclength, and let $\Gamma=\Phi(\mathbb T_L)$ and $\tau(s)=\Phi'(s)$. Thus $|\tau|=1$.
Define
$$
 \vartheta(r)=\sup_{\delta_L(s,t)\leq r}|\tau(s)-\tau(t)|,
 \qquad \int_0^{L/2}\vartheta(r)\,\frac{dr}{r}<\infty.
$$
For the positive densities $a,b\in C(\Gamma)$, use the same letters for their compositions with $\Phi$. Then
$$
 \sigma=\Phi_\#(a(s)\,ds),\qquad \omega=\Phi_\#(b(s)\,ds).
$$
Since $a,b$ are positive and continuous, choose $m,M>0$ with $m\leq a(s),b(s)\leq M$.
Let $c=c_{n-1,n}$ and use $T^\sharp_\varepsilon$ from the Introduction. The maps
$$
 U_\sigma f(s)=\sqrt{a(s)}f(\Phi(s)),\qquad
 U_\omega F(s)=\sqrt{b(s)}F(\Phi(s)),
$$
are unitary onto $L^2(ds)$ and $L^2(ds;\mathbb C^n)$, respectively.
Let $M_v$ denote multiplication by $v$, and define
$$
 \mathcal J_Lg(s)=\operatorname{p.v.}\int_{\mathbb T_L}
       \frac{\pi}{L}\cot\frac{\pi(t-s)}{L}\,g(t)\,dt.
$$

\subsection{The Hilbert transform and the curve kernel}

For $k\in\mathbb Z$, let $e_k(s)=L^{-1/2}e^{2\pi iks/L}$. The kernel $k_L(h)=(\pi/L)\cot(\pi h/L)$ is odd and
$L$-periodic, with $k_L(h)=h^{-1}+O_L(|h|)$ near zero.
Let $\mathcal J_{L,\varepsilon}$ denote its truncation to $\delta_L(s,t)>\varepsilon$.
The compactness of commutators with continuous multipliers is classical, see \cite{Hartman}.

\begin{lemma}\label{lem:tangent-hilbert}
The multiplier of $\mathcal J_L$ is $i\pi\operatorname{sgn}(k)$, with $\operatorname{sgn}(0)=0$, and
$\|\mathcal J_L\|=\pi$. Its symmetric arclength truncations are uniformly bounded and converge strongly on
$L^2(\mathbb T_L)$. The principal value exists almost everywhere. For $q\in C(\mathbb T_L)$,
$[\mathcal J_L,M_q]$ is compact. If $v\in C(\mathbb T_L;\mathbb C^\ell)$, $\ell\geq1$, then
$$
 \|M_v\mathcal J_L\|_{\ess}=\pi\|v\|_\infty.
$$
\end{lemma}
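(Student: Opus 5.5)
The plan is to work on the Fourier side throughout. First compute the multiplier: for $e_k$, the principal value $\int_{\mathbb T_L}k_L(t-s)e^{2\pi ikt/L}\,dt$ reduces, after the substitution $h=t-s$, to $e_k(s)\int k_L(h)e^{2\pi ikh/L}\,dh$. By oddness of $k_L$ this equals $i\int k_L(h)\sin(2\pi kh/L)\,dh$. The classical identity $\operatorname{p.v.}\int_0^{L}\cot(\pi h/L)\sin(2\pi kh/L)\,dh=L\operatorname{sgn}(k)$ (proved by induction on $k$ using $\sin((k+1)x)-\sin((k-1)x)=2\cos(kx)\sin x$, or by residues) then gives the multiplier $i\pi\operatorname{sgn}(k)$. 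Since $\sup_k|i\pi\operatorname{sgn}(k)|=\pi$, Parseval yields $\|\mathcal J_L\|=\pi$.

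For the truncations, I will show the truncated multipliers $m_\varepsilon(k)=\int_{\delta_L>\varepsilon}k_L(h)e^{2\pi ikh/L}\,dh$ are bounded uniformly in $k,\varepsilon$ and converge pointwise to $i\pi\operatorname{sgn}(k)$. Write $k_L(h)=h^{-1}+r_L(h)$ with $r_L$ bounded near zero. The $h^{-1}$ part gives $2i\int_{\varepsilon}^{L/2}\sin(2\pi kh/L)\,dh/h$, which is uniformly bounded by the boundedness of the sine integral. The $r_L$ part is bounded by $\|r_L\|_{L^1}$. Uniform boundedness and pointwise convergence of the multipliers give strong convergence via Parseval and dominated convergence on the coefficients.

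For almost everywhere existence, I will dominate the maximal truncation by $C(Mg+M\mathcal J_Lg)$ through the standard Cotlar-type comparison on the circle, or transfer to the line with Carleson-free arguments. Since the truncations converge for trigonometric polynomials (smooth $g$), a maximal inequality plus density gives a.e. convergence. This is the step I expect to need most care. I would prefer to cite the classical periodic Hilbert transform theory (Zygmund) rather than redo the maximal estimate.

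For the commutator, approximate $q$ uniformly by trigonometric polynomials $q_N$. Then $\|[\mathcal J_L,M_q-M_{q_N}]\|\le2\pi\|q-q_N\|_\infty$. For $q=e^{2\pi ijs/L}$, the commutator acts on $e_k$ as $i\pi(\operatorname{sgn}(k+j)-\operatorname{sgn}(k))e_{k+j}$, which is nonzero for only finitely many $k$, so it has finite rank. Hence $[\mathcal J_L,M_q]$ is a norm limit of finite-rank operators and is compact.

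For the essential norm, the upper bound is $\|M_v\mathcal J_L\|_{\ess}\le\|M_v\|\|\mathcal J_L\|=\pi\|v\|_\infty$. For the lower bound, fix $s_0$ with $|v(s_0)|=\|v\|_\infty$, and let $g_j=\phi_j e_{N_j}$. Here $\phi_j$ is an $L^2$-normalized smooth bump supported in a $1/j$-neighbourhood of $s_0$, and $N_j\to\infty$ is chosen so fast that $\|\mathcal J_Lg_j-i\pi g_j\|\le1/j$. This is possible because $\widehat{\phi_j}$ decays rapidly, so the Fourier mass of $\phi_je_{N}$ in $k\le0$ tends to zero as $N\to\infty$. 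Then $g_j\rightharpoonup0$, so $\|Kg_j\|\to0$ for every compact $K$. Moreover $\|M_v\mathcal J_Lg_j\|\ge\pi\|vg_j\|-\|v\|_\infty/j\to\pi|v(s_0)|$ by continuity of $v$. Hence $\|M_v\mathcal J_L-K\|\ge\pi\|v\|_\infty$ for every compact $K$.
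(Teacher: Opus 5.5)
Your proposal is correct and follows the paper's proof essentially step by step. It uses the same multiplier computation, the same sine-integral bound on the truncated multipliers followed by Parseval and dominated convergence, the same finite-rank argument for commutators with trigonometric polynomials, the same modulated-bump lower bound for the essential norm, and a citation to the classical theory for almost-everywhere existence. The only cosmetic difference is that you shrink the bumps and choose $N_j$ diagonally, whereas the paper fixes one bump on an arc where $|v|>\|v\|_\infty-\eta$ and lets $\eta\to0^+$ at the end. One small correction: $\phi_j e_{N_j}$ has norm $L^{-1/2}$, so you should use $\phi_j e^{2\pi i N_j s/L}$ instead.
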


\begin{proof}
The multiplier is
$$
 m(k)=\operatorname{p.v.}\int_{-L/2}^{L/2}k_L(h)e^{2\pi ikh/L}\,dh
 =i\int_0^\pi\cot(\theta/2)\sin(k\theta)\,d\theta=i\pi\operatorname{sgn}(k).
$$
For $k\geq1$, the last equality follows by integrating
$\cot(\theta/2)\sin(k\theta)=1+\cos(k\theta)+2\sum_{j=1}^{k-1}\cos(j\theta)$. The other frequencies follow by oddness.

Since $k_L(h)-h^{-1}\in L^1(0,L/2)$, the truncated multipliers satisfy, for $k\ne0$ and $0<\varepsilon<L/2$,
$$
 |m_\varepsilon(k)|
 \leq2\bigg|\int_{2\pi|k|\varepsilon/L}^{\pi|k|}\frac{\sin u}{u}\,du\bigg|+C_L\leq C_L'.
$$
The sine integrals are uniformly bounded by integration by parts above $1$. The multipliers vanish when $k=0$ or
$\varepsilon\geq L/2$. Parseval's identity and dominated convergence prove strong convergence, and
$|m(k)|=\pi$ for $k\ne0$ yields $\|\mathcal J_L\|=\pi$.
On compact subintervals of arcs shorter than $L/2$, the principal value reduces to the real-line Hilbert transform
plus an integrable remainder. Almost-everywhere convergence follows from \cite[Chapter~II]{Stein} and a finite covering.

For $q(s)=e^{2\pi ims/L}$,
$$
 [\mathcal J_L,M_q]e_k
 =i\pi\bigl(\operatorname{sgn}(k+m)-\operatorname{sgn}(k)\bigr)e_{k+m},
$$
which vanishes outside finitely many $k$. If trigonometric polynomials $q_j$ converge uniformly to $q$,
$$
 \|[\mathcal J_L,M_q]-[\mathcal J_L,M_{q_j}]\|
 \leq2\pi\|q-q_j\|_\infty\to0,
$$
so the commutator is compact.

Let $A=M_v\mathcal J_L$ and $V=\|v\|_\infty$. We have $\|A\|_{\ess}\leq\|A\|\leq\pi V$.
For $V>0$ and $0<\eta<V$, choose $\psi\in C^\infty(\mathbb T_L)$ of norm one, supported in an arc where $|v|>V-\eta$.
The functions $f_N(s)=\psi(s)e^{2\pi iNs/L}$ tend weakly to zero, since their pairings with $g\in L^2$ are Fourier
coefficients of $\psi\overline g\in L^1$. If $\psi=\sum_j\psi_je_j$, then
$$
 \|\mathcal J_Lf_N-i\pi f_N\|_2^2
 =\pi^2|\psi_{-N}|^2+4\pi^2\sum_{j<-N}|\psi_j|^2\to0.
$$
For every compact $C$, therefore,
$$
 \|A-C\|\geq\lim_{N\to\infty}\|(A-C)f_N\|_2
 =\pi\|v\psi\|_2\geq\pi(V-\eta).
$$
Taking the infimum over $C$ and letting $\eta\to0^+$ proves the lower bound. The case $V=0$ is trivial.
\end{proof}

For $s\ne t$ on the parameter circle, define
$$
 G(s,t)=\frac{\Phi(t)-\Phi(s)}{|\Phi(t)-\Phi(s)|^2},
 \qquad E(s,t)=G(s,t)-\tau(s)k_L(t-s).
$$

\begin{lemma}\label{lem:tangent-remainder}
There exists $r_0\in(0,L/4)$ such that, for $0<|h|<r_0$,
$$
 (1-\vartheta(|h|))|h|\leq|\Phi(s+h)-\Phi(s)|\leq|h|,
 \qquad \vartheta(r_0)\leq\tfrac12.
$$
The difference kernel satisfies
$$
 |E(s,t)|\leq C\left(\frac{\vartheta(\delta_L(s,t))}{\delta_L(s,t)}+\delta_L(s,t)\right),
 \qquad 0<\delta_L(s,t)<r_0,
$$
and is bounded on every set $\delta_L(s,t)\geq r>0$. The operator
$\mathcal Eg(s)=\int E(s,t)g(t)\,dt$ is compact from $L^2(\mathbb T_L)$ to $L^2(\mathbb T_L;\mathbb C^n)$,
and the integral converges absolutely almost everywhere for each $g\in L^2$.
\end{lemma}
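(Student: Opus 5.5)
The proof has three parts: the chord bounds from the definition of $\vartheta$, the kernel estimate for $E$ from the chord expansion, and compactness of $\mathcal E$ from the Dini condition.

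\emph{Chord bounds.} Write $\Phi(s+h)-\Phi(s)=\int_0^h\tau(s+u)\,du=h\tau(s)+\int_0^h(\tau(s+u)-\tau(s))\,du$. The second term has modulus at most $|h|\vartheta(|h|)$. This gives $|\Phi(s+h)-\Phi(s)|\leq|h|$ directly, and the lower bound $(1-\vartheta(|h|))|h|$ by the triangle inequality. The Dini condition forces $\vartheta(r)\to0$, so we can choose $r_0<L/4$ with $\vartheta(r_0)\leq 1/2$. For the claim that $E$ is bounded on $\delta_L\geq r$: since $\Phi$ is a continuous injective map of the compact circle, $|\Phi(t)-\Phi(s)|$ is bounded below there, so $G$ is bounded. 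Also $k_L(t-s)$ is bounded for $\delta_L(s,t)\geq r$.

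\emph{Kernel estimate.} Put $h=t-s$ (cyclically reduced, $|h|<r_0$) and $\Phi(t)-\Phi(s)=h\tau(s)+R$ with $|R|\leq|h|\vartheta(|h|)$. Then
\[
G-\frac{\tau(s)}{h}=\frac{h\tau(s)+R}{|h\tau(s)+R|^2}-\frac{h\tau(s)}{h^2}.
\]
Apply the mean value estimate for $w\mapsto w/|w|^2$, whose derivative is bounded by $C|w|^{-2}$, along the segment joining $h\tau(s)$ and $h\tau(s)+R$. Every point on it has modulus at least $|h|/2$, so the difference is at most $C|R|/h^2\leq C\vartheta(|h|)/|h|$. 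Adding $|\tau(s)|\,|k_L(h)-h^{-1}|=O_L(|h|)$ gives the stated bound.

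\emph{Compactness.} Split $E=E\one_{\{\delta_L<r\}}+E\one_{\{\delta_L\geq r\}}$ for $0<r<r_0$. The second piece is bounded on $\mathbb T_L\times\mathbb T_L$, so it defines a Hilbert--Schmidt, hence compact, operator. The first piece has a kernel dominated by the convolution kernel
\[
\kappa_r(h)=C\bigl(\vartheta(|h|)/|h|+|h|\bigr)\one_{\{|h|<r\}}.
\]
Its $L^1$ norm is $2C\int_0^r\vartheta(u)\,du/u+Cr^2$, which tends to $0$ as $r\to0^+$ by the Dini condition. By Young's inequality (or the Schur test), the first piece has operator norm at most $\|\kappa_r\|_1\to0$. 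Thus $\mathcal E$ is a norm limit of compact operators, hence compact.

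\emph{Absolute convergence.} For $g\in L^2\subset L^1$, the integral $\int|E(s,t)||g(t)|\,dt$ is bounded by $\kappa_r*|g|(s)$ plus a bounded term. Since $\kappa_r\in L^1$, the convolution $\kappa_r*|g|$ lies in $L^2$ and is finite almost everywhere, so the integral defining $\mathcal Eg(s)$ converges absolutely for almost every $s$.

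The only delicate point is the uniform lower bound on the segment in the mean value step. It is handled by $\vartheta(r_0)\leq1/2$, which gives $|h\tau(s)+uR|\geq|h|(1-u\vartheta(|h|))\geq|h|/2$ for $0\leq u\leq1$.
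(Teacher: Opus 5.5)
Your proposal is correct and follows essentially the same route as the paper. Both arguments use the chord expansion $\Phi(s+h)-\Phi(s)=h\tau(s)+O(|h|\vartheta(|h|))$ and a pointwise bound on $G-\tau(s)/h$; your mean value estimate for $w\mapsto w/|w|^2$ replaces the paper's explicit algebraic identity. Compactness then comes from the Hilbert--Schmidt truncation $\one_{\{\delta_L\geq r\}}E$, with the near-diagonal error controlled by the Dini integral (your convolution majorant and Young's inequality play the role of the paper's Schur test and Tonelli bound).
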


\begin{proof}
For $0<|h|<L/2$, using oriented integrals when $h<0$, define
$$
 e(s,h)=\frac1h\int_0^h(\tau(s+u)-\tau(s))\,du.
$$
Then $|e(s,h)|\leq\vartheta(|h|)$ and $\Phi(s+h)-\Phi(s)=h(\tau(s)+e(s,h))$.
Choose $r_0<L/4$ with $\vartheta(r_0)\leq1/2$. The chord bounds follow from arclength parametrization and
$|e(s,h)|\leq\vartheta(|h|)$. For $0<|h|<r_0$, abbreviate $e=e(s,h)$. Then
\begin{align*}
 \left|G(s,s+h)-\frac{\tau(s)}h\right|
 &=\frac{|e-(2\tau(s)\cdot e+|e|^2)\tau(s)|}{|h|\,|\tau(s)+e|^2}
 \leq\frac{3|e|+|e|^2}{|h|(1-|e|)^2}\leq C\frac{\vartheta(|h|)}{|h|}.
\end{align*}
The expansion $k_L(h)=h^{-1}+O_L(|h|)$ proves the bound for $E$.
For $r>0$, injectivity and compactness imply
$$
 \inf_{\delta_L(s,t)\geq r}|\Phi(t)-\Phi(s)|>0,
$$
so $E$ is bounded off the parameter diagonal.

Let $\mathcal E_r$ have kernel $\one_{\{\delta_L(s,t)\geq r\}}E(s,t)$. It is Hilbert--Schmidt, and the Schur test yields
\begin{align}\label{eq:tangent-remainder-rate}
 \|\mathcal E-\mathcal E_r\|
 &\leq\bigg(\sup_s\int_{\delta_L(s,t)<r}|E(s,t)|\,dt\,
                  \sup_t\int_{\delta_L(s,t)<r}|E(s,t)|\,ds\bigg)^{1/2}\notag\\
 &\leq C\int_0^r\left(\frac{\vartheta(u)}u+u\right)du\to0.
\end{align}
Thus $\mathcal E$ is compact. Since $L^2(\mathbb T_L)\subset L^1(\mathbb T_L)$, Tonelli also yields
$$
 \int_{\mathbb T_L}\int_{\mathbb T_L}|E(s,t)|\,|g(t)|\,dt\,ds
 \leq\bigg(\sup_t\int_{\mathbb T_L}|E(s,t)|\,ds\bigg)\|g\|_1<\infty.
$$
The proof is complete.
\end{proof}

Define $S=M_\tau\mathcal J_L+\mathcal E$. The arclength truncations of $G$ satisfy
$S^{\mathrm{arc}}_\varepsilon=M_\tau\mathcal J_{L,\varepsilon}+\mathcal E_\varepsilon$, so
\cref{lem:tangent-hilbert} and \eqref{eq:tangent-remainder-rate} imply $S^{\mathrm{arc}}_\varepsilon g\to Sg$ in $L^2$.
The same limit holds almost everywhere by the Hilbert principal-value theorem and absolute convergence of the remainder.

\subsection{Euclidean and smooth truncations}

Let $S^\sharp_\varepsilon$ be the truncation of $G$ to $|\Phi(t)-\Phi(s)|>\varepsilon$.
We compare it with $S^{\mathrm{arc}}_\varepsilon$, truncated to $\delta_L(s,t)>\varepsilon$.

\begin{lemma}\label{lem:tangent-truncations}
The difference $S^\sharp_\varepsilon-S^{\mathrm{arc}}_\varepsilon$ tends to zero in operator norm and, on each $g\in
L^2$, pointwise almost everywhere.  The operators $S^\sharp_\varepsilon$, $\varepsilon>0$, are uniformly bounded.
The operators $U_\omega T_{\varepsilon,R,\sigma}U_\sigma^{-1}$ are uniformly bounded in both parameters and
converge strongly to $cM_{\sqrt b}SM_{\sqrt a}$.
\end{lemma}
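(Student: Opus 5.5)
The plan is to compare the two truncation regions in parameter space and show that their symmetric difference is a thin shell on which $G$ is bounded by $C/\delta_L$, so the difference operator is small by a Schur test. By \cref{lem:tangent-remainder}, for $\delta_L(s,t)<r_0$ we have $(1-\vartheta(\delta))\delta\leq|\Phi(t)-\Phi(s)|\leq\delta$, where $\delta=\delta_L(s,t)$. Hence for $\varepsilon<r_0/2$ the kernel of $S^\sharp_\varepsilon-S^{\mathrm{arc}}_\varepsilon$ is, up to sign, $G(s,t)$ restricted to the set
\[
W_\varepsilon=\{\varepsilon<\delta\leq\varepsilon/(1-\vartheta(2\varepsilon))\}.
\]
Writing $G=\tau(s)k_L(t-s)+E$, the $E$ part is handled by \eqref{eq:tangent-remainder-rate}, since $W_\varepsilon\subset\{\delta<2\varepsilon\}$. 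For the main part, $|k_L(h)|\leq C/|h|$, so the Schur integrals are bounded by
\[
2\int_\varepsilon^{\varepsilon/(1-\vartheta(2\varepsilon))}\frac{du}{u}=-2\log\bigl(1-\vartheta(2\varepsilon)\bigr)\to0.
\]
This gives operator-norm convergence of the difference to zero. Pointwise a.e. convergence follows from the same bound applied to $|g|$ through the maximal-type estimate
\[
\int_{W_\varepsilon}\frac{|g(t)|}{\delta}\,dt\leq C\vartheta(2\varepsilon)\,\mathcal M g(s),
\]
where $\mathcal M$ is the Hardy--Littlewood maximal function, which is finite a.e. Uniform boundedness of $S^\sharp_\varepsilon$ then follows: $S^{\mathrm{arc}}_\varepsilon$ is uniformly bounded by \cref{lem:tangent-hilbert} and \eqref{eq:tangent-remainder-rate}. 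For $\varepsilon\geq r_0/2$ the kernel is bounded, hence the operator is Hilbert--Schmidt with a uniform bound.

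For the smooth truncations, I first conjugate. The operator $U_\omega T_{\varepsilon,R,\sigma}U_\sigma^{-1}$ has kernel
\[
c\sqrt{b(s)}\,\chi_{\varepsilon,R}(\Phi(s)-\Phi(t))\,G(s,t)\,\sqrt{a(t)},
\]
because $d\sigma=a\,ds$ and the sign convention matches $\bfK^{n-1,n}$ with $d=1$. For $R>\diam\Gamma$ the outer cutoff equals one, so only $\varepsilon$ matters. Since $M_{\sqrt a}$ and $M_{\sqrt b}$ are bounded, it suffices to show that the kernel $\chi_{\varepsilon}(\Phi(s)-\Phi(t))G(s,t)$ converges strongly to $S$, where $\chi_\varepsilon$ is the inner cutoff alone. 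Write
\[
\eta_0(r/\varepsilon)=\int_0^1\one_{\{r>\varepsilon\lambda\}}\,d\nu(\lambda),
\]
where $\nu$ is the probability measure $d\eta_0(\lambda)$ supported on $[1/2,1]$; this works because $\eta_0$ is increasing, which I may assume after a harmless modification. If I prefer not to assume monotonicity, I use $|\eta_0'|$ and a signed measure of bounded total variation instead. This expresses the smooth truncation as an average
\[
\int S^\sharp_{\lambda\varepsilon}\,d\nu(\lambda).
\]
Uniform boundedness is then immediate from that of $S^\sharp$. Strong convergence follows from strong convergence of $S^\sharp_{\lambda\varepsilon}\to S$, which combines the first part with $S^{\mathrm{arc}}_\varepsilon\to S$, together with dominated convergence in $\lambda$.

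The main obstacle is the shell estimate. In particular, the chord comparison must hold uniformly in $s$ near the diagonal, and the cyclic distance must be handled for the closed curve. Both are supplied by \cref{lem:tangent-remainder} and the choice $\varepsilon<r_0/2$, which keeps the shell away from the antipodal region.
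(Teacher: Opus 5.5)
Your shell estimate for $\|S^\sharp_\varepsilon-S^{\mathrm{arc}}_\varepsilon\|$ and your representation of the smooth cutoff as an $\eta_0'$-average of Euclidean truncations follow essentially the paper's route. The pointwise almost-everywhere step, however, rests on a false inequality. The bound $\int_{W_\varepsilon}|g(t)|\,\delta^{-1}\,dt\leq C\vartheta(2\varepsilon)\,\mathcal Mg(s)$ cannot hold. The maximal function only controls averages over intervals containing $s$. But $W_\varepsilon(s)$ consists of two arcs of length about $\varepsilon\vartheta(2\varepsilon)$ at distance $\varepsilon$ from $s$, so every interval through $s$ that meets $W_\varepsilon(s)$ has length at least $\varepsilon$, and the thinness of the shell is lost. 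For example, $g=\one_{W_\varepsilon(s)}$ gives a left side of at least $2\vartheta(2\varepsilon)$, whereas $\mathcal Mg(s)$ is comparable to $\vartheta(2\varepsilon)$. What the maximal function actually gives is $\sup_{\varepsilon}\int_{W_\varepsilon}|g|\,\delta^{-1}\,dt\leq C\mathcal Mg(s)$, a uniform bound with no decay. To get convergence, you have two options. One is to split $g(t)=g(s)+(g(t)-g(s))$ at a Lebesgue point $s$ of $g$, which bounds the difference by
\[
 C|g(s)|\log\frac1{1-\vartheta(2\varepsilon)}
 +\frac C\varepsilon\int_{|h|<2\varepsilon}|g(s+h)-g(s)|\,dh\to0.
\]
The other is to combine the correct maximal bound with the trivial convergence for continuous $g$ and the weak type $(1,1)$ inequality for $\mathcal M$.

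Two smaller points also need fixing. First, the choice $\varepsilon<r_0/2$ does not by itself put the difference set $\{t:\delta_L(s,t)>\varepsilon\geq|\Phi(t)-\Phi(s)|\}$ inside $W_\varepsilon$, because a pair with $\delta_L(s,t)\geq r_0$ and small chord is not excluded. You also need $\varepsilon<\inf_{\delta_L(s,t)\geq r_0}|\Phi(t)-\Phi(s)|$, which is positive by injectivity and compactness. For all larger $\varepsilon$ the Euclidean kernel is bounded by $1/\varepsilon$, which gives uniform boundedness there. Second, uniform boundedness is required for all $0<\varepsilon<R$, including $R<\diam\Gamma$, so the outer factor cannot be dropped. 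Since one of the two factors of $\chi_{\varepsilon,R}$ always equals one, $\eta_0(\rho/\varepsilon)\eta_0(R/\rho)=\eta_0(\rho/\varepsilon)+\eta_0(R/\rho)-1$. Moreover, $1-\eta_0(R/\rho)=\int_{1/2}^1\eta_0'(u)\one_{\{\rho>R/u\}}\,du$ is again an average of Euclidean truncations. Finally, $\eta_0$ is fixed by the definition of the truncations, so use your signed-measure version rather than a monotone modification.
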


\begin{proof}
Choose
$$
 0<\varepsilon_0<\min\left\{\frac{r_0}{2},
                \inf_{\delta_L(s,t)\geq r_0}|\Phi(t)-\Phi(s)|\right\}.
$$
For $\varepsilon<\varepsilon_0$, the difference set
$A_\varepsilon(s)=\{t:\delta_L(s,t)>\varepsilon\geq|\Phi(t)-\Phi(s)|\}$ satisfies
$$
 \varepsilon<\delta_L(s,t)<r_0,\qquad
 (1-\vartheta(\delta_L(s,t)))\delta_L(s,t)\leq\varepsilon.
$$
Since $\vartheta(r_0)\leq1/2$, these inequalities imply $\delta_L(s,t)\leq2\varepsilon$ and hence
$$
 A_\varepsilon(s)\subset
 \left\{t:\varepsilon<\delta_L(s,t)\leq\frac{\varepsilon}{1-\vartheta(2\varepsilon)}\right\}.
$$
Since $|G(s,t)|\leq C/\delta_L(s,t)$ there, the two Schur bounds imply
$$
 \|S^\sharp_\varepsilon-S^{\mathrm{arc}}_\varepsilon\|
 \leq C\int_\varepsilon^{\varepsilon/(1-\vartheta(2\varepsilon))}\frac{du}{u}
 =C\log\frac1{1-\vartheta(2\varepsilon)}\to0.
$$
For every Lebesgue point $s$ of $g$,
\begin{align*}
 |(S^\sharp_\varepsilon-S^{\mathrm{arc}}_\varepsilon)g(s)|
 &=\left|\int_{A_\varepsilon(s)}G(s,t)g(t)\,dt\right|\\
 &\leq C|g(s)|\log\frac1{1-\vartheta(2\varepsilon)}
     +\frac C\varepsilon\int_{|h|<2\varepsilon}|g(s+h)-g(s)|\,dh\to0.
\end{align*}
For $\varepsilon\geq\varepsilon_0$, $|G|\one_{\{|\Phi(t)-\Phi(s)|>\varepsilon\}}\leq1/\varepsilon_0$.
For $0<\varepsilon<\varepsilon_0$, the arclength truncations are uniformly bounded by
\cref{lem:tangent-hilbert,lem:tangent-remainder}. The estimate above then gives a uniform bound for $S^\sharp_\varepsilon$.
Thus $\sup_{\varepsilon>0}\|S^\sharp_\varepsilon\|<\infty$.
The norm comparison with $S^{\mathrm{arc}}_\varepsilon$ proves $S^\sharp_\varepsilon g\to Sg$ in $L^2$.

For $\rho>0$ and $\varepsilon<R$, at least one cutoff factor equals one. By the fundamental theorem of calculus,
\begin{align*}
 \eta_0(\rho/\varepsilon)\eta_0(R/\rho)
 &=\eta_0(\rho/\varepsilon)+\eta_0(R/\rho)-1
 =\int_{1/2}^1\eta_0'(u)
       \bigl(\one_{\{\rho>\varepsilon u\}}-\one_{\{\rho>R/u\}}\bigr)\,du.
\end{align*}
For fixed $0<\varepsilon<R$, the kernels of $T^\sharp_{\varepsilon u}$ and $T^\sharp_{R/u}$ are jointly measurable
and uniformly bounded for $1/2\leq u\leq1$. Since $\sigma$ and $\omega$ are finite, the two families are strongly
measurable in the Hilbert--Schmidt norm. Integrating against the kernel and applying Fubini to bounded functions, we obtain
$$
 T_{\varepsilon,R,\sigma}
 =\int_{1/2}^1\eta_0'(u)\bigl(T^\sharp_{\varepsilon u}-T^\sharp_{R/u}\bigr)\,du.
$$
Since $U_\omega T^\sharp_\rho U_\sigma^{-1}=cM_{\sqrt b}S^\sharp_\rho M_{\sqrt a}$, the integrals extend strongly to $L^2$ and
$$
 \sup_{0<\varepsilon<R}\|T_{\varepsilon,R,\sigma}\|
 \leq2\bigg(\int_{1/2}^1|\eta_0'(u)|\,du\bigg)\sup_{\rho>0}\|T^\sharp_\rho\|<\infty.
$$
For $R\geq\diam\Gamma$, the outer term vanishes. Since $\int_{1/2}^1\eta_0'=1$, for $g\in L^2$ we have
\begin{align*}
 &\|U_\omega T_{\varepsilon,R,\sigma}U_\sigma^{-1}g-cM_{\sqrt b}SM_{\sqrt a}g\|_2\leq c\|\sqrt b\|_\infty\int_{1/2}^1|\eta_0'(u)|\,
             \|(S^\sharp_{\varepsilon u}-S)(\sqrt a\,g)\|_2\,du\to0
\end{align*}
by dominated convergence. Thus $U_\omega TU_\sigma^{-1}=cM_{\sqrt b}SM_{\sqrt a}$.
\end{proof}

\begin{proof}[Proof of \cref{thm:tangent-exact}]
By \cref{lem:tangent-truncations},
\begin{align}\label{eq:tangent-decomposition}
 U_\omega TU_\sigma^{-1}
 &=cM_{\sqrt b}(M_\tau\mathcal J_L+\mathcal E)M_{\sqrt a}\notag\\
 &=cM_{\sqrt{ab}\,\tau}\mathcal J_L
     +cM_{\sqrt b\,\tau}[\mathcal J_L,M_{\sqrt a}]+cM_{\sqrt b}\mathcal E M_{\sqrt a}\notag\\
 &=cM_{\sqrt{ab}\,\tau}\mathcal J_L+K.
\end{align}
By \cref{lem:tangent-remainder}, $\mathcal E$ is compact. 
The finite-rank approximation in \cref{lem:tangent-hilbert} proves compactness of
$[\mathcal J_L,M_{\sqrt a}]$. Thus $K:L^2(\mathbb T_L)\to L^2(\mathbb T_L;\mathbb C^n)$ is compact.
By the same lemma and $|\tau|=1$,
$$
 \|T\|_{\ess}=c\|M_{\sqrt{ab}\,\tau}\mathcal J_L\|_{\ess}
 =\pi c\|\sqrt{ab}\,\tau\|_\infty=\pi c\max_{\mathbb T_L}\sqrt{ab}.
$$
For the pointwise assertion, $g(t)=a(t)f(\Phi(t))$ satisfies
$\|g\|_2^2\leq M\|f\|_{L^2(\sigma)}^2$. Its arclength and Euclidean principal values agree by
\cref{lem:tangent-truncations}, so this principal value represents $T$.
\end{proof}

\subsection{Tangent models and projections}

At $s_0\in\mathbb T_L$, differentiability of $\Phi$ and continuity of $a,b$ imply
$$
 c\sqrt{b(s_0+ru)}\,rG(s_0+ru,s_0+rv)\sqrt{a(s_0+rv)}
 \to c\sqrt{a(s_0)b(s_0)}\frac{\tau(s_0)}{v-u},
$$
locally uniformly away from $u=v$, as $r\to0^+$. The corresponding line operator is
$$
 T^{\mathrm{tan}}_{s_0}=c\sqrt{a(s_0)b(s_0)}\,\tau(s_0)\mathcal J_{\R},
 \qquad \mathcal J_{\R}g(u)=\operatorname{p.v.}\int_{\R}\frac{g(v)}{v-u}\,dv.
$$
With $\widehat g(\xi)=\int e^{-iu\xi}g(u)\,du$, the multiplier of $\mathcal J_{\R}$ is
$i\pi\operatorname{sgn}\xi$. Thus $\|T^{\mathrm{tan}}_{s_0}\|=\pi c\sqrt{a(s_0)b(s_0)}$,
and \cref{thm:geometric,thm:tangent-exact} imply the following identity.

\begin{corollary}\label{cor:tangent-local}
For the present pair,
$
 m_T(0)=\|T\|_{\ess}
 =\max_{s_0\in\mathbb T_L}\|T^{\mathrm{tan}}_{s_0}\|
 =\pi c\max_{\mathbb T_L}\sqrt{ab}.
$
\end{corollary}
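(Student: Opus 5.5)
The corollary chains together three equalities. Two of them are already available, and the third is a one-line computation.

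First, \cref{thm:geometric} applies because $\sigma=a\,ds_\Gamma$ has compact support $\Gamma$. It gives $m_T(0)=\|T\|_{\ess}$ directly. Here $T$ is the weak limit of the smooth truncations. By \cref{lem:tangent-truncations} and the proof of \cref{thm:tangent-exact}, this is the same operator as the strong limit of $T^\sharp_\varepsilon$, so there is no ambiguity about which $T$ is meant. The uniqueness in \cref{prop:unique} settles this, since $\mathfrak N_s(\sigma,\omega)<\infty$ by \cref{lem:tangent-truncations}. Second, \cref{thm:tangent-exact} gives $\|T\|_{\ess}=\pi c\max_{\mathbb T_L}\sqrt{ab}$.

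It remains to identify $\max_{s_0}\|T^{\mathrm{tan}}_{s_0}\|$. Since $|\tau(s_0)|=1$, the vector operator $T^{\mathrm{tan}}_{s_0}g=c\sqrt{a(s_0)b(s_0)}\,\tau(s_0)\mathcal J_{\R}g$ satisfies $\|T^{\mathrm{tan}}_{s_0}g\|_2=c\sqrt{a(s_0)b(s_0)}\,\|\mathcal J_{\R}g\|_2$ pointwise in norm. Plancherel applies with the stated normalization of $\widehat g$, and the multiplier $i\pi\operatorname{sgn}\xi$ has modulus $\pi$ almost everywhere. Hence $\|\mathcal J_{\R}\|=\pi$, and so $\|T^{\mathrm{tan}}_{s_0}\|=\pi c\sqrt{a(s_0)b(s_0)}$. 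The function $s_0\mapsto\sqrt{a(s_0)b(s_0)}$ is continuous on the compact circle $\mathbb T_L$, so its maximum is attained. Taking the maximum over $s_0$ gives the last equality.

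The only point needing care is the multiplier of $\mathcal J_{\R}$. It is the standard one: $\operatorname{p.v.}1/(v-u)$ is $-\operatorname{p.v.}1/x$ convolved, and its Fourier transform is $i\pi\operatorname{sgn}\xi$ with the paper's convention. This matches the periodic computation in \cref{lem:tangent-hilbert}, and one can also see it as the $L\to\infty$ limit of that lemma. Only its modulus matters, so no obstacle arises. The blow-up convergence statement preceding the corollary serves as motivation and is not needed for the equalities.
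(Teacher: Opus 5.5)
Your proposal is correct and follows the paper's argument: \cref{thm:geometric} gives $m_T(0)=\|T\|_{\ess}$, \cref{thm:tangent-exact} gives $\|T\|_{\ess}=\pi c\max_{\mathbb T_L}\sqrt{ab}$, and the multiplier $i\pi\operatorname{sgn}\xi$ of $\mathcal J_{\R}$ together with $|\tau(s_0)|=1$ gives $\|T^{\mathrm{tan}}_{s_0}\|=\pi c\sqrt{a(s_0)b(s_0)}$. Your additional remark, that \cref{prop:unique} with the uniform bound from \cref{lem:tangent-truncations} fixes which $T$ is meant, is correct and does not change the argument.
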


For constant densities, the essential norm is independent of the length and shape of the closed curve.
The uniform bound for the smooth truncations and \cref{prop:bounded-smooth} imply $\Theta(\sigma,\omega)<\infty$. Hence
\cref{thm:main} yields $\mathfrak V_0\approx\pi c\max_{\mathbb T_L}\sqrt{ab}>0$.

\begin{corollary}\label{cor:tangent-projections}
Let $q\in C(\mathbb T_L;\R^n)$, and define $T_q$ by $T_qf(\Phi(s))=q(s)\cdot Tf(\Phi(s))$.  Then
$$
 \|T_q\|_{\ess}
   =\pi c\max_{s\in\mathbb T_L}\sqrt{a(s)b(s)}\,|q(s)\cdot\tau(s)|.
$$
In particular, every continuous normal component, for which $q\cdot\tau=0$, is compact.  The tangential projection
$q=\tau$ has the same essential norm as the full vector.
\end{corollary}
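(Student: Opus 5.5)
The plan is to conjugate by the unitaries $U_\sigma,U_\omega$ and reuse the decomposition \eqref{eq:tangent-decomposition}. Since $q$ is continuous and real, multiplication by $q$ commutes with $U_\omega$, so
$$
 U_\omega T_qU_\sigma^{-1}=M_{q^{\top}}\bigl(cM_{\sqrt{ab}\,\tau}\mathcal J_L+K\bigr)
 =cM_{\sqrt{ab}\,(q\cdot\tau)}\mathcal J_L+M_{q^{\top}}K .
$$
Here $M_{q^{\top}}$ is the bounded map $F\mapsto q\cdot F$ from $L^2(\mathbb T_L;\mathbb C^n)$ to $L^2(\mathbb T_L)$. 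Its norm is at most $\|q\|_\infty$. The second term is therefore compact, because $K$ is compact by the proof of \cref{thm:tangent-exact}.

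Since unitary conjugation preserves essential norms and compact perturbations do not change them, it follows that $\|T_q\|_{\ess}=c\|M_v\mathcal J_L\|_{\ess}$, where $v=\sqrt{ab}\,(q\cdot\tau)\in C(\mathbb T_L;\mathbb C)$. Applying \cref{lem:tangent-hilbert} with $\ell=1$ gives $\|M_v\mathcal J_L\|_{\ess}=\pi\|v\|_\infty=\pi\max_s\sqrt{a(s)b(s)}\,|q(s)\cdot\tau(s)|$. The maximum is attained because $\mathbb T_L$ is compact. This proves the formula.

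The two special cases are then immediate. If $q\cdot\tau\equiv0$, the essential norm vanishes and $T_q$ is compact. If $q=\tau$, then $|q\cdot\tau|=1$, and the value agrees with \eqref{eq:tangent-exact}. For $q=\tau$ I would note that $\tau$ is continuous because $\Phi\in C^1$.

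The only step needing care is the identification $T_qf(\Phi(s))=q(s)\cdot Tf(\Phi(s))$ as an operator from $L^2(\sigma)$ to $L^2(\omega)$. I plan to handle it by checking that it equals $U_\omega^{-1}M_{q^{\top}}U_\omega T$, where the scalar $U_\omega$ on $L^2(\omega)$ is understood. This follows directly from the definition $U_\omega F(s)=\sqrt{b(s)}F(\Phi(s))$, and I expect no real obstacle there.
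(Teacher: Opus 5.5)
Your proposal is correct and follows the paper's proof: multiply the decomposition \eqref{eq:tangent-decomposition} by $F\mapsto q\cdot F$, note that the remainder stays compact, and apply \cref{lem:tangent-hilbert} with $\ell=1$ to $v=\sqrt{ab}\,(q\cdot\tau)$. Your explicit treatment of the unitary conjugation and of the bound $\|M_{q^\top}\|\leq\|q\|_\infty$ fills in details that the paper leaves implicit.
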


\begin{proof}
Multiplying \eqref{eq:tangent-decomposition} by $F(s)\mapsto q(s)\cdot F(s)$ leaves a compact remainder.
Applying \cref{lem:tangent-hilbert} with $\ell=1$, we obtain
$$
 \|T_q\|_{\ess}
 =c\|M_{\sqrt{ab}(q\cdot\tau)}\mathcal J_L\|_{\ess}
 =\pi c\max_{\mathbb T_L}\sqrt{ab}\,|q\cdot\tau|.
$$
The proof is complete.
\end{proof}

For $n=2$, take $q=\nu\circ\Phi$ for a continuous unit normal $\nu$ to $\Gamma$, and denote the projection by $T_\nu$.
The classical double-layer boundary operator is
$$
 Df(x)=\frac1{2\pi}\operatorname{p.v.}\int_\Gamma
       \frac{\nu(y)\cdot(y-x)}{|x-y|^2}f(y)\,ds_\Gamma(y).
$$
With this normalization, $T_\nu f=-2\pi cD^*(af)$, where $D^*$ is the adjoint on $L^2(ds_\Gamma)$.
The normal component is therefore a weighted adjoint double-layer operator. Its compactness on $C^1$ boundaries,
as in \cite[Section~3D]{HMT}, is preserved by the bounded density multipliers.

\begin{corollary}\label{cor:tangent-orders}
For the closed curve and positive continuous densities above, let $0\leq\alpha<n$. Then:
\begin{enumerate}
\item If $\alpha>n-1$, the integral operator is compact and the smooth truncations converge in operator norm.
\item If $\alpha=n-1$, the operator is bounded and noncompact, with essential norm given by \eqref{eq:tangent-exact}.
\item If $\alpha<n-1$, then $\mathfrak N_s(\sigma,\omega)=\infty$. There is no bounded operator agreeing with the
kernel on separated supports.
\end{enumerate}
\end{corollary}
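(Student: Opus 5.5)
We treat the three cases through the parametrized picture of Section~\ref{sec:tangent}. Conjugating by the unitaries $U_\sigma,U_\omega$, the operator becomes $c_{\alpha,n}M_{\sqrt b}\,S_\alpha M_{\sqrt a}$ on $L^2(\mathbb T_L)$, where $S_\alpha$ has kernel $G_\alpha(s,t)=(\Phi(t)-\Phi(s))|\Phi(t)-\Phi(s)|^{-d-1}$ and $d=n-\alpha$. By \cref{lem:tangent-remainder}, for $\delta_L(s,t)<r_0$ we have $\tfrac12\delta_L(s,t)\leq|\Phi(t)-\Phi(s)|\leq\delta_L(s,t)$. Away from the diagonal the chord is bounded below by injectivity and compactness. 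Hence $|G_\alpha(s,t)|\approx\delta_L(s,t)^{-d}$ near the diagonal, and $G_\alpha$ is bounded off it.

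For (1), $d<1$, so $\delta_L^{-d}$ is integrable. The Schur test bounds the part of the kernel with $\delta_L<r$ by $C\int_0^r u^{-d}\,du\to0$, while the remainder is Hilbert--Schmidt. This is the segment argument of Section~\ref{sec:examples}. Hence the ordinary integral operator is compact. The smooth truncation error is supported where $\delta_L\lesssim\varepsilon$ (for $R\geq\diam\Gamma$ the outer cutoff equals one) and has the same dominating kernel, so the Schur bound gives norm convergence. This also identifies the ordinary integral with the weak limit $T$ of \cref{prop:unique}. Alternatively, \cref{prop:truncation-essential} applies once compactness is known.

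Case (2) is exactly \cref{thm:tangent-exact}. Since the essential norm $\pi c\max\sqrt{ab}$ is positive, the operator is bounded and noncompact.

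For (3), $d>1$. The cleanest route is necessity through the punctured Muckenhoupt condition, via \cref{prop:bounded-smooth}. There are no atoms, so for a small axis-parallel cube $Q$ centered on $\Gamma$ with side $\ell$ we get
$$
\mathfrak M_2^\alpha(\sigma,\omega)\geq\frac{\sigma(Q)\omega(Q)}{\ell^{2d}}\gtrsim m^2\ell^{2-2d}\to\infty .
$$
Hence $\Theta(\sigma,\omega)=\infty$, and \eqref{eq:bounded-smooth} gives $\mathfrak N_s=\infty$. The inequality $\Theta\lesssim\mathfrak N_s$ used here was noted to hold for arbitrary compactly supported $\sigma$.

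For the last assertion, suppose a bounded $X$ agrees with the kernel on separated supports. Take $f=\one_I$ and $g=\one_J$ with $I,J$ adjacent arcs of length $\ell$ separated by a gap $\ell/N$, and test against the tangential component $\tau(s_0)$. By the Dini tangent approximation, the pairing is comparable to
$$
\int_I\int_J |s-t|^{-d}\,ds\,dt\gtrsim \ell^{2-d}N^{d-2}\quad(d>2),
$$
or to $\ell^{2-d}\log N$ when $d=2$. For $1<d<2$ it is already $\gtrsim\ell^{2-d}$. The kernel has a fixed sign in the tangential component for $t>s$. Boundedness of $X$ requires the pairing to be at most $\|X\|\,\ell$, so $\ell^{2-d}\lesssim\|X\|\ell$, that is, $\ell^{1-d}\lesssim\|X\|$. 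Letting $\ell\to0$ gives a contradiction because $d>1$.

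The main obstacle is this sign and lower-bound step: one must control the tangent deviation $E_s(t)$ by $\vartheta$, using the argument from the necessity proof of \cref{thm:compact-distance-powers}, uniformly as $\ell\to0$.
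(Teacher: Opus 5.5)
Your proposal is correct. Parts (1) and (2) follow the paper: a Schur bound on the strip $\delta_L<r$ with a Hilbert--Schmidt remainder, a truncation error supported in $\delta_L<2\varepsilon$, and \cref{thm:tangent-exact} for $d=1$.

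In (3) you obtain $\mathfrak N_s(\sigma,\omega)=\infty$ differently. The paper runs one elementary test. Take $e=\tau(0)$, $E_r=\Phi([r,2r])$ and $F_r=\Phi([-2r,-r])$. Continuity of $\tau$ gives $e\cdot\tau\geq1/2$ on $[-2r,2r]$, hence $e\cdot(\Phi(t)-\Phi(s))\geq(t-s)/2$ and $e\cdot\bfK^{\alpha,n}\geq c_{\alpha,n}/(2(4r)^d)$ on $F_r\times E_r$. The cutoff equals one on this separated product when $\varepsilon<r$ and $R>4r$. So the single bound $\|\one_{F_r}T_{\varepsilon,R,\sigma}f_r\|\gtrsim mr^{1-d}$ gives both $\mathfrak N_s=\infty$ and the nonexistence of a bounded realization.

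You instead deduce $\mathfrak N_s=\infty$ from the blow-up $A_2^\alpha(\sigma,\omega)\geq m^2\ell^{2-2d}$ and the necessity half of \cref{prop:bounded-smooth}. This is legitimate, and it shows the obstruction is already a pure size condition seen by the punctured Muckenhoupt constant. However, it rests on the external necessity theorem of \cite{CurvePaper}, whereas the paper's argument is self-contained. It is also redundant: your own pairing test applies verbatim to $T_{\varepsilon,R,\sigma}$ once $\varepsilon$ is below the chord gap between the two arcs and $R\geq\diam\Gamma$.

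Two simplifications are available. The gap parameter $N$ plays no role, since a gap comparable to $\ell$ already gives a pairing $\gtrsim\ell^{2-d}$. The step you flag as the main obstacle needs no Dini control of the tangent deviation. The identity $e\cdot(\Phi(t)-\Phi(s))=\int_s^t e\cdot\tau(u)\,du$ and continuity of $\tau$ give the fixed sign and the lower bound uniformly as $\ell\to0$.
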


\begin{proof}
Let $d=n-\alpha$. By \cref{lem:tangent-remainder}, chord and arclength distances are comparable near the diagonal,
and the kernel is bounded on every set separated from the diagonal. If $0<d<1$, the Schur bound for the strip $\delta_L(s,t)<r$ is at most
$$
 C\int_0^r u^{-d}\,du=\frac{Cr^{1-d}}{1-d}\to0.
$$
On the complementary region the kernel is bounded and both measures are finite, so the operator is Hilbert--Schmidt.
The strip estimate therefore proves compactness of the integral operator.
For $R\geq\diam\Gamma$ and small $\varepsilon$, its difference from $T_{\varepsilon,R,\sigma}$ is supported where
$|\Phi(t)-\Phi(s)|<\varepsilon$, hence $\delta_L(s,t)<2\varepsilon$, and has norm at most $C\varepsilon^{1-d}$.
The case $d=1$ is \cref{thm:tangent-exact}.

Suppose $d>1$. Choose a parameter point represented by $0$ and let $e=\tau(0)$.
For small $r$, let $E_r=\Phi([r,2r])$ and $F_r=\Phi([-2r,-r])$. Then $e\cdot\tau(u)\geq1/2$ for $|u|\leq2r$.
For $t\in[r,2r]$, $s\in[-2r,-r]$,
$$
 r\leq\tfrac12(t-s)\leq e\cdot(\Phi(t)-\Phi(s))
 \leq|\Phi(t)-\Phi(s)|\leq t-s\leq4r.
$$
The tangential component of the kernel therefore satisfies
$$
 e\cdot\bfK^{\alpha,n}(\Phi(s),\Phi(t))
 \geq\frac{c_{\alpha,n}}{2(t-s)^d}\geq\frac{c_{\alpha,n}}{2(4r)^d}.
$$
Take $f_r=\one_{E_r}/\sqrt{\sigma(E_r)}$, $\varepsilon<r$ and $R>4r$. The cutoff is one on $F_r\times E_r$.
Since $\sigma(E_r),\omega(F_r)\geq mr$,
$$
 \|\one_{F_r}T_{\varepsilon,R,\sigma}f_r\|_{L^2(\omega)}
 \geq Cr^{-d}\sqrt{\sigma(E_r)\omega(F_r)}\geq C'mr^{1-d}\to\infty.
$$
Any operator agreeing with the kernel on separated supports satisfies the same lower bound and is therefore unbounded.
\end{proof}

\section*{Statements and Declarations}

\noindent\textbf{Funding.}
Ji Li is supported by the Australian Research Council through grant DP260100485.  Chun-Yen Shen is supported in part
by the National Science and Technology Council of Taiwan through grant 111-2115-M-002-010-MY5. Brett D. Wick is
partially supported by the National Science Foundation under grant DMS-2349868 and by the Australian Research
Council under grant DP260201083.

\medskip
\noindent\textbf{Data availability.}
No datasets were generated or analysed during this study.

\medskip
\noindent\textbf{Use of AI tools.}
During the preparation of the manuscript, OpenAI’s ChatGPT was used to assist with
proof auditing and language refinement. All mathematical arguments and references were
independently verified by the authors, who take full responsibility for the content of the
paper.

\end{document}